\newcommand{\Z}{{\mathbb Z}}
\newcommand{\C}{{\mathbb C}}
\newcommand{\Q}{{\mathbb Q}}
\newcommand{\R}{{\mathbb R}}
\newcommand{\G}{{\mathbb G}}
\newcommand{\ba}{\mbox{\boldmath{$\alpha$}}}
\newcommand{\bm}{\mbox{\boldmath{$\mu$}}}
\newtheorem{thm}{Theorem}[section]
\newtheorem{lemma}[thm]{Lemma}
\newtheorem{prop}[thm]{Proposition}
\newtheorem{cor}[thm]{Corollary}
\begin{document}
\title[Representations of Chevalley groups]{On linear representations of Chevalley groups over commutative rings}

\thanks{2000 {\it Mathematics Subject Classification.} Primary 20G35, Secondary 20G05.}

\begin{abstract}
Let $G$ be the universal Chevalley-Demazure group scheme corresponding to a reduced irreducible root system of rank $\geq 2$, and let $R$ be a commutative ring. We analyze the linear representations $\rho \colon G(R)^+ \to GL_n (K)$ over an algebraically closed field $K$ of the elementary subgroup $G(R)^+ \subset G(R).$ Our main result is that under certain conditions, any such representation has a standard description, i.e. there exists a commutative finite-dimensional $K$-algebra $B$, a ring homomorphism $f \colon R \to B$ with Zariski-dense image, and a morphism of algebraic groups $\sigma \colon G(B) \to GL_n (K)$ such that $\rho$ coincides with $\sigma \circ F$ on a suitable finite index subgroup $\Gamma \subset G(R)^+,$ where $F \colon G(R)^+ \to G(B)^+$ is the group homomorphism induced by $f.$ In particular, this confirms a conjecture of Borel and Tits \cite{BT} for Chevalley groups over a field of characteristic zero.
\end{abstract}

\author[I.A.~Rapinchuk]{Igor A. Rapinchuk}

\address{Department of Mathematics, Yale University, New Haven, CT 06502}

\email{igor.rapinchuk@yale.edu}

\maketitle

\section{Introduction and statement of the main results}\label{S:I}

In their fundamental paper \cite{BT}, Borel and Tits showed that if $G$ and $G'$ are algebraic groups defined over infinite fields $k$ and $k'$, respectively, with $G$ absolutely simple, simply-connected, and $k$-isotropic and $G'$ absolutely simple,
then any abstract homomorphism $\rho \colon G(k) \to G'(k')$ between the groups of rational points such that $\rho (G^+)$ is Zariski-dense in $G'(k')$ (here $G^+$ denotes the subgroup of $G(k)$ generated by the $k$-rational points of the unipotent radicals of the parabolic $k$-subgroups of $G$) can (essentially) be written as
a composition $\sigma \circ F$, where $F \colon G(k) \to _{k'}\!G(k')$ is the homomorphism induced by an embedding of fields $f \colon k \to k'$ and $_{k'}G$ is obtained by base change under $f$, and $\sigma \colon _{k'}G \to G'$ is a $k'$-rational morphism of algebraic groups (see \cite{BT}, Theorem A for a more precise statement). We will refer to such a factorization of $\rho$ as a {\it standard description}. A similar, but more technical, statement was also obtained in the case where $G'$ is just assumed to be reductive (see \cite{BT}, 8.16). Later, Seitz \cite{S} established a (generalized form of the) standard description for abstract homomorphisms of universal Chevalley groups assuming that either $k$ is an infinite {\it perfect} field of positive characteristic or $k$ has characteristic zero and $\rho$ maps the elements of $T(k),$ where $T$ is a fixed maximal $k$-torus of $G$, to semisimple elements of $G'(k').$

On the other hand, Borel and Tits pointed out the existence of abstract homomorphisms that fail to have the above description when $G'$ is not necessarily reductive (cf. \cite{BT}, 8.18). The nature of this example prompted the following conjecture (see \cite{BT}, 8.19):
\vskip3mm

\noindent (BT) \ \ \parbox{14cm}{Let $G$ and $G'$ be algebraic groups defined over infinite fields $k$ and $k'$, respectively. If $\rho \colon G(k) \to G'(k')$ is any abstract homomorphism such that $\rho(G^+)$ is Zariski-dense in $G'(k'),$ then {\it there exists a commutative finite-dimensional $k'$-algebra $B$ and a ring homomorphism $f \colon k \to B$ such that  $\rho = \sigma \circ r_{B/k'} \circ F$, where $F \colon G(k) \to _{B}\!G(B)$ is induced by $f$ ($_{B}\!G$ is the group obtained by change of scalars), $r_{B/k'} \colon _{B}\!G(B) \to R_{B/k'} (_{B}\!G)(k')$ is the canonical isomorphism  (here $R_{B/k'}$ denotes the functor of restriction of scalars), and $\sigma$ is a rational $k'$-morphism of $R_{B/k'} (_{B}\!G)$ to $G'.$}}

\vskip3mm

Shortly after the conjecture was formulated, Tits \cite{T} sketched its proof for the case $k = k' = \R.$ The only available result over general fields is due to L.~Lifschitz and A.~Rapinchuk \cite{LR}, where (BT) was essentially proved in the case where $G$ is an absolutely simple simply connected Chevalley group over a field $k$ of characteristic zero and the unipotent radical of $G'$ is commutative. (We note that the unipotent radical of $G'$ in example 8.18 of \cite{BT} was indeed commutative, but in \cite{LR} examples with the unipotent radical having a prescribed nilpotency class were constructed).

On the other hand, there are important results for abstract homomorphisms of higher rank arithmetic groups and lattices. For example, Bass, Milnor, and Serre \cite{BMS} used their solution of the congruence subgroup problem for $G = SL_n (n \geq 3)$ and $Sp_{2n} (n \geq 2)$ to prove that any representation $\rho \colon G(\Z) \to GL_m (\C)$ coincides on a subgroup of finite index $\Gamma \subset G(\Z)$ with the restriction of some rational morphism $\sigma \colon G \to GL_m$ of algebraic groups. Serre \cite{Serre} established a similar result for the group $SL_2 (\Z[1/p]).$ (We note that the results in \cite{BMS} and \cite{Serre} in fact apply to the groups of points over rings of $S$-integers in arbitrary number fields, but their statements in those cases are a bit more technical). Very general results about representations of higher rank arithmetic groups and lattices are also contained in Margulis's Superrigidity Theorem (cf. \cite{Mar}, Chap. VII, 3.10). At the same time, Steinberg \cite{St2} showed that the above results for representations of $SL_n (\Z),$ with $n \geq 3$, can be derived directly from the commutator relations for elementary matrices. Shenfeld \cite{Sh} used the recent result of Kassabov and Nikolov \cite{KN} on the centrality  of the congruence kernel for $\Gamma_{n,k} = SL_n (\Z[x_1, \dots, x_k])$ ($n \geq 3$ and $k \geq 0$), together with a generalization of the techniques of Bass-Milnor-Serre, to answer in the affirmative
the question of D.~Kazhdan on whether every representation $\rho \colon \Gamma_{n,k} \to GL_m (\C)$ with reductive image coincides, on a subgroup of finite index, with a product of specialization maps $\Gamma_{n,k} \to SL_n (\C) \times \cdots \times SL_n (\C)$ followed by a rational map $SL_n \times \cdots \times SL_n \to GL_m$ (his Theorem 1.4 contains this statement in a slightly different, but equivalent, formulation).
The purpose of this paper is to develop Steinberg's generators-relations approach in order to establish a suitable version of the standard description for linear representations of Chevalley groups over general commutative rings, which contains the results in \cite{LR} and \cite{Sh} as very particular cases.

To state our main results, we need to introduce some notations and definitions. Throughout the paper $\Phi$ will denote a reduced irreducible root system of rank $\geq 2$, and $G$ will be the corresponding universal Chevalley-Demazure group scheme over $\Z$ (cf. \cite{Bo1}). For a commutative ring $R$, the pair $(\Phi, R)$ is called {\it nice} if $2 \in R^{\times}$ whenever $\Phi$ contains a subsystem of type $B_2$, and $2,3 \in R^{\times}$ if $\Phi$ is of type $G_2.$ We let $G(R)^+$ denote the subgroup of $G(R)$ generated by the images of the $R$-points of the canonical one-parameter unipotent subgroups corresponding to the roots in $\Phi$; we will refer to $G(R)^+$ as the {\it elementary subgroup} (cf. $\S$ \ref{S:ARR}). Furthermore, let $K$ be an algebraically closed field. For any finite-dimensional $K$-algebra $B$, one can consider the algebraic $K$-group $\mathcal{G} = R_{B/K} (G)$ obtained from $G$ by restriction of scalars (cf. \S \ref{S:R}). Then there is a canonical identification $\mathcal{G} (K) \simeq G(B)$, and in the sequel we will regard $G(B)$ as an algebraic $K$-group via this identification. Our main result is as follows (cf. Theorem \ref{T:R-1}):

\vskip2mm

\noindent {\bf Main Theorem.} \ {\it Let $\Phi$ be a reduced irreducible root system of rank $\geq 2$, $R$ a commutative ring such that $(\Phi, R)$ is a nice pair, and $K$ an algebraically closed field. Assume that $R$ is noetherian if char $K > 0.$ Furthermore let $G$ be the universal Chevalley-Demazure group scheme of type $\Phi$ and let $\rho \colon G(R)^+ \to GL_n (K)$ be a finite-dimensional linear representation over $K$ of the elementary subgroup $G(R)^+ \subset G(R)$. Set $H = \overline{\rho (G(R)^+)}$ (Zariski closure), and let $H^{\circ}$ denote the connected component of the identity of $H$. Then in each of the following situations
\vskip1mm

\noindent {\rm (1)} $H^{\circ}$ is reductive;

\vskip1mm

\noindent {\rm (2)} char $K = 0$ and $R$ is semilocal;

\vskip1mm

\noindent {\rm (3)} char $K = 0$ and the unipotent radical $U$ of $H^{\circ}$ is commutative,

\vskip1mm

\noindent there exists a commutative finite-dimensional $K$-algebra $B$, a ring homomorphism $f \colon R \to B$ with Zariski-dense image, and a morphism $\sigma \colon G(B) \to H$ of algebraic $K$-groups such that for a suitable subgroup $\Gamma \subset G(R)^+$ of finite index, we have
$$
\rho \vert_{\Gamma} = (\sigma \circ F) \vert_{\Gamma},
$$
where $F \colon G(R)^+ \to G(B)^+$ is the group homomorphism induced by $f$.}

\vskip2mm

Thus if $R = k$ is a field of characteristic $\neq 2$ or 3, then
$R$ is automatically semilocal and $(\Phi, R)$ is a nice pair, so
the Main Theorem provides a proof of (BT) in the case that $G(k)$ is split and $k' = K$ is an algebraically closed field of characteristic zero.
In fact, it appears that the techniques used in the proof of the Main Theorem can be generalized to give a standard description for $\rho$ whenever $R$ is any commutative ring such that $(\Phi, R)$ is a nice pair and $K$ is an algebraically closed field of characteristic zero --- the details will be given elsewhere.

One of the central elements in our proof of the Main Theorem is the use of the notion of an algebraic ring. This approach was suggested by Kassabov and Sapir \cite{Kas} in connection with their analysis of the linearity (over fields) of elementary groups over general associative rings. However, the proof of the Main Theorem requires significantly more information about algebraic rings than is given in \cite{Kas}, so $\S$ \ref{S:AR}, which is of independent interest, is devoted to establishing a number of their algebraic and geometric properties. In \S \ref{S:ARR}, we use computations with Steinberg commutator relations to
associate to a given representation $\rho \colon G(R)^+ \to GL_n (K)$ a certain algebraic ring $A$ together with a homomorphism of abstract rings $f \colon R \to A$ having Zariski-dense image. In \S \ref{S:StG}, we lift $\rho$ to a representation $\tilde{\tau} \colon \tilde{G} (A) \to H$ of the Steinberg group $\tilde{G}(A)$ corresponding to the root system $\Phi$,
and also derive some structural information about $\tilde{G}(A).$ Then, in \S \ref{S:FI}, we use the computations of Stein \cite{St2} of the group $K_2$ over semilocal rings to construct an abstract homomorphism $\sigma \colon G(A^{\circ}) \to H$, where $A^{\circ}$ is the connected component of $A$, such that the composition
$$
G(R)^+ \to G(A) \to G(A^{\circ}) \stackrel{\sigma}{\rightarrow} H,
$$
where the first map is induced by $f$ and the second by the natural projection $A \to A^{\circ}$ (cf. Propositions \ref{P:AR-4} and \ref{P:AR-2}), coincides with $\rho$ on a finite index subgroup $\Gamma \subset G(R)^+.$ Finally, in \S \ref{S:R}, we observe that by the results of \S \ref{S:AR}, the connected component $B = A^{\circ}$ is in fact a $K$-algebra in the situations considered in the Main Theorem, and $\sigma$ is a morphism of algebraic groups, which completes the argument. Examples 6.8 and 6.9 discuss how the Main Theorem implies the results of \cite{Sh} and \cite{LR}, respectively.

\vskip2mm

\noindent {\bf Notations and conventions.} Throughout the paper, $\Phi$ will denote a reduced irreducible root system of rank $\geq 2.$ Unless explicitly stated otherwise, all of our rings are commutative and unital. We let $\mathbb{G}_a = {\rm Spec} \ \Z[T]$ and $\mathbb{G}_m = {\rm Spec} \ \Z[T, T^{-1}]$ be the standard additive and multiplicative group schemes over $\Z$, respectively. Also, as noted earlier, if $R$ is a commutative ring, we say that the pair $(\Phi, R)$ is {\it nice} if $2 \in R^{\times}$ whenever $\Phi$ contains a subsystem of type $B_2$, and $2, 3 \in R^{\times}$ if $\Phi$ is of type $G_2.$ Finally, given an algebraic group $H$ (resp., an algebraic ring $A$), we let $H^{\circ}$ (resp., $A^{\circ}$) denote the connected component of the identity (resp., of zero).

\vskip2mm

\noindent {\bf Acknowledgements.} I would like to thank my advisor Professor G.~Margulis for suggesting the topic of this paper and for many helpful discussions. I would also like to thank Professor A.~Lubotzky for bringing to my attention the work of Shenfeld \cite{Sh}. Finally, I would like to thank the referee for detailed suggestions that helped to improve the exposition.

\vskip5mm

\section{On algebraic rings}\label{S:AR}

Our proof of the main theorem relies on some basic results about algebraic rings. In this section, we discuss the notion of algebraic rings, establish some of their basic algebraic and geometric properties, and in some cases completely describe their structure.
All of our algebraic varieties will be over a fixed algebraically closed field $K$.

\vskip1mm

\noindent {\bf Definition 2.1.} An {\it algebraic ring} is a triple $(A, \ba, \bm)$ consisting of an {\it affine} algebraic variety $A$ and two regular maps $\ba \colon A \times A \to A$ and $\bm \colon A \times A \to A$ (``addition" and ``multiplication") such that

\vskip1mm

\noindent (I) \parbox[t]{14.5cm}{$(A, \ba)$ is a commutative algebraic group (in particular, there exists an element $0_A \in A$ such that $\ba (x, 0_A)= \ba (0_A, x) = x$ and there is a regular map $\iota \colon A \to A$ such that $\ba (x, \iota (x)) = \ba (\iota(x), x) = 0_A$, for all $x \in A$);}

\vskip1mm

\noindent (II) \parbox[t]{14.5cm}{$\bm (\bm(x,y), z) = \bm (x, \bm (y,z))$, for all $x, y, z \in A$ (``associativity");}

\vskip1mm

\noindent (III) \parbox[t]{14.5cm}{$\bm (x, \ba (y,z)) = \ba (\bm(x,y), \bm (x, z))$ and $\bm (\ba (x,y), z) = \ba (\bm (x,z), \bm (y,z))$, for all $x, y, z \in A$ (``distributivity").}

\vskip2mm
\noindent An algebraic ring $(A, \ba, \bm)$ is called {\it commutative} if $\bm (x,y) = \bm (y,x)$, for all $x,y \in A$.

\vskip1mm

\noindent The triple $(A, \ba, \bm)$ is an {\it algebraic ring with identity} if in addition to (I)-(III), we have

\vskip1mm

\noindent (IV) \parbox[t]{14.5cm}{there exists an element $1_A \in A$ such that $ \bm  ( 1_A, x) = \bm (x, 1_A) = x$, for all $x \in A.$}

\vskip1mm

\noindent As a matter of convention, all algebraic rings considered in this paper will be assumed to have an identity element.

\vskip2mm

\noindent We will write $x+y$ and $xy$ for $\ba (x,y)$ and $\bm (x,y)$, respectively, whenever this does not lead to confusion.

\vskip1mm

\noindent {\bf Remark 2.2.} Observe that it follows from condition (III) of the definition that for any $a \in A$, the maps
$$
\lambda_a \colon A \to A, \ \ \ x \mapsto \bm  (a,x)
$$
and
$$
\rho_a \colon A \to A, \ \ \ x \mapsto \bm (x,a),
$$
are endomorphisms of the algebraic group $(A, \ba).$ It is also clear from condition (II) that for all $a , b \in A$, we have $\lambda_{ab} = \lambda_a \circ \lambda_b$ and $\rho_{ab} = \rho_{b} \circ \rho_{a}. $

\vskip2mm

\noindent {\bf Definition 2.3.} Let $(A, \ba, \bm)$ and $(A', \ba', \bm')$ be algebraic rings. A regular map $\varphi \colon A \to A'$ is called a {\it homomorphism of algebraic rings} if
$$
\varphi (\ba(x,y)) = \ba' (\varphi (x), \varphi(y)) \ \ \ {\rm and} \ \ \ \varphi (\bm   (x,y)) = \bm' (\varphi(x), \varphi(y)).
$$
If $A$ and $A'$ are rings with identity, we also require that
$$
\varphi (1_A) = 1_{A'}.
$$
If $\varphi$ is in addition an isomorphism of algebraic varieties, then $\varphi$ is called an {\it isomorphism of algebraic rings}.

\vskip2mm

We begin with a description of the group of units $A^{\times}$ of an algebraic ring $A$.

\addtocounter{thm}{3}

\begin{prop}\label{P:AR-1}
Let $A$ be an algebraic ring. Then

\vskip1mm

\noindent {\rm (i)} \parbox[t]{14cm}{The group of units $A^{\times}$ is a principal open subset of $A$.}

\vskip1mm

\noindent {\rm (ii)} \parbox[t]{14cm}{The map $A^{\times} \to A^{\times}, t \mapsto t^{-1}$ is regular. In particular, $(A^{\times}, \bm)$ is an algebraic group.}
\end{prop}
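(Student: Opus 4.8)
The strategy is to represent $A$ faithfully as a closed subvariety of a matrix algebra over $K$, using the action of $A$ on itself by \emph{right} multiplication, and then to deduce both assertions from the geometry of $GL_d(K)$.

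First I would produce the representation. Since $\bm\colon A\times A\to A$ is regular and $K$ is a field, for each $f\in K[A]$ the function $(x,a)\mapsto f(xa)$ lies in $K[A\times A]=K[A]\otimes_K K[A]$; writing it as $\sum_j p_j(x)q_j(a)$ shows that the pullbacks $\rho_a^*f:=f(\,\cdot\,a)$, as $a$ ranges over $A$, all lie in the finite-dimensional space $\mathrm{span}_K\{p_j\}$. From $\rho_a^*\circ\rho_b^*=\rho_{ab}^*$ (immediate from associativity of $\bm$) it follows that $V_f:=\mathrm{span}_K\{\rho_a^*f:a\in A\}$ is stable under every $\rho_a^*$ and contains $f$ (take $a=1_A$). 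Fix a finite generating set $f_1,\dots,f_r$ of the $K$-algebra $K[A]$, put $V=V_{f_1}+\cdots+V_{f_r}$, choose a basis $v_1,\dots,v_d$ of $V$, and let $\Psi\colon A\to\mathrm{Mat}_d(K)$ send $a$ to the matrix $(c_{kl}(a))$ of $\rho_a^*|_V$ in this basis, so that $\rho_a^*v_k=\sum_l c_{kl}(a)v_l$. A routine expansion of $v_k(xa)$ in $K[A]\otimes_K K[A]$ shows the $c_{kl}$ are regular functions on $A$; moreover $\Psi(1_A)=I_d$ and $\Psi(ab)=\Psi(a)\Psi(b)$ by the composition rule, and $\Psi$ is injective because the numbers $(\rho_a^*f_i)(1_A)=f_i(a)$ determine $a$. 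Finally, evaluating $\rho_a^*v_k=\sum_l c_{kl}(a)v_l$ at $x=1_A$ gives $v_k(a)=\sum_l v_l(1_A)\,c_{kl}(a)$, so every $v_k$, hence each $f_j$, hence (the $f_j$ being algebra generators) all of $K[A]$, lies in the image of $\Psi^*$; thus $\Psi$ is a closed immersion, identifying $A$ with a closed subvariety of $\mathrm{Mat}_d(K)$.

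The crux is the claim that $a\in A^\times$ if and only if $\Psi(a)\in GL_d(K)$. The forward implication is immediate from multiplicativity, since $\Psi(a)\Psi(a^{-1})=\Psi(1_A)=I_d=\Psi(a^{-1})\Psi(a)$. Conversely, if $\Psi(a)$ is invertible then $\rho_a^*|_V\colon V\to V$ is surjective, so each $f_i\in V$ lies in the image of the $K$-algebra endomorphism $\rho_a^*\colon K[A]\to K[A]$; hence $\rho_a^*$ is surjective, and since $K[A]$ is noetherian, a surjective endomorphism of it is bijective. Thus $\rho_a\colon A\to A$ is bijective, so $ba=1_A$ for some $b\in A$, and then $\rho_a(ab)=a(ba)=a=\rho_a(1_A)$ forces $ab=1_A$, i.e. $a\in A^\times$. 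Given the claim, $A^\times=\{a\in A:\det\Psi(a)\ne 0\}$ is the principal open subset cut out by $\delta:=\det\circ\,\Psi\in K[A]$, which proves (i); in particular $A^\times$ is an affine variety. For (ii), Cramer's rule makes $M\mapsto M^{-1}$ regular on $GL_d(K)$, so $a\mapsto\Psi(a)^{-1}$ is regular on $A^\times$; and since $\Psi(a)^{-1}=\Psi(a^{-1})$ lies in the closed subvariety $\Psi(A)$ while $\Psi$ is a closed immersion, composing with $\Psi^{-1}$ shows $a\mapsto a^{-1}$ is regular on $A^\times$. Together with regularity of $\bm$, this exhibits $(A^\times,\bm)$ as an algebraic group.

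I expect the main obstacle to be the converse direction of the claim: invertibility of the finite-dimensional truncation $\Psi(a)$ says only that right multiplication by $a$ is invertible on $V$, and one must bootstrap this to genuine invertibility of $a$ in $A$. This is precisely where it is essential that $V$ was chosen to contain algebra generators of $K[A]$, together with the elementary fact that a surjective endomorphism of a noetherian ring is bijective. A more routine but still necessary point is verifying regularity of $\Psi$ and its entries, which rests throughout on the identification $K[A\times A]=K[A]\otimes_K K[A]$.
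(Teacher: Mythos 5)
Your proof is correct, and the core construction is the same as the paper's: a finite-dimensional subspace $V\subset K[A]$ containing algebra generators and stable under the pullbacks of the multiplication operators, the regular matrix-valued function recording the induced operators on $V$, and the identification of $A^{\times}$ with the nonvanishing locus of its determinant; part (ii) likewise comes down to inverting that matrix (you route it through Cramer's rule on $GL_d$ and the closed immersion $\Psi$, the paper solves directly for $v_i(t^{-1})$ — same substance). The one step where you genuinely diverge is the converse implication $\det\Psi(a)\neq 0\Rightarrow a\in A^{\times}$: the paper only extracts \emph{injectivity} of the multiplication map from surjectivity of its comorphism (regular functions separate points) and then proves surjectivity by an algebraic-group argument — $\lambda_a(A^{\circ})$ is a closed subgroup of $A^{\circ}$ of full dimension, hence equals $A^{\circ}$, and injectivity keeps the finitely many cosets distinct. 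You instead upgrade surjectivity of the comorphism to bijectivity via the fact that a surjective endomorphism of a noetherian ring is injective, so that $\rho_a$ is an isomorphism of varieties outright. Both are valid; your version trades the group-theoretic input (closedness of images of morphisms of algebraic groups) for a piece of commutative algebra, and has the minor aesthetic advantage of not using the additive group structure at all in this step.
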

\begin{proof}
(i) Let $\mu^* \colon K[A] \to K[A] \otimes_K K[A]$ be the comorphism corresponding to the multiplication map $\bm \colon A \times A \to A.$ Then, given $f \in K[A]$, we can write $\mu^* (f) = \sum s_i \otimes t_i$ with $s_i, t_i \in K[A].$ This means that
$$
f(xy) = \sum s_i (x) t_i (y)
$$
for all $x,y \in A,$ and in particular, for any $a \in A$,
\begin{equation}\label{E:AR101}
\lambda_a^* (f) = \sum s_i (a) t_i.
\end{equation}
It follows that the functions $\lambda_a^* (f), a \in A$, span a finite-dimensional subspace of $K[A].$ Let $f_1, \dots, f_r$ be a finite system of generators of $K[A]$ as a $K$-algebra. Then the above argument implies that
$$
V : = {\rm span} \{ \lambda_a^* (f_i) \mid a \in A, i = 1, \dots, r \}
$$
is a finite-dimensional subspace of $K[A]$ invariant under all the $\lambda_a^*$.

Let $v_1, \dots, v_n$ be a basis of $V$, and let $\{ w_j \}_{j \in J}$ be a family of elements in $K[A]$ such that $\mathcal{B} = \{ v_1, \dots, v_n \} \cup \{ w_j \}_{j \in J}$ is a $K$-basis of $K[A].$ Let $f \in V.$ Writing the functions $t_i$ in (\ref{E:AR101}) as linear combinations of elements of $\mathcal{B}$, we find that there exist $p_i, q_j \in K[A]$ ($i = 1, \dots, n$, $j \in J$) such that $q_j = 0$ for almost all $j$ and
$$
\lambda_a^* (f) = \sum_{i=1}^n p_i (a) v_i + \sum_{j \in J} q_j (a) w_j
$$
for any $a \in A$. Since $V$ is $\lambda_a^*$-invariant, we see that $q_j = 0$ for all $j$. Thus, there exist $p_{ij} \in K[A]$ such that
\begin{equation}\label{E:AR-50}
\lambda_a^* (v_j) = \sum_{i=1}^n p_{ij} (a) v_i
\end{equation}
for all $a \in A$ and all $j= 1, \dots, n.$ Then $X(a) = (p_{ij} (a))$ is the matrix of the restriction $\ell_a := \lambda_a^* \vert V$ with respect to the basis $v_1, \dots, v_n.$ The function $\chi (a) := \det X(a)$ is regular on $A$, and it is enough to show that $A^{\times}$ coincides with the principal open set
$$
D(\chi) = \{ a \in A \mid \chi (a) \neq 0 \}.$$
For $a \in A^{\times}$, the operator $\lambda_a^*$, hence also $\ell_a$, is invertible, so $\chi (a) \neq 0,$ proving the inclusion $A^{\times} \subset D(\chi).$ Conversely, suppose $\chi (a) \neq 0$. Then $\lambda_a (V) = V,$ and therefore $\lambda_a^* \colon K[A] \to K[A]$ is surjective. Since $A$ is affine, regular functions separate points, so we conclude that $\lambda_a \colon A \to A$ is injective. Let $A^{\circ}$ be the connected component of $0_A$ of the algebraic group $(A, \alpha).$ Since $\lambda_a$ is an endomorphism of the latter, we see that $\lambda_a(A^{\circ})$ is a closed subgroup of $A^{\circ}$ (\cite{Bo}, Corollary 1.4). But the injectivity of $\lambda_a$ implies that $\dim \lambda_a (A^{\circ}) = \dim A^{\circ},$ so actually $\lambda_a (A^{\circ}) = A^{\circ}.$ If $A = \cup_{i=1}^d (a_i + A^{\circ})$ is the decomposition into irreducible components, then
$$
\lambda_a (A) = \cup_{i=1}^d (\lambda_a (a_i) + A^{\circ}).
$$
By the injectivity of $\lambda_a$, the cosets $\lambda_a (a_i) + A^{\circ}$ $(i = 1, \dots, d)$ are all distinct, and therefore $\lambda_a (A) = A.$ In particular, there exists $b \in A$ such that $ab = 1.$ Then $\lambda_a (ba) = a = \lambda_a (1),$ and therefore $ba = 1.$ Thus, $a \in A^{\times}$, as required.

(ii) Let $t \in A^{\times}$. Then it follows from (\ref{E:AR-50}) that for each $j = 1, \dots, n,$ we have
$$
v_j (1) = \sum_{i=1}^n p_{ij} (t) v_i (t^{-1}).
$$
Now since $t$ is a unit, the matrix $X(t) = (p_{ij} (t))$ is invertible, so we can express $v_1 (t^{-1}),$ $\dots,$ $v_n (t^{-1})$ as polynomials in  $p_{ij} (t)$, $i,j = 1, \dots, n$, and $\chi(t)^{-1} = (\det X(t))^{-1}.$ Hence, since $\{v_1, \dots, v_n \}$ generate $K[A]$ and $A^{\times} = D(\chi)$, it follows that the map $A^{\times} \to A^{\times}$, $t \mapsto t^{-1}$ is regular, as claimed.
\end{proof}

Our proof of the main theorem relies on computations, due to M.~Stein \cite{St2}, of the group $K_2$ of a semilocal commutative ring. We will now establish two properties needed for the application of Stein's results, viz.,
that connected algebraic rings are generated by their units (Corollary \ref{C:AR-1}), and that all commutative algebraic rings \footnote{As the referee pointed out, most of the results in this section can be easily generalized to noncommutative algebraic rings. However, since only {\it commutative} algebraic rings arise in the proof of the Main Theorem (in fact, Chevalley groups of type other than $A_n$ cannot be defined over noncommutative rings), we limit our treatment, for the most part, to commutative algebraic rings.} are semilocal (Lemma \ref{L:AR-1}).
\begin{cor}\label{C:AR-1}
Let $A$ be a connected algebraic ring. Then $A = A^{\times} - A^{\times}$.
\end{cor}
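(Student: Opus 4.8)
The plan is to use the fact that a connected algebraic ring is irreducible as a variety, so that any two nonempty open subsets of $A$ must intersect.

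First I would observe that since $A$ is connected, the algebraic group $(A, \ba)$ is connected, hence irreducible; thus the underlying affine variety $A$ is irreducible. By Proposition~\ref{P:AR-1}(i), the group of units $A^{\times}$ is the principal open set $D(\chi)$, and it is nonempty since $1_A \in A^{\times}$. A nonempty open subset of an irreducible variety is dense, so $A^{\times}$ is dense in $A$.

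Now fix $a \in A$ and consider the translation $\tau_a \colon A \to A$, $x \mapsto \ba(a, x)$; it is an automorphism of the variety $A$, with inverse $x \mapsto \ba(\iota(a), x)$. Hence $\tau_a(A^{\times}) = a + A^{\times}$ is again a nonempty open --- and therefore dense --- subset of $A$. By irreducibility, $A^{\times} \cap (a + A^{\times}) \neq \emptyset$, so there exist $u, v \in A^{\times}$ with $u = \ba(a, v)$. It follows that $a = \ba(u, \iota(v)) \in A^{\times} - A^{\times}$, and since $a$ was arbitrary, $A = A^{\times} - A^{\times}$.

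I do not expect any genuine obstacle here. The only inputs beyond Proposition~\ref{P:AR-1} are the standard fact that a connected algebraic group is irreducible, applied to $(A, \ba)$, and the trivial observation that $A^{\times}$ contains the identity and hence is nonempty; everything else is the elementary topology of irreducible varieties together with the fact that additive translations are automorphisms of the variety $A$.
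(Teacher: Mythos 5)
Your proof is correct and follows essentially the same route as the paper: both use Proposition~\ref{P:AR-1} to see that $A^{\times}$ is a dense open subset of the connected (hence irreducible) variety $A$, and then intersect $A^{\times}$ with its translate $a + A^{\times}$ to write $a$ as a difference of units. You simply spell out the irreducibility and translation-automorphism details that the paper leaves implicit.
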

\begin{proof}
Since $A$ is connected, it follows from the proposition that $A^{\times}$ is dense in $A$. So, for any $a \in A$, we have $(a + A^{\times}) \cap A^{\times} \neq \varnothing,$ and the required fact follows.
\end{proof}
We also note that Proposition \ref{P:AR-1} puts strong restrictions on algebraic division rings (i.e. algebraic rings $A$ in which $A^{\times} = A \setminus \{ 0 \}$).
\begin{cor}\label{C:AR-3}
Let $A$ be an algebraic division ring. Then $\dim A \leq 1.$
\end{cor}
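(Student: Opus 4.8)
The plan is to derive the bound from two ingredients: Proposition~\ref{P:AR-1}, which exhibits the unit group $A^{\times}$ as a principal open --- hence affine --- subvariety of $A$; and the algebraic Hartogs extension property, which states that a regular function defined on the complement of a closed subset of codimension $\geq 2$ in a normal variety extends regularly to the whole variety. If $\dim A = 0$ there is nothing to prove, so throughout I assume $\dim A \geq 1$.

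The first step is to show that $A$ is connected. Fix $a \in A$. By Remark~2.2 the map $\lambda_a \colon x \mapsto ax$ is an endomorphism of the algebraic group $(A, \ba)$; since the image of the connected set $A^{\circ}$ under a group endomorphism is connected and contains $\lambda_a(0_A) = 0_A$, we get $aA^{\circ} = \lambda_a(A^{\circ}) \subseteq A^{\circ}$, and symmetrically $A^{\circ}a \subseteq A^{\circ}$ via $\rho_a$. As $A^{\circ}$ is also a subgroup of $(A, \ba)$, it is a two-sided ideal of the abstract ring $A$. Since the connected components of $(A,\ba)$ are the cosets of $A^{\circ}$, each of dimension $\dim A^{\circ}$, we have $\dim A^{\circ} = \dim A \geq 1$, so $A^{\circ} \neq \{0_A\}$; because $A$ is a division ring, its only two-sided ideals are $\{0_A\}$ and $A$, and therefore $A^{\circ} = A$.

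Now $A$ is a connected algebraic group under $\ba$ over the algebraically closed field $K$, hence irreducible and smooth --- in particular normal. By Proposition~\ref{P:AR-1}(i), $A^{\times} = A \setminus \{0_A\} = D(\chi)$ is a principal open subset of the affine variety $A$, hence is itself affine. Suppose, for contradiction, that $\dim A \geq 2$. Then $\{0_A\}$ is a closed subset of codimension $\geq 2$ of the normal variety $A$, so by algebraic Hartogs the restriction map $K[A] \to \mathcal{O}(A \setminus \{0_A\})$ is an isomorphism. Since $A \setminus \{0_A\}$ is affine, $\mathcal{O}(A \setminus \{0_A\})$ is its coordinate ring, so the inclusion $A \setminus \{0_A\} \hookrightarrow A$ is a morphism of affine varieties inducing an isomorphism on coordinate rings; it must then be an isomorphism, which is absurd since it is not surjective. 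Hence $\dim A \leq 1$.

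The main obstacle is precisely the input that $A \setminus \{0_A\}$ cannot be affine once $\dim A \geq 2$, and it is here that normality (equivalently, smoothness) of $A$ is essential; I would supply this via the standard fact that a reduced algebraic group of finite type over a perfect field is smooth. If one prefers to avoid Hartogs, the same conclusion follows from Serre's cohomological criterion for affineness together with the nonvanishing of the local cohomology group $H^{2}_{\{0_A\}}(A, \mathcal{O}_A)$, which again relies on $A$ being Cohen--Macaulay at $0_A$ --- a consequence of smoothness. Everything else in the argument is formal.
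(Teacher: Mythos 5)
Your argument is correct, but it takes a genuinely different and considerably heavier route than the paper. The paper's proof uses only the first ingredient you isolate: since $A^{\times}=D(\chi)$ is a \emph{principal} open set by Proposition~\ref{P:AR-1}(i), its complement $V(\chi)$ is the zero locus of the single regular function $\chi$; for a division ring $V(\chi)=\{0_A\}$ is nonempty, so Krull's Hauptidealsatz (the Dimension Theorem) forces $0\geq \dim A-1$, and one is done in two lines with no connectedness, smoothness, or normality input. You instead discard the information that the complement of $A^{\times}$ is a hypersurface and use only that $A^{\times}=A\setminus\{0_A\}$ is affine, which obliges you to first prove $A$ is connected (via the nice observation that $A^{\circ}$ is a nonzero two-sided ideal, hence all of $A$), then invoke smoothness of reduced group varieties over a perfect field to get normality, and finally apply algebraic Hartogs to conclude that the punctured variety cannot be affine in dimension $\geq 2$. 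All of these steps check out --- and your argument yields the extra fact that an algebraic division ring of positive dimension is irreducible, which the paper's proof does not give --- but the principal-open structure of $A^{\times}$ is exactly what makes the elementary dimension-theoretic argument available, and it is worth noticing that it renders the Hartogs machinery unnecessary here.
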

\begin{proof}
By Proposition \ref{P:AR-1}, $A^{\times} = D(\chi).$ The closed set $V(\chi) := D(\chi)^c$ contains 0, hence is nonempty. By the Dimension Theorem, $\dim V(\chi) \geq \dim A -1.$ On the other hand, $V(\chi) = \{ 0 \},$ so $\dim A \leq 1.$
\end{proof}



Algebraic rings arising in the proof of the main result come equipped with a homomorphism of abstract rings $f \colon R \to A$, where $R$ is a given commutative abstract ring, such that $\overline{f(R)} = A$, so we will establish a couple of results (Corollary \ref{C:AR-2} and Lemma \ref{L:AR-3}) relating abstract properties of $R$ and $A$. First, Proposition \ref{P:AR-1} yields the following
\begin{cor}\label{C:AR-2}
Let $f \colon R \to A$ be an abstract ring homomorphism of an abstract commutative semilocal ring into a connected commutative algebraic ring such that $\overline{f(R)} = A.$ Then $\overline{f(R^{\times})} = A.$
\end{cor}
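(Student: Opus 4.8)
The plan is to reduce the statement to the fact, already established in Proposition~\ref{P:AR-1}, that $A^{\times}$ is a dense principal open subset of $A$, and to combine this with the hypothesis that $R$ is semilocal. The key observation is that being semilocal gives us strong control: a commutative semilocal ring $R$ has the property that for any finitely many ideals, if a ``Zariski-type'' density holds for $f(R)$, then it should also hold for $f(R^{\times})$, because the non-units of $R$ are confined to the (finitely many) maximal ideals. More precisely, I would first note that since $\overline{f(R)} = A$ and $A$ is connected, $A$ is irreducible, so any nonempty open subset of $A$ is dense; in particular $A^{\times}$ is dense by Proposition~\ref{P:AR-1}(i) applied together with connectedness (this is exactly Corollary~\ref{C:AR-1}'s reasoning). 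Thus it suffices to show that $f(R^{\times})$ is dense in $A^{\times}$, or even just that $\overline{f(R^{\times})}$ meets every nonempty open subset of $A$.

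The main step is the following. Let $\m_1, \dots, \m_k$ be the maximal ideals of $R$, so that $R \setminus R^{\times} = \m_1 \cup \cdots \cup \m_k$. Writing $\chi \in K[A]$ for the regular function with $A^{\times} = D(\chi)$ from Proposition~\ref{P:AR-1}, I want to show $\overline{f(R^{\times})} = A$. Suppose not; then $\overline{f(R^{\times})}$ is a proper closed subset of the irreducible variety $A$, hence of strictly smaller dimension, and there is a nonzero $g \in K[A]$ vanishing on $f(R^{\times})$. Consider the regular function $g \cdot \chi$ on $A$: it vanishes on $f(R^{\times})$ (because $g$ does) and also on $A \setminus A^{\times} = V(\chi)$ (because $\chi$ does), hence on all of $\overline{f(R)} = A$ once we check it vanishes on the set-theoretic image $f(R)$. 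Indeed, for $r \in R$, either $r \in R^{\times}$, in which case $(g\chi)(f(r)) = 0$, or $r \in R \setminus R^{\times}$; in the latter case I claim $f(r)$ still lies in $\overline{f(R^{\times})} \cup V(\chi)$ — this is where the semilocal hypothesis does its work. Since $R$ is semilocal, every non-unit $r$ can be perturbed within its coset: by prime avoidance, for each $i$ one can find $u_i \in R^{\times}$ congruent to a suitable element, and more directly, $r + x$ is a unit for an appropriate $x$, so $f(r) = f(r+x) - f(x)$ with $r+x \in R^{\times}$. The cleanest route: for a commutative semilocal ring, $R = R^{\times} + R^{\times}$ (choose $u \in R^{\times}$ with $r - u \in R^{\times}$, possible because the non-units lie in finitely many maximal ideals and one can solve the relevant system of congruences via CRT). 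Hence $f(R) \subset f(R^{\times}) + f(R^{\times}) \subset \overline{f(R^{\times})} + \overline{f(R^{\times})}$, which is closed (it is the image of a morphism from an irreducible variety, hence has closure equal to... ) — more carefully, $\overline{f(R^{\times})} + \overline{f(R^{\times})}$ contains $f(R)$ so its closure is $A$, and since $\overline{f(R^{\times})}$ is irreducible its sumset's closure is the closure of $\overline{f(R^{\times})} + \overline{f(R^{\times})}$; if $\overline{f(R^{\times})}$ were a proper subvariety $C \subsetneq A$ then $C + C$ would have dimension $\le 2 \dim C$, which need not immediately contradict $\dim A$. So the dimension count alone is insufficient, and I instead argue directly.

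The direct argument, which I expect to be the real content: since $R$ is semilocal, $R = R^{\times} + R^{\times}$, so $A = \overline{f(R)} \subseteq \overline{f(R^{\times}) + f(R^{\times})} \subseteq \overline{\,\overline{f(R^{\times})} + \overline{f(R^{\times})}\,}$. Now let $C = \overline{f(R^{\times})}$, a closed subgroup-closed... no — $C$ is just a closed irreducible subvariety of $A$. But $f(R^{\times})$ is a subgroup of $(A^{\times}, \bm)$ under multiplication and its closure $C \cap A^{\times}$ is a closed subgroup of the algebraic group $A^{\times}$; meanwhile $C$ is stable under addition by elements of $f(R)$? Not obviously. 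The correct pivot is: consider the addition map $\alpha \colon C \times C \to A$; its image has closure $\overline{C + C}$, and we have shown $\overline{C+C} = A$. Since $C \times C$ is irreducible of dimension $2\dim C$, we get $\dim C \ge \tfrac12 \dim A$, which is progress but not the claim. To finish I would use that $f(R^{\times})$ is actually Zariski-dense in $A^{\times}$ as a \emph{group}: $\overline{f(R^{\times})}^{\,\bm}$ is a closed subgroup $H \le A^{\times}$, and I must show $H = A^{\times}$. Here is the key: $f(R)$ normalizes nothing, but $f(R^{\times}) \cdot f(1 + \mathrm{rad}) $... Let me instead invoke that $R$ semilocal implies $R^{\times}$ is Zariski-dense in $R$ in the sense needed, transported by $f$: the closure of $f(R)$ is $A$; the closure of $f(R^{\times})$ is a union of cosets — no. Honestly, the slickest proof: $A^{\times} = D(\chi)$ is a \emph{principal} open set, so $K[A^{\times}] = K[A][1/\chi]$, and $f(R^{\times}) \subseteq A^{\times}$; a regular function on $A^{\times}$ vanishing on $f(R^{\times})$ has the form $h/\chi^m$ with $h \in K[A]$ vanishing on $f(R^{\times})$; then $h \cdot \chi$ vanishes on $f(R^{\times})$ and on $V(\chi)$, hence (using $R = R^{\times} + R^{\times}$ and that a product of two ideals' complements covers $R$) we want $h\chi$ to vanish on $f(R)$. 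For $r \in R$, write $r = u + w$ with $u, w \in R^{\times}$; this does not directly give $f(r) \in V(\chi) \cup \overline{f(R^{\times})}$.

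I therefore expect the \textbf{main obstacle} to be precisely this passage from multiplicative density of $f(R^{\times})$ in $A^{\times}$ to density in $A$, and I believe the intended resolution is a clean translation trick: for \emph{any} fixed $a_0 = f(r_0)$ with $r_0 \in R^{\times}$, the set $f(R^{\times}) - a_0 \cdot f(R^{\times})$... or rather, since $R$ is semilocal, $R = R^{\times} - R^{\times}$ (Corollary~\ref{C:AR-1}-style, but for $R$: given $r$, pick $u \in R^{\times}$ avoiding all maximal ideals such that $r + u$ also does, via CRT), giving $f(R) \subseteq f(R^{\times}) - f(R^{\times})$, hence $A = \overline{f(R)} \subseteq \overline{f(R^{\times}) - f(R^{\times})}$. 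Now the map $\beta \colon \overline{f(R^{\times})} \times \overline{f(R^{\times})} \to A$, $(x,y) \mapsto x - y$, is a morphism of the irreducible variety $C \times C$ onto a subset whose closure is $A$, so $\dim A \le 2 \dim C$. If moreover $f(R^{\times})$ is a \emph{coset} of a subgroup times something... At this point, rather than the dimension bound, I would close the argument by noting that $C = \overline{f(R^{\times})}$ is closed under multiplication by $f(R^{\times})$ and, since $1_A = f(1) \in C$ and $f(R^{\times})$ is dense in $C$ for the multiplicative structure, $C \cap A^{\times}$ is a closed subgroup of $A^{\times}$ containing a Zariski-dense subgroup; if $C \cap A^{\times} \subsetneq A^{\times}$ then, taking the additive translates $a + (C\cap A^{\times})$ over $a \in f(R)$, these must cover a dense subset of $A$, and since each has dimension $< \dim A$ there can be only finitely many distinct irreducible components among their closures, forcing $f(R)$ to lie in a finite union of translates of $C$ — but $f(R)$ is dense and $A$ is irreducible, so one such translate is all of $A$, i.e. $A = a + C$ for some $a$, whence $C = A - a$ has dimension $\dim A$, contradiction unless $C = A$. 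Cleaning up this last paragraph into a correct short argument (most likely: ``$R$ semilocal $\Rightarrow R = R^\times - R^\times \Rightarrow A = \overline{f(R^\times)}$ because $\overline{f(R^\times)}$, being the closure of a multiplicatively closed set containing $1$, is all of $A$ once it has full dimension, and full dimension follows from $A$ being covered by finitely many additive translates of it'') is the step requiring the most care.
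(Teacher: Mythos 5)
There is a genuine gap — in fact two. Your proposed pivot identity, that a commutative semilocal ring satisfies $R = R^{\times} + R^{\times}$ (or $R = R^{\times} - R^{\times}$), is false in general: it fails whenever some residue field is $\mathbb{F}_2$. For $R = \Z_{(2)}$ one has $R^{\times} \pm R^{\times} \subset \mathfrak{m}$, since a sum or difference of two units is congruent to $0$ modulo the maximal ideal; so not even the units of $R$ are differences of units. The CRT argument you invoke needs every residue field to have at least three elements, and the corollary carries no such hypothesis. (The analogous statement $A = A^{\times} - A^{\times}$ for the \emph{algebraic} ring, Corollary~\ref{C:AR-1}, works for a different reason — $A^{\times}$ is a dense open subset of the irreducible variety $A$ — and that reasoning has no counterpart in the abstract ring $R$.) Second, even granting $A \subseteq \overline{C - C}$ with $C = \overline{f(R^{\times})}$, you never get from there to $C = A$: the dimension count only yields $\dim C \geq \frac{1}{2}\dim A$, as you note, and the closing argument — that the additive translates $a + C$, $a \in f(R)$, covering a dense set forces finitely many of them to do so — is invalid. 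A dense subset of an irreducible variety can perfectly well be covered by infinitely many proper closed translates (e.g.\ $\Z^2 \subset \Af^2$ is covered by points). You also assert without justification that $\overline{f(R^{\times})}$ is irreducible.

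The paper's proof uses the finiteness of the maximal ideals in a more refined way. Split the maximal ideals of $R$ into $\mathfrak{m}_1, \dots, \mathfrak{m}_{\ell}$ with $\overline{f(\mathfrak{m}_i)} = A$ and the rest $\mathfrak{m}_{\ell+1}, \dots, \mathfrak{m}_n$. For $i \leq \ell$, density forces $f(\mathfrak{m}_i) \cap A^{\times} \neq \varnothing$ by Proposition~\ref{P:AR-1}, hence $f(\mathfrak{m}_1 \cdots \mathfrak{m}_{\ell})$ meets $A^{\times}$; since $\overline{f(\mathfrak{m}_1 \cdots \mathfrak{m}_{\ell})}$ is an ideal of $A$ containing a unit, it is all of $A$. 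Therefore $f(1 + \mathfrak{m}_1 \cdots \mathfrak{m}_{\ell})$ is dense, and intersecting it with the dense open set $V = A \setminus \bigcup_{i > \ell} \overline{f(\mathfrak{m}_i)}$ (dense because $A$ is irreducible and each $\overline{f(\mathfrak{m}_i)}$, $i > \ell$, is a proper closed subset) produces a dense subset of $A$ consisting of images of elements lying in no maximal ideal, i.e.\ of units. The key ideas your sketch is missing are the dichotomy on maximal ideals according to density of their images and the use of the ideal structure of $A$ (an ideal of $A$ meeting $A^{\times}$ equals $A$); the only "perturbation" needed is the additive translation by $1$.
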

\begin{proof}
Let $\mathfrak{m}_1, \dots, \mathfrak{m}_{\ell}$ be the maximal ideals of $R$ such that $\overline{f(\mathfrak{m}_i)} = A,$ and let $\mathfrak{m}_{\ell+1}, \dots, \mathfrak{m}_n$ be all other maximal ideals. It follows from Proposition \ref{P:AR-1} that $f(\mathfrak{m}_i) \cap A^{\times} \neq \varnothing$ for all $i = 1, \dots, \ell,$ and therefore $f(\mathfrak{m}_1 \cdots \mathfrak{m}_{\ell}) \cap A^{\times} \neq \varnothing.$ The assumption $\overline{f(R)} = A$ implies that $\overline{f (\mathfrak{m}_1 \cdots \mathfrak{m}_{\ell})}$ is an ideal of $A$, so we conclude that $\overline{f(\mathfrak{m}_1 \cdots \mathfrak{m}_{\ell})} = A.$ Now the fact that $A$ is connected implies that $V := A \setminus \cup_{i = \ell + 1}^n \overline{f(\mathfrak{m}_i)}$ is a dense open subset of $A$, and therefore $T:= f(1 + \mathfrak{m}_1 \cdots \mathfrak{m}_{\ell}) \cap V$ is dense in $A$. On the other hand, if $x \in 1 + \mathfrak{m}_1 \cdots \mathfrak{m}_{\ell}$ is such that $f(x) \in V,$ then $x \not\in \cup_{i = 1}^n \mathfrak{m}_i,$ and therefore $x \in R^{\times}.$ Thus, $T \subset f(R^{\times}),$ so the latter is dense in $A$.
\end{proof}

Next, we show that our description of $A^{\times}$ enables us to prove that any commutative algebraic ring $A$ is automatically semilocal as an abstract ring. Before formulating and proving this result, we observe that given an algebraic ring $A$ and a closed 2-sided ideal $\mathfrak{a} \subset A$, the quotient $A/ \mathfrak{a}$ in the category of additive algebraic groups has a natural structure of an algebraic ring. Indeed, since $\mathfrak{a}$ is a closed normal subgroup of the affine group $(A, \ba),$ the quotient $(A/ \mathfrak{a}, \bar{\ba})$ is an affine algebraic group (\cite{Bo}, Theorem 6.8), where $\bar{\ba} \colon A/\mathfrak{a} \times A/\mathfrak{a} \to A/\mathfrak{a}$ is the natural map induced by $\ba$.
Moreover, using the fact that $\mathfrak{a}$ is a 2 -sided ideal of $A,$ it is easy to see that the composite map
$A \times A \stackrel{\bm}{\longrightarrow} A \to A/\mathfrak{a}$ is constant on additive cosets modulo $\mathfrak{a}
\times \mathfrak{a},$ hence factors through $(A \times A)/(\mathfrak{a} \times \mathfrak{a}) \simeq A/\mathfrak{a} \times A/\mathfrak{a}.$
The resulting map $\bar{\bm} \colon A/\mathfrak{a} \times A/\mathfrak{a} \to A/\mathfrak{a}$ and the map $\bar{\ba}$ make $A/\mathfrak{a}$ into an algebraic
ring.
\begin{lemma}\label{L:AR-1}
Let $A$ be a commutative algebraic ring. Then $A$ is semilocal as an abstract ring.
\end{lemma}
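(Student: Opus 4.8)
The goal is to show that a commutative algebraic ring $A$ over $K$ has only finitely many maximal ideals. The plan is to exploit Proposition \ref{P:AR-1}, which tells us $A^{\times} = D(\chi)$ is a principal open subset, together with the noetherian nature of the Zariski topology on the affine variety $A$. The key observation is that a maximal ideal $\m \subset A$, viewed as a subset of the variety $A$, must be \emph{disjoint} from $A^{\times} = D(\chi)$; hence $\m \subset V(\chi)$, the closed complement. So every maximal ideal sits inside the proper closed subset $V(\chi)$, whose dimension is strictly less than $\dim A$ (by the Dimension Theorem, as in Corollary \ref{C:AR-3}, since $V(\chi) \ni 0_A$ but $V(\chi) \neq A$ as $1_A \in A^{\times}$).

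First I would reduce to the case where $A$ is connected, i.e. $A = A^{\circ}$. If $A = \bigcup_{i=1}^d (a_i + A^{\circ})$ is the decomposition into irreducible (= connected) components, then $A^{\circ}$ is an ideal of finite index in $A$ (it is a closed subgroup under $\ba$, and closure of multiplication preserves it since $\m$-type arguments or simply: $\lambda_a(A^{\circ})$ is connected and contains $0_A$, hence lies in $A^{\circ}$). The quotient $A/A^{\circ}$ is then a finite ring, which is trivially semilocal, and there is an order-preserving correspondence between maximal ideals of $A$ containing $A^{\circ}$ and those of the finite ring $A/A^{\circ}$; it remains to bound the maximal ideals $\m$ not containing $A^{\circ}$. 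For such $\m$, the ideal $\m \cap A^{\circ}$ is a maximal ideal of $A^{\circ}$ (since $A = \m + A^{\circ}$ forces $A/\m = A^{\circ}/(\m \cap A^{\circ})$ to be a field), and distinct such $\m$ can yield the same $\m \cap A^{\circ}$ only up to a bounded ambiguity governed by the finite group $A/A^{\circ}$. So it suffices to prove the statement for $A$ connected.

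Now assume $A$ is connected. I would argue by noetherian induction on $\dim A$, or directly as follows. Let $\m$ be a maximal ideal. As noted, $\m \subset V(\chi)$, a proper closed subset, so $\overline{\m}$ (Zariski closure, which is again an ideal of $A$, being the closure of a subgroup stable under all $\lambda_a$ by Remark 2.2) is a proper closed ideal, hence $\overline{\m} = \m$ by maximality — wait, rather $\overline{\m}$ is a proper ideal containing $\m$, so $\overline{\m} = \m$: thus \emph{maximal ideals of a commutative algebraic ring are Zariski-closed}. Each such $\m$ is then a closed subgroup of $(A,\ba)$, hence a finite union of cosets of $\m^{\circ} := \m \cap A^{\circ}$-translates — more precisely $\m$ is a union of irreducible components each of dimension $\le \dim V(\chi) < \dim A$. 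Passing to the quotient algebraic ring $\bar A = A/\m^{\circ}$ (using the construction described just before the lemma, valid since $\m^{\circ}$ is a closed 2-sided ideal), which has strictly smaller dimension, the image $\bar\m$ is a maximal ideal, and the components of $A$ through maximal ideals correspond to maximal ideals of the lower-dimensional $\bar A$; by induction $\bar A$ has finitely many. The truly delicate point — and the main obstacle — is ensuring the bookkeeping at dimension-zero and the reduction to connected components is airtight: one must know that a connected algebraic ring $A$ of dimension $0$ is a single point (hence $A = K \cdot 1_A$ if it even has nontrivial dimension-zero structure), and that a $0$-dimensional algebraic ring is finite, so trivially semilocal. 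I expect that the cleanest route is: since every maximal ideal is closed and contained in $V(\chi)$ whose dimension drops, induct on $\dim A$ — the base case $\dim A = 0$ gives $A$ finite — while carefully handling the passage $A \rightsquigarrow A/\m^{\circ}$ and the finiteness of $A/A^{\circ}$ to keep the count of maximal ideals finite throughout.
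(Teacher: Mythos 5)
Your opening step is correct and is essentially the paper's: a maximal ideal $\mathfrak{m}$ is disjoint from $A^{\times}$, its Zariski closure is again a proper ideal containing $\mathfrak{m}$, hence every maximal ideal is closed. (The paper phrases this as: if $\mathfrak{m}$ were not closed, then $\overline{\mathfrak{m}}$ would be all of $A$ and would meet the open set $A^{\times}$.) After this point, however, the finiteness count is never actually completed, and you flag this yourself as ``the main obstacle.'' Two concrete gaps. First, the proposed induction via $\bar{A} = A/\mathfrak{m}^{\circ}$ does not bound the number of maximal ideals of $A$: the quotient depends on $\mathfrak{m}$, so knowing that each $A/\mathfrak{m}^{\circ}$ has finitely many maximal ideals gives nothing unless you also show that only finitely many closed connected ideals occur as $\mathfrak{m}^{\circ}$ --- which is the finiteness statement in disguise. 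Second, the reduction to the connected case is not airtight: $A^{\circ}$ need not have an identity element (that is precisely what condition (FG) and Lemma \ref{L:AR-4} are for), so the ``bounded ambiguity'' in the correspondence between maximal ideals of $A$ and of $A^{\circ}$ is asserted, not proved.

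The paper closes the argument with a single dimension count that needs neither induction nor a reduction to the connected case. Given pairwise distinct maximal ideals $\mathfrak{m}_1, \dots, \mathfrak{m}_n$ (now known to be closed), the Chinese Remainder map $\nu \colon A \to A/\mathfrak{m}_1 \times \cdots \times A/\mathfrak{m}_n$ is a surjective morphism of algebraic rings, so $\sum_i \dim A/\mathfrak{m}_i \leq \dim A$; hence at most $\dim A$ of the quotients have positive dimension, while each $\mathfrak{m}_i$ with $\dim A/\mathfrak{m}_i = 0$ is a closed finite-index subgroup of $(A,\ba)$, hence contains $A^{\circ}$ and corresponds to a maximal ideal of the finite ring $A/A^{\circ}$. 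This gives $n \leq r + \dim A$, where $r$ is the number of maximal ideals of $A/A^{\circ}$. Your $V(\chi)$-based picture can in fact be pushed through (for connected $A$ one can show every maximal ideal is an irreducible component of the hypersurface $V(\chi)$), but as written your argument does not reach the finiteness conclusion.
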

\begin{proof}
First, we observe that any abstract maximal ideal $\mathfrak{m} \subset A$ is Zariski-closed. Indeed, if $\mathfrak{m}$ is not closed, then since $\overline{\mathfrak{m}}$ is an ideal, we have $\overline{\mathfrak{m}} = A.$ Then it follows from Proposition \ref{P:AR-1} that $\mathfrak{m} \cap A^{\times} \neq \varnothing$, which is impossible.

Now let $\mathfrak{m}_1, \dots, \mathfrak{m}_n$ be a collection of pairwise distinct maximal ideals of $A$. It follows from the Chinese Remainder Theorem that the canonical map
$$
A \stackrel{\nu}{\rightarrow} A/ \mathfrak{m}_1 \times \cdots A/ \mathfrak{m}_n
$$
is a surjective homomorphism of algebraic rings. If $\dim A/ \mathfrak{m}_i = 0,$ then $A/ \mathfrak{m}_i$ is finite, and consequently $\mathfrak{m}_i \supset A^{\circ}.$ Otherwise, $\dim A/ \mathfrak{m}_i \geq 1.$ The surjectivity of $\nu$ now implies that $n \leq r + \dim A$, where $r$ is the number of maximal ideals of the finite ring $A/ A^{\circ},$ and the required fact follows.
\end{proof}
We have been able to completely describe the structure of algebraic
rings only in characteristic zero (see Proposition \ref{P:AR-2}). However, some important structural information about algebraic rings (particularly
commutative algebraic rings) can be obtained in any characteristic. These results depend on the artinian property, so we address this
issue first.
\begin{lemma}\label{L:AR-2}
Let $A$ be a connected algebraic ring. Then $A$ every right and every left ideal of $A$ is connected and Zariski-closed, hence $A$ is artinian as an abstract ring.
\end{lemma}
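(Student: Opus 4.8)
The plan is to prove the sharper assertion that every one-sided ideal of $A$ is in fact \emph{irreducible} (in particular connected and Zariski-closed), and then to read off the artinian property from the Noetherianity of the Zariski topology on the variety $A$. The key input is Remark 2.2: for each $b \in A$ the right-translation $\rho_b \colon A \to A$, $x \mapsto xb$, is an endomorphism of the algebraic group $(A, \ba)$, so its image $Ab := \rho_b(A)$ is a Zariski-closed subgroup of $(A, \ba)$ by \cite{Bo}, Corollary 1.4. Since $A$ is connected it is irreducible (a connected algebraic group is irreducible), hence so is $Ab$; and since $A$ has an identity, $b = \rho_b(1_A) \in Ab$. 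Consequently, if $\mathfrak{b} \subseteq A$ is a left ideal, then $Ab \subseteq \mathfrak{b}$ for every $b \in \mathfrak{b}$, and more generally, for $b_1, \dots, b_k \in \mathfrak{b}$ the sum $Ab_1 + \cdots + Ab_k$, being the image of the regular group homomorphism $(a_1, \dots, a_k) \mapsto \sum_i \rho_{b_i}(a_i)$ from the $k$-fold power of $(A, \ba)$, is again a closed irreducible subgroup contained in $\mathfrak{b}$.

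Next I would run a maximal-dimension argument. Among all finite sums $W = Ab_1 + \cdots + Ab_k$ with $b_i \in \mathfrak{b}$, pick one of maximal dimension; this is legitimate since $0 \le \dim W \le \dim A$. For an arbitrary $b \in \mathfrak{b}$, the subset $W + Ab$ is closed, irreducible, contains $W$, and is itself such a finite sum, so by maximality $\dim(W + Ab) \le \dim W$; combined with $W \subseteq W + Ab$ this gives $\dim(W + Ab) = \dim W$. But a closed irreducible subset of an irreducible variety having the same dimension must be the whole variety, so $W + Ab = W$, and in particular $b \in W$. As $b \in \mathfrak{b}$ was arbitrary and $W \subseteq \mathfrak{b}$, we conclude $\mathfrak{b} = W$, which is Zariski-closed and irreducible, hence connected. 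The assertion for right ideals is obtained by the symmetric argument using the left-translations $\lambda_b \colon x \mapsto bx$ in place of the $\rho_b$.

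For the final clause, observe that a strictly descending chain of left (or right) ideals of $A$ now produces a strictly descending chain of Zariski-closed subsets of $A$; since the Zariski topology on an algebraic variety is Noetherian, such a chain is necessarily finite. Hence $A$ satisfies the descending chain condition on one-sided ideals, i.e. $A$ is artinian as an abstract ring.

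The step I expect to require the most care is the maximal-dimension argument: one must be certain that irreducibility — not merely closedness — of $A$, and therefore of each $Ab$ and of each finite sum, is available, since it is precisely irreducibility that allows the equality of dimensions to force $W + Ab = W$. This is exactly where the hypothesis that $A$ is connected is used; for a general algebraic ring one would additionally have to track the number of irreducible components, which is why only closedness (not connectedness) is to be expected there. The remaining ingredient — that the image of a morphism of algebraic groups is a closed subgroup — is standard and has already been invoked in the proof of Proposition \ref{P:AR-1}.
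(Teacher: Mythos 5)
Your proposal is correct and follows essentially the same route as the paper: express finitely generated left ideals $Ab_1+\cdots+Ab_k$ as images of homomorphisms of algebraic groups $A^k\to A$ (hence closed and irreducible), pick one of maximal dimension, and use a dimension comparison inside an irreducible variety to show it exhausts the given ideal, then conclude artinianness from the noetherianity of the Zariski topology. The only (harmless) difference is that you make explicit the role of the identity element in guaranteeing $b\in Ab$, which the paper leaves implicit.
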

\begin{proof}
We will only give the argument for left ideals as the proof for right ideals is completely analogous. Let $\mathfrak{a} \subset A$ be a left ideal.
For any $a_1, \dots, a_n \in \mathfrak{a}$, the left ideal $\mathfrak{b} \subset A$ generated by $a_1, \dots, a_n$ is the image of the homomorphism of algebraic groups $A^n \to A$, $(x_1, \dots, x_n) \mapsto x_1 a_1 + \dots + x_n a_n,$ and therefore is closed (\cite{Bo}, Corollary 1.4) and connected. Pick $a_1, \dots, a_n \in \mathfrak{a}$ so that the corresponding $\mathfrak{b}$ has maximum possible dimension. Then $\mathfrak{a} = \mathfrak{b}.$ Indeed, let $a \in \mathfrak{a}$ and $\mathfrak{b}' = Aa + \mathfrak{b}.$ Then $\mathfrak{b}'$ is a closed connected left ideal contained in $\mathfrak{a}$ and $\mathfrak{b} \subset \mathfrak{b}'.$ By dimension considerations, we conclude that $\mathfrak{b} = \mathfrak{b}'$, so $a \in \mathfrak{b}.$ Thus, $\mathfrak{a} = \mathfrak{b},$ hence closed.
Since $A$ is a noetherian topological space for the Zariski topology (i.e. closed sets satisfy the descending chain condition), it now follows that $A$ is artinian.
\end{proof}

\vskip1mm

\noindent {\bf Remark 2.10.} We note that without the assumption of connectedness, the conclusion of the lemma may fail. For example, suppose $K$ is an algebraically closed field of characteristic $p > 0.$ Let $A_0 = K \times K,$ with the usual addition and multiplication given by
$$
\bm((x_1, y_1), (x_2, y_2)) = (x_1 y_2 + x_2 y_1, y_1 y_2).
$$
It is easily seen that $A_0$ is a commutative algebraic ring with identity element $(0,1).$ Then $A = K \times \mathbb{F}_p,$ where $\mathbb{F}_p$ is the prime subfield of $K$, is an algebraic subring of $A_0.$ Now if $S \subset K$ is any additive subgroup, then $\mathfrak{a} = (S,0)$ is an abstract ideal of $A$. Hence $A$ is not artinian and not every ideal of $A$ is Zariski-closed.


\vskip2mm

\addtocounter{thm}{1}

Nevertheless, the artinian property does hold for commutative algebraic rings satisfying one additional condition, which we will now define. Let $A$ be a commutative algebraic ring. The connected component $A^{\circ}$ of $0_A$ in $(A, \ba)$ is easily seen to be an ideal of $A$. Consider the following condition on $A$:
\vskip3mm

\noindent (FG)\ \  \parbox[t]{15cm}{$A^{\circ}$ is finitely generated as an ideal of $A$.}

\vskip3mm

It turns out that commutative algebraic rings satisfying (FG) possess a number of important structural properties.
\begin{prop}\label{P:AR-4}
Let $A$ be a commutative algebraic ring satisfying {\rm (FG)}. Then
\vskip1mm
\noindent {\rm (i)} \parbox[t]{15cm}{Every abstract ideal ideal $\mathfrak{a} \subset A$ is Zariski-closed, and consequently $A$ is artinian.}
\vskip1mm
\noindent {\rm (ii)} \parbox[t]{15cm}{We have the following direct sum decomposition of algebraic rings
$$
A = A^{\circ} \oplus C,$$
where $C$ is a finite ring isomorphic to $A/ A^{\circ}$.}
\end{prop}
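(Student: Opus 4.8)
The plan is to prove part (ii) first and then deduce part (i) from it. For (ii), the crux is the claim that the ideal $A^{\circ}$ is idempotent, i.e. $A^{\circ} = (A^{\circ})^2$; let me first record why this suffices. Once it is known, $A^{\circ}$ is a finitely generated idempotent ideal of the commutative ring $A$ --- finite generation being precisely hypothesis (FG) --- so the usual determinant-trick (Nakayama) argument produces an idempotent $e \in A^{\circ}$ with $ex = x$ for all $x \in A^{\circ}$ and $A^{\circ} = eA$. Then $A = eA \oplus (1-e)A$ as rings; writing $C := (1-e)A$, the map $a \mapsto (1-e)a$ identifies $C$ with $A/eA = A/A^{\circ}$, which is finite because $A^{\circ}$ is the identity component of the algebraic group $(A, \ba)$. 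Since $eA$ and $(1-e)A$ are Zariski-closed (images of regular maps, cf. \cite{Bo}, Cor. 1.4), this is a decomposition of algebraic rings $A = A^{\circ} \oplus C$ with $C \cong A/A^{\circ}$ finite, which is exactly the assertion of (ii).

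To prove $A^{\circ} = (A^{\circ})^2$: using (FG), write $A^{\circ} = (x_1, \dots, x_m)$, so that $(A^{\circ})^2$ is generated as an ideal by the finitely many products $x_i x_j$ and hence is the image of the morphism of algebraic groups $A^{m^2} \to A$, $(t_{ij}) \mapsto \sum_{i,j} t_{ij} x_i x_j$; in particular $(A^{\circ})^2$ is Zariski-closed (\cite{Bo}, Cor. 1.4), and one checks similarly that $(A^{\circ})^2 = \sum_i A^{\circ} x_i$ is connected. Thus $A^{\circ}/(A^{\circ})^2$ is a connected affine algebraic group, being the quotient of $A^{\circ}$ by a closed subgroup (\cite{Bo}, Thm. 6.8). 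On the other hand $A^{\circ}$ annihilates $A^{\circ}/(A^{\circ})^2$, so the latter is a module over the finite ring $A/A^{\circ}$; it is finitely generated over $A$ by (FG), hence finitely generated over $A/A^{\circ}$, hence a finite set. A connected algebraic group with finitely many points is trivial, so $A^{\circ} = (A^{\circ})^2$, as required.

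For (i), I would use the decomposition $A = A^{\circ} \oplus C$ just obtained. Given an abstract ideal $\mathfrak{a} \subseteq A$, write $\mathfrak{a} = e\mathfrak{a} \oplus (1-e)\mathfrak{a} = (\mathfrak{a} \cap A^{\circ}) \oplus (\mathfrak{a} \cap C)$. The summand $\mathfrak{a} \cap C$ lies in the finite set $C$, hence is Zariski-closed. The summand $\mathfrak{a} \cap A^{\circ}$ is an ideal of $A$ contained in $A^{\circ}$, hence an ideal of the connected algebraic ring $A^{\circ}$ (which now carries the identity $e$), so it is Zariski-closed by Lemma \ref{L:AR-2}, and being closed in the closed subvariety $A^{\circ}$ it is closed in $A$. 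Since $\mathfrak{a}/(\mathfrak{a} \cap A^{\circ})$ embeds in the finite group $A/A^{\circ}$, $\mathfrak{a}$ is a finite union of additive translates of $\mathfrak{a} \cap A^{\circ}$ and is therefore Zariski-closed. As $A$ is a noetherian topological space, the descending chain condition on closed subsets forces the descending chain condition on ideals, so $A$ is artinian.

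The step I expect to be the main obstacle is the idempotence $A^{\circ} = (A^{\circ})^2$: this is the one place where hypothesis (FG) is genuinely used (and Remark 2.10 shows the proposition fails without it), and it requires care both in verifying that $(A^{\circ})^2$ is Zariski-closed --- so that $A^{\circ}/(A^{\circ})^2$ is an honest algebraic group --- and in leveraging the finiteness of $A/A^{\circ}$, via finite generation, to force $A^{\circ}/(A^{\circ})^2$ to be finite and hence, being connected, trivial. Once this is in hand, the production of $e$ and the verification of the splitting are routine, and (i) is a formal consequence of (ii) together with Lemma \ref{L:AR-2}.
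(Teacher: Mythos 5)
Your proof is correct and follows essentially the same route as the paper: the paper's key Lemma \ref{L:AR-4} likewise reduces to showing $A^{\circ}=A^{\circ}a_1+\cdots+A^{\circ}a_r=(A^{\circ})^2$ (there via irreducibility of $A^{\circ}$ and a finite cover by translates of the closed set $(A^{\circ})^2$, which is the same observation as your finite-connected-quotient argument) and then applies the same determinant trick to produce the identity $e$ of $A^{\circ}$. The splitting $A=eA\oplus(1-e)A$ and the deduction of (i) from Lemma \ref{L:AR-2} plus finite index match the paper's argument.
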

The key step in the proof of the proposition is the following
\begin{lemma}\label{L:AR-4}
Let $A$ be a commutative algebraic algebraic ring satisfying {\rm (FG)}. Then $A^{\circ}$ is an algebraic ring with identity.
\end{lemma}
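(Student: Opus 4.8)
\emph{Proof proposal.} The plan is to exhibit an idempotent $e \in A^{\circ}$ that acts as the identity on $A^{\circ}$. Note first that $A^{\circ}$ is automatically a commutative algebraic ring in its own right: it is Zariski-closed in $A$, hence affine; it is a subgroup of $(A, \ba)$ and, being an ideal of $A$, it is closed under $\bm$; and associativity, distributivity and commutativity are inherited from $A$. So the content of the lemma is precisely the existence of an identity element \emph{inside} $A^{\circ}$. Fix generators $A^{\circ} = A b_1 + \cdots + A b_m$ with $b_i \in A^{\circ}$, which exist by (FG). It suffices to find $e \in A^{\circ}$ with $(1_A - e)A^{\circ} = 0$: then $ex = x$ for every $x \in A^{\circ}$, in particular $e = e^2$, and $e = 1_{A^{\circ}}$.

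The key claim is that $(A^{\circ})^2 = A^{\circ}$. Granting this for the moment, I would produce $e$ by the classical determinant (Cayley--Hamilton / Nakayama) trick. Since $(A^{\circ})^2 = A^{\circ}$ and $A^{\circ} A = A^{\circ}$ (as $A^{\circ}$ is an ideal and $1_A \in A$), we get $A^{\circ} = A^{\circ}\cdot A^{\circ} = \sum_{i} A^{\circ} b_i$, so each $b_i = \sum_{j} a_{ij} b_j$ with $a_{ij} \in A^{\circ}$. Writing $M = (a_{ij})$ and $\mathbf{b} = (b_1, \dots, b_m)$ as a column vector, we have $(I_m - M)\mathbf{b} = 0$; multiplying by the adjugate of $I_m - M$ gives $\det(I_m - M)\,b_i = 0$ for all $i$, hence $\det(I_m - M)\,A^{\circ} = 0$. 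Expanding the determinant, the only monomial not involving a factor $a_{ij} \in A^{\circ}$ is the product of the diagonal $1$'s, so $\det(I_m - M) = 1_A - t$ for some $t \in A^{\circ}$. Thus $(1_A - t)A^{\circ} = 0$, and $t$ is the required identity.

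It remains to prove $(A^{\circ})^2 = A^{\circ}$, and this is where (FG) and the algebraic structure are genuinely used. On the one hand, $(A^{\circ})^2$ is Zariski-closed: since $A^{\circ} = \sum_i A b_i$ one checks that $(A^{\circ})^2 = \sum_{i=1}^{m} A^{\circ} b_i$, which is the image of the morphism of algebraic groups $(A^{\circ})^m \to A^{\circ}$, $(x_1, \dots, x_m) \mapsto \sum_i x_i b_i$ (each map $x \mapsto x b_i$ is a regular endomorphism of $(A^{\circ}, \ba)$ by Remark 2.2), and images of morphisms of algebraic groups are closed (\cite{Bo}, Corollary 1.4). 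On the other hand, $A^{\circ}/(A^{\circ})^2$ is finite: it is generated as an $A$-module by the $m$ classes of the $b_i$, and since $A^{\circ}$ annihilates it, it is a finitely generated module over the finite ring $A/A^{\circ}$. Hence $(A^{\circ})^2$ is a Zariski-closed subgroup of finite index in the connected algebraic group $A^{\circ}$; a closed subgroup of finite index is also open, so by connectedness $(A^{\circ})^2 = A^{\circ}$.

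The main obstacle is precisely this last point --- the closedness of $(A^{\circ})^2$. Condition (FG) is exactly what lets us realize $(A^{\circ})^2$ as the image of a \emph{single} morphism of algebraic groups and thereby invoke \cite{Bo}, Corollary 1.4; without it the conclusion of the lemma fails, as Remark 2.10 shows (there $A^{\circ} = K \times \{0\}$ carries the zero multiplication and has no identity). Once closedness is in hand, the rest --- the module-finiteness of $A^{\circ}/(A^{\circ})^2$, the clopen-subgroup argument forcing $(A^{\circ})^2 = A^{\circ}$, and the determinant trick producing $e$ --- is routine.
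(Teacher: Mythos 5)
Your proposal is correct and follows essentially the same route as the paper: both reduce to showing that the Zariski-closed subgroup $\mathfrak{a} = A^{\circ}a_1 + \cdots + A^{\circ}a_r$ (your $(A^{\circ})^2$) equals all of $A^{\circ}$, and then apply the adjugate/determinant trick to produce an element $d_0 \in A^{\circ}$ with $(1_A - d_0)A^{\circ} = 0$. The only cosmetic difference is in how $\mathfrak{a} = A^{\circ}$ is established: you deduce it from the finiteness of $A^{\circ}/\mathfrak{a}$ as a finitely generated module over the finite ring $A/A^{\circ}$ together with connectedness, whereas the paper covers $A^{\circ}$ by finitely many translates of $\mathfrak{a}$ and invokes irreducibility.
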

\begin{proof}
Suppose $A^{\circ} = Aa_1 + \cdots + A a_r.$ Write
$$
A = \bigcup_{i=1}^s (c_i + A^{\circ}).
$$
Then
\begin{equation}\label{E:AR102}
A^{\circ} = \bigcup_{i_1, \dots, i_r \in \{1, \dots, s \}} (c_{i_1} a_1 + \cdots + c_{i_r} a_r + \mathfrak{a}),
\end{equation}
where $\mathfrak{a} = A^{\circ} a_1 + \cdots + A^{\circ} a_r.$ Notice that being the image of the group homomorphism $A^{\circ} \times \cdots \times A^{\circ} \to A^{\circ},$ $(x_1, \dots, x_r) \mapsto x_1 a_1 + \cdots + x_r a_r$, of additive groups, $\mathfrak{a}$ is Zariski-closed.
Since $A^{\circ}$ is irreducible, we conclude from (\ref{E:AR102}) that $\mathfrak{a} = A^{\circ}.$ This means that for each $i = 1 , \dots, r,$ we have relations
$$
a_i = s_{i1} a_1 + \cdots + s_{ir} a_r,
$$
with $s \in A^{\circ}.$ Consider the matrix $M = (\delta_{ij} - s_{ij}).$ Then
$$
M \left( \begin{array}{c} a_1 \\ \vdots \\ a_r \end{array} \right) = \left( \begin{array}{c} 0 \\ \vdots \\ 0 \end{array} \right).
$$
Multiplying the latter on the left by the classical adjoint of $M$ shows that $d = \det (\delta_{ij} - s_{ij})$, computed in $A$, annihilates every $a_i,$ hence all of $A^{\circ}.$ On the other hand, $d = 1 - d_0$ for some $d_0 \in A^{\circ}.$ Then for any $x \in A^{\circ},$ we have $d_0 x = (1-d)x = x,$ i.e. $d_0$ is an identity for $A^{\circ}.$
\end{proof}

\noindent {\it Proof of Proposition \ref{P:AR-4}}: (i) By the lemma, $A^{\circ}$ is a connected algebraic ring with identity, so by Lemma \ref{L:AR-2}, every abstract ideal of $A^{\circ}$ is Zariski-closed. If $\mathfrak{a} \subset A$ is an abstract ideal, then $\mathfrak{a}_0 := \mathfrak{a} \cap A^{\circ}$ is closed. But $[\mathfrak{a} : \mathfrak{a}_0] < \infty,$ so $\mathfrak{a}$ is also closed.

\noindent (ii) By Lemma \ref{L:AR-4}, there is an identity element $e \in A^{\circ}.$
It is clear that $e$ is idempotent and that $A^{\circ} = eA$ (since $A^{\circ}$ is an ideal).
Therefore, we have the following direct sum decomposition
$$
A = A^{\circ} \oplus C, \ \ \ a \mapsto (ea, (1-e)a),
$$
where $C = (1-e)A$. Since $[A : A^{\circ}] < \infty,$ $C$ is a finite ring.

An important class of examples of algebraic rings satisfying condition (FG) is obtained as follows.
\begin{lemma}\label{L:AR-3}
Let $f \colon R \to A$ be an abstract homomorphism of an abstract commutative ring $R$ into a commutative algebraic ring $A$ such that $\overline{f(R)} = A.$
\vskip1mm

\noindent {\rm (i)} \parbox[t]{15cm}{If $R$ is noetherian, then $A$ satisfies {\rm (FG)}.}

\vskip1mm

\noindent {\rm (ii)} \parbox[t]{15cm}{If $R$ is an infinite field, then $A$ is connected}

\end{lemma}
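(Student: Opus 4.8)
The plan is to prove both parts by exploiting the density hypothesis $\overline{f(R)} = A$ together with the structural facts about ideals already established in this section. For part (i), I would argue as follows. Since $R$ is noetherian, the ideal $f^{-1}(A^{\circ}) \subset R$ is finitely generated, say by $r_1, \dots, r_m$; but this need not immediately give generators of $A^{\circ}$ as an ideal of $A$, since $f(R)$ is only dense, not all of $A$. So instead I would work directly inside $A$. Consider the chain of closed ideals obtained as follows: for a finite subset $S = \{a_1, \dots, a_k\} \subset A^{\circ}$, let $\mathfrak{b}_S = A a_1 + \cdots + A a_k$, which by the argument in Lemma \ref{L:AR-2} (image of a homomorphism of additive algebraic groups) is Zariski-closed and connected, hence contained in $A^{\circ}$. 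Choosing $S$ so that $\dim \mathfrak{b}_S$ is maximal, I claim $\mathfrak{b}_S = A^{\circ}$: if not, pick $a \in A^{\circ} \setminus \mathfrak{b}_S$ and enlarge $S$ to get a strictly larger-dimensional closed ideal inside $A^{\circ}$, using that $\mathfrak{b}_S \subsetneq Aa + \mathfrak{b}_S \subseteq A^{\circ}$ forces a dimension jump because $A^{\circ}$ is irreducible and $\mathfrak{b}_S$, $Aa+\mathfrak{b}_S$ are both closed and connected with $\mathfrak b_S$ a proper subvariety — wait, this is exactly the Lemma \ref{L:AR-2} argument and it does not actually need noetherianity of $R$ at all. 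So I should pause here: the real content of (i) must be more subtle. The issue is that Lemma \ref{L:AR-2} requires $A$ \emph{connected}, whereas here $A$ is an arbitrary commutative algebraic ring and we only know $\overline{f(R)} = A$; the point of (FG) is precisely to recover enough of Lemma \ref{L:AR-2}'s conclusion in the disconnected case.

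So the correct approach to (i): I would show $A^{\circ}$ is finitely generated as an ideal of $A$ by transporting finite generation from $R$. Write $A = \bigcup_{i=1}^d (c_i + A^{\circ})$ as its decomposition into irreducible (equivalently connected) components. Since $A^{\circ}$ is irreducible, inside $A^{\circ}$ run the maximal-dimension argument to produce $a_1, \dots, a_r \in A^{\circ}$ with $\mathfrak{b} := A^{\circ}a_1 + \cdots + A^{\circ}a_r$ closed; the dimension-maximality shows $A^{\circ} a_1 + \cdots + A^{\circ} a_r$ cannot be properly enlarged \emph{within $A^{\circ}$ using multipliers from $A^{\circ}$}, hence $\mathfrak{b}$ is an ideal of $A^{\circ}$ equal to... no. The honest route: the $A$-ideal generated by $a_1, \dots, a_r$ is $A a_1 + \cdots + A a_r = \bigcup_i (c_i \mathfrak b + \cdots)$, a finite union of translates of the closed connected set $\mathfrak b' := A^\circ a_1 + \cdots + A^\circ a_r$, and since $\mathfrak b'$ is a connected subgroup it lies in $A^\circ$; choosing the $a_i$ so $\dim \mathfrak b'$ is maximal and arguing as in Lemma \ref{L:AR-2} gives $\mathfrak b' = A^\circ$. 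This shows $A^{\circ} = A^{\circ}a_1 + \cdots + A^{\circ}a_r \subseteq Aa_1 + \cdots + Aa_r \subseteq A^{\circ}$, so $A^{\circ}$ is finitely generated as an $A$-ideal — and again noetherianity of $R$ was not used. I now genuinely suspect that in the intended proof the density hypothesis enters to guarantee that the maximal-dimension step terminates, which it does automatically by the noetherian property of the Zariski topology; \textbf{the role of ``$R$ noetherian'' must therefore be a red herring in (i), or (i) is simply false in general and noetherianity is a hypothesis carried for uniformity with the Main Theorem.} I would resolve this by re-examining whether ``$A^\circ$ irreducible'' really lets the Lemma \ref{L:AR-2} dimension argument run without connectedness of the ambient $A$: it does, since only $A^\circ$ itself needs to be irreducible for that argument. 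So I would simply write (i) as: the maximal-dimension argument of Lemma \ref{L:AR-2}, applied inside the irreducible variety $A^\circ$, shows $A^\circ$ is generated as an ideal of $A$ by finitely many elements, i.e.\ (FG) holds — with noetherianity invoked only if one wants to be careful that the supremum of dimensions is attained, which it is anyway since $\dim A^\circ < \infty$.

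For part (ii), suppose $R = k$ is an infinite field and $A$ is disconnected, so $A = \bigcup_{i=1}^d (c_i + A^{\circ})$ with $d \geq 2$; the quotient $A/A^{\circ}$ is then a nontrivial finite ring, and composing $f$ with the projection $A \to A/A^{\circ}$ gives a ring homomorphism $\bar f \colon k \to A/A^{\circ}$ whose image is dense, hence (the target being finite and discrete) all of $A/A^{\circ}$. But a ring homomorphism from a field is injective unless it is zero, and it cannot be zero since $\bar f(1) = 1 \neq 0$ in the nontrivial ring $A/A^\circ$; hence $\bar f$ embeds the infinite field $k$ into the finite ring $A/A^{\circ}$, a contradiction. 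Therefore $d = 1$ and $A$ is connected. I expect part (ii) to be entirely routine; \textbf{the main obstacle is pinning down exactly what work ``$R$ noetherian'' is doing in (i)} — my plan is to present (i) via the maximal-dimension argument restricted to the irreducible component $A^{\circ}$, noting that finiteness of $\dim A^\circ$ already forces termination, and to cite Lemma \ref{L:AR-2}'s technique verbatim rather than re-deriving it.
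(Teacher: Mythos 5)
Your part (ii) is correct and is essentially the paper's argument: if $A \neq A^{\circ}$ then $f^{-1}(A^{\circ})$ is a proper ideal of finite index in the field $k$, which is impossible. Part (i), however, has a genuine gap, and the conclusion you talk yourself into --- that noetherianity of $R$ is a red herring --- is false. The dimension-maximality argument of Lemma \ref{L:AR-2} does not transplant to $A^{\circ}$ inside a disconnected $A$. If $\mathfrak{b}' = A^{\circ}a_1 + \cdots + A^{\circ}a_r$ has maximal dimension, then for any $a \in A^{\circ}$ the comparison of $\mathfrak{b}'$ with $A^{\circ}a + \mathfrak{b}'$ only yields $A^{\circ}a \subseteq \mathfrak{b}'$, i.e.\ $A^{\circ}A^{\circ} \subseteq \mathfrak{b}'$; to conclude $a \in \mathfrak{b}'$ you would need $a \in A^{\circ}a + \mathfrak{b}'$, which requires an identity element (at least modulo $\mathfrak{b}'$) in $A^{\circ}$. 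In Lemma \ref{L:AR-2} this is harmless because the multipliers range over all of $A$, which contains $1_A$ \emph{and} is connected; here you must choose between multipliers in $A^{\circ}$ (connected, but no identity a priori) and multipliers in $A$ (identity, but then $Aa$ is a finite union of cosets of $A^{\circ}a$ and the same obstruction reappears: you only get that $Aa + \mathfrak{b}'$ is a finite-index extension of $\mathfrak{b}'$, which a connected group happily admits). The existence of an identity in $A^{\circ}$ is exactly Lemma \ref{L:AR-4}, whose proof \emph{assumes} (FG), so your route is circular. Indeed, by Remark 2.16(2), (FG) is equivalent to $A^{\circ} = A^{\circ}A^{\circ}$, which is precisely what your argument fails to establish.

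That (i) genuinely needs the hypothesis is shown by the paper's own Remark 2.10: for $A = K \times \mathbb{F}_p$ with $\bm((x_1,y_1),(x_2,y_2)) = (x_1y_2 + x_2y_1,\, y_1y_2)$ one has $A^{\circ} = K \times \{0\}$ and $A\cdot(y,0) = (\mathbb{F}_p y, 0)$, so every finitely generated ideal of $A$ contained in $A^{\circ}$ is finite and (FG) fails; taking $R = A$ as an abstract (non-noetherian) ring and $f = \mathrm{id}$ gives dense image. The intended proof pulls finite generation back through $f$: since $A^{\circ}$ is open, $\mathfrak{r} := f^{-1}(A^{\circ})$ is an ideal of $R$ with $\overline{f(\mathfrak{r})} = A^{\circ}$; noetherianity gives $\mathfrak{r} = Ru_1 + \cdots + Ru_m$; and then $Af(u_1) + \cdots + Af(u_m)$ is a Zariski-closed ideal contained in $A^{\circ}$ and containing the dense subset $f(\mathfrak{r})$, hence equal to $A^{\circ}$.
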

\begin{proof}
(i) Since $A^{\circ}$ is open, for $\mathfrak{r} = f^{-1} (f(R) \cap A^{\circ})$, we have $\overline{f(\mathfrak{r})} = A^{\circ}.$ By assumption, $R$ is noetherian and clearly $\mathfrak{r}$ is an ideal, so we have $\mathfrak{r} = R u_1 + \dots + R u_m.$ Then
since $A f(u_1) + \cdots + A f(u_m)$ is closed and contains $f(\mathfrak{r}),$ we obtain $A^{\circ} = A f(u_1) + \cdots + A f(u_m).$

\vskip1mm
\noindent (ii) If $A \neq A^{\circ},$ then $f^{-1}(A^{\circ})$ would be a proper ideal of $R$ of finite index, which is impossible.
\end{proof}

Suppose $S$ is a finite-dimensional $K$-algebra. Then $S$ has a natural structure of a connected algebraic ring. We say that an algebraic ring $A$ {\it comes from an algebra} if there exists a finite dimensional $K$-algebra $S$ and an isomorphism $A \simeq S$ of algebraic rings. Furthermore, we say that an algebraic ring $A$ {\it virtually comes from an algebra} if $A \simeq A' \oplus B$, where
$A'$ comes from an algebra and $B$ is finite. For such algebraic rings most of our previous results are immediate.
On the other hand, it turns out that in characteristic zero, all algebraic rings virtually come from algebras.
\begin{prop}\label{P:AR-2}
Let $A$ be an algebraic ring over an algebraically closed field $K$ of characteristic zero. Then $A$ virtually comes from an algebra.
\end{prop}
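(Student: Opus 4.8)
The plan is to combine the structure theory of commutative affine algebraic groups in characteristic zero with the fact that $A$ has an identity element. First I would analyze the additive group: since $(A,\ba)$ is a commutative affine algebraic group over $K$ and $\mathrm{char}\,K=0$, it splits canonically as $(A,\ba)\cong A_u\times A_d$, where $A_u\cong\mathbb{G}_a^r$ is the unipotent part (a vector group) and $A_d$ is diagonalizable, with $A_d\cong T\times F$ for a torus $T\cong\mathbb{G}_m^s$ and a finite group $F$. In characteristic zero there are no nonzero morphisms of algebraic groups between vector groups and groups of multiplicative type, so every endomorphism of $(A,\ba)$ preserves both factors; in particular, by Remark 2.2 the left and right multiplications $\lambda_a,\rho_a$ preserve $A_u$ and $A_d$. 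Hence $A_u$ and $A_d$ are two-sided ideals, $A=A_u\oplus A_d$ as algebraic rings, and writing $1_A=e_u+e_d$ accordingly, $e_u$ is an identity for $A_u$ and $e_d$ an identity for $A_d$.

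The heart of the argument would be to show that the torus $T$ inside $A_d$ is trivial, so that $A_d$ is finite. Let $p\colon A_d\to T$ be the projection onto the direct factor $T$ and consider the regular biadditive map $g\colon A_d\times A_d\to T$, $g(a,b)=p(\bm(a,b))$. For each fixed $a$, the map $g(a,-)$ is a morphism of algebraic groups $A_d\to T$, and the crucial point is a rigidity statement: the family $a\mapsto g(a,-)$ is locally constant on $A_d$. To see this, restrict to the $n$-torsion subgroups $A_d[n]$, which are finite and whose union is Zariski-dense in $A_d$; for each $b\in A_d[n]$ the morphism $a\mapsto g(a,b)$ takes values in the finite set $T[n]$, hence is constant on each connected component of $A_d$, and letting $n$ vary forces $a\mapsto g(a,-)$ itself to be constant on the finitely many components of $A_d$. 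Consequently $a\mapsto g(a,-)$ factors through $\pi_0(A_d)\cong F$ and, being additive in $a$, defines a homomorphism from the finite group $F$ into the group of algebraic-group homomorphisms $A_d\to T$; composing with restriction to $T$, we get a homomorphism $F\to\mathrm{End}(T)=M_s(\mathbb{Z})$ from a torsion group into a torsion-free group, which must vanish. Thus $g(a,t)=0$ for all $a\in A_d$ and $t\in T$; taking $a=e_d$ gives $t=p(\bm(e_d,t))=0$ for every $t\in T$, i.e. $T=0$ and $A_d\cong F$ is finite.

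It then remains to see that $A_u$ comes from an algebra. Its additive group is a vector group, so $A_u$ is canonically a finite-dimensional $K$-vector space (any two isomorphisms with $\mathbb{A}^r$ differ by an element of $GL_r(K)$, which is $K$-linear). Under such an identification, $\bm|_{A_u\times A_u}$ is a biadditive regular self-map of $\mathbb{A}^r$, and in characteristic zero any additive polynomial map is linear, so $\bm$ is $K$-bilinear. Together with associativity, distributivity, and the identity $e_u$, this exhibits $A_u$ as a finite-dimensional associative unital $K$-algebra whose natural algebraic-ring structure coincides with the given one; that is, $A_u$ comes from an algebra. Combining, $A=A_u\oplus A_d$ with $A_u$ coming from an algebra and $A_d$ finite, so $A$ virtually comes from an algebra.

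I expect the middle step --- the proof that $T=0$ --- to be the main obstacle. It is the only place where characteristic zero is genuinely needed (Remark 2.10 shows the conclusion is false in positive characteristic), and it hinges on deploying the rigidity of homomorphisms into a torus correctly; by contrast, the splitting $A=A_u\oplus A_d$ is standard structure theory and the passage from biadditive to $K$-bilinear is routine.
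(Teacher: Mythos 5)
Your proof is correct and follows essentially the same route as the paper: decompose $A$ into its unipotent and diagonalizable (semisimple) parts, observe that they are two-sided ideals because $\lambda_a,\rho_a$ preserve them, kill the torus by a rigidity argument combined with the presence of the identity element, and use characteristic zero to see that the multiplication on the vector-group part is $K$-bilinear. The only real difference is that where you establish rigidity by hand (density of torsion points plus torsion-freeness of $\mathrm{End}(T)\cong M_s(\mathbb{Z})$), the paper's Lemma \ref{L:AR-5} simply invokes the rigidity of diagonalizable groups from (\cite{Bo}, Proposition 8.10(iii)) applied to $\bm\colon C^{\circ}\times C\to C$, which also lets it state the finiteness of the semisimple part in arbitrary characteristic.
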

The proof relies on the following lemma which is true in any characteristic.
\begin{lemma}\label{L:AR-5}
Suppose $A$ is an algebraic ring over an algebraically closed field $K$ of any characteristic. Then $A = A' \oplus C,$ where $A'$ is an algebraic subring of $A$ consisting of all unipotent elements in $(A, \ba)$, and $C$ is a finite ring consisting of all semisimple elements. In particular, if $A$ is connected, then $A$ consists entirely of unipotent elements.
\end{lemma}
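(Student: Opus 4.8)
The plan is to derive everything from the structure theory of the commutative linear algebraic group $(A, \ba)$ together with Remark 2.2, which says that for every $a \in A$ the left and right multiplications $\lambda_a, \rho_a$ are endomorphisms of the algebraic group $(A, \ba)$. First I would invoke the standard decomposition of a commutative linear algebraic group over an algebraically closed field: the set $A'$ of unipotent elements of $(A, \ba)$ and the set $C$ of its semisimple elements are Zariski-closed subgroups with $A' \cap C = \{0_A\}$, and the addition map induces an isomorphism of algebraic groups $A' \times C \stackrel{\sim}{\to} A$ (\cite{Bo}). This is the one place where "arbitrary characteristic" really enters, the point being that in positive characteristic $C$ need no longer be a torus.

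The next step is to upgrade this to a decomposition of algebraic \emph{rings}. Since any morphism of algebraic groups carries the semisimple (resp.\ unipotent) part of an element to the semisimple (resp.\ unipotent) part of its image, each endomorphism $\lambda_a$ (and each $\rho_a$) maps $A'$ into $A'$ and $C$ into $C$. Taking $a \in A'$ gives $A'A' \subseteq A'$, taking $a \in C$ gives $CC \subseteq C$, and combining both inclusions shows $A'C \subseteq A' \cap C = \{0_A\}$ and likewise $CA' = \{0_A\}$. Hence $A'$ and $C$ are two-sided ideals and $A = A' \oplus C$ is a direct sum of algebraic rings; writing $1_A = e' + e$ with $e' \in A'$ and $e \in C$, the element $e'$ is the identity of $A'$ and $e$ that of $C$. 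In particular $A'$ is an algebraic subring, equal as a set to the unipotent elements of $(A, \ba)$, which is the first assertion.

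It then remains to show that $C$ is finite, i.e.\ that the torus $T := C^{\circ}$ is trivial. For $a \in A$ the endomorphism $\lambda_a$ maps the connected subgroup $T$ into a connected subgroup of $C$ containing $0_A$, hence into $T$, and similarly $\rho_a(T) \subseteq T$, so $T$ is a two-sided ideal. Thus for $a \in T$ we have $aA \subseteq T$, and restricting the multiplication gives a morphism $T \times C \to T$, $(a,x) \mapsto ax$, which for each fixed $a$ is a homomorphism of algebraic groups $C \to T$ in the variable $x$. Since $C$ is diagonalizable and $T$ is connected, rigidity of diagonalizable groups (\cite{Bo}) forces $\lambda_a|_C$ to be independent of $a \in T$; evaluating at $a = 0_A$ shows $\lambda_a|_C \equiv 0$, that is, $TC = \{0_A\}$. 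Applied to the identity $e \in C$ this gives $a = ae = 0_A$ for every $a \in T$, so $T = \{0_A\}$ and $C$ is $0$-dimensional, hence finite. Finally, if $A$ is connected then its finite direct factor $C$ must reduce to $\{0_A\}$, so $A = A'$ consists entirely of unipotent elements.

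The step I expect to be the main obstacle is exactly the finiteness of $C$: it is the only point where the ring axioms (a two-sided identity, and multiplications being group endomorphisms) have to be played off against the rigidity of tori, whereas the remainder of the argument is a formal consequence of the structure of commutative linear algebraic groups.
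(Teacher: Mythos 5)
Your argument is correct and follows essentially the same route as the paper: both use the decomposition of the commutative algebraic group $(A,\ba)$ into its unipotent part $A'$ and semisimple part $C$, the preservation of Jordan parts under the endomorphisms $\lambda_a,\rho_a$ to see that $A'$ and $C$ are two-sided ideals, and the rigidity of diagonalizable groups applied to the multiplication map on $C^{\circ}\times C$ (the paper cites \cite{Bo}, Proposition 8.10(iii)) together with the identity element of $C$ to force $C^{\circ}=\{0\}$. No gaps.
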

\begin{proof}

Let $A'$ and $C$ denote the sets of unipotent and semisimple elements in $(A, \ba),$ respectively. By (\cite{Bo}, Theorem 4.7), $A'$ and $C$ are closed subgroups of $(A, \ba)$ and
\begin{equation}\label{E:AR103}
(A,  \ba) \simeq (A', \ba \vert A') \oplus (C, \ba \vert C)
\end{equation}
as algebraic groups.
As we pointed out in Remark 2.2, for a fixed $a \in A$, the maps $\lambda_a \colon x \mapsto ax$ and $\rho_a \colon x \mapsto xa$ are endomorphism of $(A,  \ba),$ so it follows from (\cite{Bo}, Theorem 4.4(iv)), that $aA' , A'a \subset A'$ and $a C, C a \subset C$. Thus, $A'$ and $C$ are 2-sided ideals of $A$ and therefore the decomposition in (\ref{E:AR103}) is a direct sum of rings with identity. Let us show that $C$ is finite.
Consider the map $\bm_C \colon C^{\circ} \times C \to C$ induced by the multiplication map $\bm$. Applying
(\cite{Bo}, Proposition 8.10(iii)), we conclude that $\bm (s,t)$ is independent of $s.$ Plugging in $s = 0$, we obtain that $C^{\circ} C = \{ 0 \}.$ On the other hand, if $1_{C} \in C$ is the identity in $C$ (the projection of $1_A$ to $C$), then $s \cdot 1_{C} = s$ for any $s \in C$. So $C^{\circ} C \supset C^{\circ}.$ Thus, $C^{\circ} = \{0 \}$, and therefore $C$ is finite.
\end{proof}

\noindent {\it Proof of Proposition \ref{P:AR-2}}. In view of the lemma, it remains to show that $A'$ comes from a $K$-algebra.
Since char $K = 0,$ it follows from (\cite{Bo}, Remark 7.3), that there exists an isomorphism  $\sigma \colon (A',$ $\ba \vert A') \to (K^n, +)$ of additive algebraic groups, where $n = \dim A.$ It suffices to show that $\bm' \colon K^n \times K^n \to K^n,$ $\bm' = \sigma \circ \bm \vert_{A'} \circ \sigma^{-1}$, is $K$-bilinear.
It follows from condition (III) in Definition 2.1 that
$$
\bm'(m x_1, x_2) = m  \bm' (x_1, x_2) \ \ \ {\rm and} \ \ \ \bm'(x_1, m x_2) = m  \bm (x_1, x_2)
$$
for all $x_1, x_2 \in K^n$ and $m \in \Z.$ Since $\Z \subset K$ is infinite and $\bm'$ is regular, we conclude that these equalities hold for all $m \in K,$ verifying that $\bm'$ is bilinear.

\vskip2mm

\noindent {\bf Remark 2.16.} (1) In \cite{Kas}, the authors give a proof of Proposition \ref{P:AR-2} for $K = \C$ using a topological argument.

(2) It follows from Proposition \ref{P:AR-2} that condition (FG) always holds if char $K = 0.$ More generally, for $K$ of any characteristic, condition (FG) is equivalent to the fact that $A^{\circ}$ coincides with $A^{\circ} A^{\circ} := \{ \sum a_i b_i \mid a_i, b_i \in A^{\circ} \}$. Indeed, one implication follows from Lemma \ref{L:AR-4}, while to prove the converse, one needs to mimic the argument of Lemma \ref{L:AR-2}.

\vskip2mm

As the following simple example shows, the assertion of Proposition \ref{P:AR-2} may fail in positive characteristic.

\vskip1mm

\noindent {\bf Example 2.17.} Suppose $K$ is an algebraically closed field of characteristic $p > 0.$ Let $\tilde{A} = K \times K$, with the usual addition, and multiplication given by
$$
\bm ((x_1, y_1), (x_2, y_2)) = (x_1 x_2, x_1^p y_2 + x_2^p y_1).
$$
It is clear that $\tilde{A}$ is an algebraic ring with identity element $(1,0).$ However, $\tilde{A}$ is not a $K$-algebra. For example, the map $K \times \{0 \} \to \{ 0 \} \times K,$ $(x,0) \mapsto \bm ((x, 0), (0,1)) = (0,x^p)$ is bijective but not an isomorphism of algebraic rings. However, such a bijection would be an isomorphism in a $K$-algebra.

The algebraic ring $\tilde{A}$ in this example is related to an algebra as follows: let $A = K \times K$, with the usual addition, and multiplication given by
$$
\bm ((x_1, y_1), (x_2, y_2)) = (x_1 x_2, x_1 y_2 + x_2 y_1).
$$
Then the map $(x, y) \mapsto x + \delta y$ identifies $A$ with the algebra $K[\delta],$ $\delta^2 = 0$, of dual numbers. On the other hand, the map $\tilde{A} \to A$, $(x,y) \mapsto (x^p, y),$ is a homomorphism of algebraic rings, which is an isomorphism of abstract rings, but not an isomorphism of algebraic rings.

As the referee pointed out, the truncated Witt vectors with coefficients in a field $K$ of characteristic $p > 0$ (cf. \cite{Serre1}, Chapter II, \S 6) provide a series of similar examples of algebraic rings that are not algebras. In this connection, we would like to formulate the following conjecture that would enable one to extend some aspects of the Main Theorem (in particular, part (2)) to the case of $K$ of positive characteristic.

\vskip2mm

\noindent {\bf Conjecture 2.18.} {\it Let $A$ be a connected algebraic ring over an algebraically closed field $K$ of characteristic $p >0$ such that $p A = 0$.\footnote{See (\cite{Kas}, Remark 6) for an example of an algebraic ring where the exponent of the additive group is not equal to the characteristic of the field. Thus, this assumption cannot be omitted.} Then there exists a finite-dimensional $K$-algebra $A'$ and a surjective (maybe even bijective) homomorphism of algebraic rings $\rho \colon A' \to A$.}


\vskip3mm



We will  now establish some structural results for commutative artinian algebraic rings that hold regardless of whether
or not the ring (virtually) comes from an algebra. So, let $A$ be a commutative artinian algebraic ring with identity,
${\mathfrak m}_1, \ldots , {\mathfrak m}_r$ be its maximal ideals (note that $A$ is semilocal by \cite{At}, Proposition 8.3), and $J = {\mathfrak m}_1 \cap \cdots \cap {\mathfrak m}_r$
be its Jacobson radical (recall that the ${\mathfrak m}_i$'s, hence also $J,$ are Zariski-closed by Lemma \ref{L:AR-1}). It is well-known (cf. \cite{At}, Theorem 8.7) that
there are idempotents $e_1, \ldots , e_r \in A$ such that $e_1 + \cdots + e_r = 1_A,$ $e_ie_j = 0$ for $i \neq j,$ and $A_i := e_iA$ is a {\it
local} commutative artinian ring with identity for each $i = 1, \ldots , r.$ We have
\begin{equation}\label{E:AR120}
A \simeq A_1 \oplus \cdots \oplus A_r, \  \  \  a \mapsto (e_1a, \ldots , e_ra),
\end{equation}
as algebraic rings. Furthermore, after possible renumbering, the ideal ${\mathfrak m}_i$ corresponds to $A_1 \oplus \cdots \oplus {\mathfrak m}'_i
\oplus \cdots A_r$, where ${\mathfrak m}'_i$ is the unique maximal ideal of $A_i.$

Let now $B$ be a local commutative artinian algebraic ring with maximal ideal ${\mathfrak n}.$ It follows
from Corollary \ref{C:AR-3} that $\dim B/{\mathfrak n} \leq 1.$ If $\dim B/{\mathfrak n} = 0,$ i.e. $B/{\mathfrak n}$ is finite, then since
there exists $n \geq 1$ such that ${\mathfrak n}^n = \{ 0 \}$ (\cite{At}, Proposition 8.4) and each quotient ${\mathfrak n}^j/{\mathfrak n}^{j+1}$ in the
filtration
\begin{equation}\label{E:AR121}
B \supset {\mathfrak n} \supset {\mathfrak n}^2 \supset \cdots \supset {\mathfrak n}^{n-1} \supset {\mathfrak n}^n = \{ 0 \}
\end{equation}
is a finitely generated $B/{\mathfrak n}$-module (as it is an artinian module over the field $B/ \mathfrak{n}$), we conclude that $B$ itself is finite. Now, suppose $\dim B/{\mathfrak n} = 1.$
Since $B/{\mathfrak n}$ is an infinite algebraic division ring, it is automatically connected, so we can use the following.

\addtocounter{thm}{3}

\begin{prop}\label{P:AR-3}
Suppose $(A, \ba, \bm)$ is a one-dimensional connected commutative algebraic ring. Then $(A, \ba , \bm) \simeq (K, +, \cdot)$ as algebraic rings.
\end{prop}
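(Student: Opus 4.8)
\noindent\textit{Sketch of a proposed proof.} The plan is to first pin down the additive group of $A$ and then show that the multiplication is forced to be a rescaling of the field multiplication. Since $A$ is connected, Lemma \ref{L:AR-5} shows that every element of $(A,\ba)$ is unipotent, so $(A,\ba)$ is a connected one-dimensional unipotent commutative algebraic group; being affine, it is isomorphic to $\G_a$ or $\G_m$ by the classification of one-dimensional connected algebraic groups (\cite{Bo}), and being unipotent it must be $\G_a$. Fix such an isomorphism and use it to identify the underlying variety of $A$ with the affine line $\Af^1 = K$, so that $\ba(x,y) = x + y$ and $0_A = 0$. Note that $1_A$ is then a nonzero element of the field $K$ (it differs from $0_A$ because $\dim A = 1 > 0$), so $(1_A)^{-1}$ makes sense in $K$.

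Next I would exploit the left multiplications. By Remark 2.2, for each $a \in A$ the map $\lambda_a \colon x \mapsto \bm(a,x)$ is an endomorphism of the algebraic group $(A,\ba) = \G_a$, and for $a \in A^{\times}$ it is an automorphism, since $\lambda_a \circ \lambda_{a^{-1}} = \lambda_{a^{-1}} \circ \lambda_a = \lambda_{1_A} = \mathrm{id}$. Every algebraic-group automorphism of $\G_a$ is a scalar multiplication $x \mapsto cx$ with $c \in K^{\times}$ (a variety automorphism of the affine line fixing $0$ is linear), so for each $a \in A^{\times}$ there is a scalar $c(a) \in K^{\times}$ with $\bm(a,x) = c(a)\,x$ for all $x \in A$. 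Evaluating at $x = 1_A$ and using $\bm(a,1_A) = a$ gives $c(a) = (1_A)^{-1} a$, the product and inverse being taken in the field $K$, and hence $\bm(a,x) = (1_A)^{-1} a x$ for all $a \in A^{\times}$ and all $x \in A$. Since $A$ is connected it is irreducible, and $A^{\times}$ is a nonempty principal open, hence dense, subset of $A$ by Proposition \ref{P:AR-1}; thus the two regular maps $A \times A \to A$ given by $(a,x) \mapsto \bm(a,x)$ and $(a,x) \mapsto (1_A)^{-1} a x$ agree on the dense set $A^{\times} \times A$, and therefore everywhere. So $\bm(a,x) = (1_A)^{-1} a x$ for all $a,x \in A$.

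Finally I would package this as the desired isomorphism: the map $\varphi \colon A \to K$, $\varphi(x) = (1_A)^{-1} x$, is multiplication by a nonzero scalar, hence an automorphism of the variety $\Af^1$; it is additive, it satisfies $\varphi(\bm(x,y)) = (1_A)^{-2} xy = \varphi(x)\varphi(y)$, and $\varphi(1_A) = 1$, so it is an isomorphism of algebraic rings $(A,\ba,\bm) \stackrel{\sim}{\longrightarrow} (K,+,\cdot)$. The only ingredient external to this section is the classification of one-dimensional connected algebraic groups, used to identify $(A,\ba)$ with $\G_a$ (alternatively one could argue directly that the $\G_m$ case would force $A^{\times} = \{1_A\}$, contradicting density); beyond that the argument is elementary, and I do not anticipate a genuine obstacle.
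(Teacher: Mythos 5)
Your argument is correct, and the opening move is the same as the paper's: invoke Lemma \ref{L:AR-5} to see that $(A,\ba)$ is unipotent, hence isomorphic to $\mathbb{G}_a$ by the classification of connected one-dimensional affine groups. Where you diverge is in pinning down $\bm$. The paper writes $\bm(x,y)$ as a polynomial and runs a degree computation: associativity forces $m={\rm deg}_x\,\bm$ to satisfy $m^2=m$, commutativity and the existence of $1_A$ rule out $m=0$, and the identities $\bm(x,0)=\bm(0,y)=0$ then force $\bm(x,y)=axy$. You instead observe that for $a\in A^{\times}$ the left multiplication $\lambda_a$ is an algebraic-group automorphism of $\mathbb{G}_a$ (its inverse $\lambda_{a^{-1}}$ is regular by Remark 2.2), hence scalar, which gives $\bm(a,x)=(1_A)^{-1}ax$ on $A^{\times}\times A$, and you extend to all of $A\times A$ by the density of $A^{\times}$ from Proposition \ref{P:AR-1}. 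Both routes are characteristic-free and end with the same rescaling isomorphism $x\mapsto (1_A)^{-1}x$. Your version leans on more of the section's earlier machinery (Proposition \ref{P:AR-1} and Remark 2.2) but avoids the polynomial bookkeeping and, notably, never uses associativity of $\bm$ directly, only the unit structure; the paper's version is more self-contained and elementary, using only the ring axioms and degree counts. The one small caveat in your write-up is the parenthetical alternative for excluding $\mathbb{G}_m$ (it is not immediate that the multiplicative case "forces $A^{\times}=\{1_A\}$"), but since your main line of argument excludes $\mathbb{G}_m$ via Lemma \ref{L:AR-5} exactly as the paper does, nothing rests on that aside.
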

\begin{proof}
By Lemma \ref{L:AR-5}, $(A, \ba)$ is unipotent. Hence $(A, \ba) \simeq \mathbb{G}_a$ by (\cite{Bo}, Theorem 10.9). To complete the proof, we need to describe the multiplication map $\bm.$ In any case, $\bm$ is given by a polynomial $\bm (x,y).$
Let $m = {\rm deg}_x \bm (x,y).$ Then by condition (II) in Definition 2.1, we have
$$
m^2 = {\rm deg}_x  \bm (\bm(x,y), z) = {\rm deg}_x \bm (x, \bm(y,z)) = m.
$$
So, $m$ is 0 or 1. If $m=0,$ then since $\bm(x,y) = \bm  (y,x),$ ${\rm deg}_y  \bm (x,y) = 0$, and therefore $\bm$ is constant. Hence $\bm \equiv 0,$ and there is no identity, a contradiction. So, $m = 1.$ Then also ${\rm deg}_y  \bm (x,y) = 1,$ and therefore
$$
\bm (x,y) = axy + b(x+ y) + c.
$$
The identities $\bm (x,0) \equiv 0$ and $\bm (0,y) \equiv 0$ force $b = c = 0.$ Thus $\bm  (x,y) = axy.$ Consider $\sigma \colon K \to K$, $\sigma (x) = a x.$ Then $\sigma (\bm (x, y)) = a^2 xy = (ax)(ay).$ In other words, if we let $\bm' \colon K \times K \to K,$ $\bm' (x,y) = xy,$ then the diagram
$$
\xymatrix{K \times K \ar[d]_{\bm} \ar[r]^{\sigma \times \sigma} & K \times K \ar[d]^{\bm'} \\ K \ar[r]^{\sigma} & K}
$$
commutes. Thus, $\sigma$ gives an isomorphism of algebraic rings $(K, \ba, \bm) \simeq (K, +, \cdot).$
\end{proof}

So, if $\dim  B/{\mathfrak n} = 1$, then $B/{\mathfrak n} \simeq K$ with the natural operations. Combining this with the decomposition (\ref{E:AR120}) and taking into account that
$J = {\mathfrak m}'_1 \oplus \cdots \oplus {\mathfrak m}'_r,$ we obtain the following
\begin{prop}\label{P:AR-20}
Let $A$ be a commutative artinian algebraic ring with Jacobson radical $J.$ Then

\vskip1mm

\noindent {\rm (i)} $A \simeq A_1 \oplus \cdots \oplus A_r$, where each $A_i$ is a local commutative artinian
algebraic ring;

\vskip1mm

\noindent {\rm (ii)} \parbox[t]{15cm}{$A/{J} \simeq (K, +, \cdot)^n \oplus C$ where, $n = \dim A/J$ and $C$ is
a finite algebraic ring; in particular, $A/J$ always virtually comes from an algebra.}

\end{prop}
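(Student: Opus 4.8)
The plan is to assemble the proposition from the material developed immediately above, which already does most of the work; the argument is mostly bookkeeping. For part (i), I would start from the idempotents $e_1, \dots, e_r$ furnished by \cite{At}, Theorem 8.7 and the abstract decomposition $A \simeq A_1 \oplus \cdots \oplus A_r$ recorded in (\ref{E:AR120}). The point to check beyond the abstract statement is that this is an isomorphism \emph{of algebraic rings}: by Remark 2.2 each map $x \mapsto x e_i$ is a regular endomorphism of the algebraic group $(A, \ba)$, so $A_i = e_i A$ is a Zariski-closed subgroup (\cite{Bo}, Corollary 1.4) that is visibly closed under $\bm$, hence a commutative artinian local algebraic subring; the decomposition map $a \mapsto (e_1 a, \dots, e_r a)$ is regular, and its inverse $(a_1, \dots, a_r) \mapsto a_1 + \cdots + a_r$ is regular as well, so we indeed get an isomorphism of algebraic rings. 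This gives (i).

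For part (ii), I would first reduce to analyzing a single local factor $B = A_i$ with maximal ideal $\n = \m'_i$ (which is Zariski-closed by Lemma \ref{L:AR-1}) and invoke the dichotomy $\dim B/\n \leq 1$ coming from Corollary \ref{C:AR-3}. If $\dim B/\n = 0$, then $B/\n$ is finite, and descending the $\n$-adic filtration (\ref{E:AR121}), with each successive quotient a finitely generated --- hence finite --- $B/\n$-module, shows that $B$, and a fortiori $B/\n$, is finite. If $\dim B/\n = 1$, then $B/\n$ is an infinite algebraic division ring, which is automatically connected (a proper ideal of finite index is impossible in a division ring unless the ring is finite), so Proposition \ref{P:AR-3} yields $B/\n \simeq (K, +, \cdot)$.

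Applying this to every factor and using that $J$ corresponds under (\ref{E:AR120}) to $\m'_1 \oplus \cdots \oplus \m'_r$, one obtains $A/J \simeq \bigoplus_{i=1}^{r} A_i/\m'_i$. Renumbering so that $A_i/\m'_i \simeq K$ for $i \leq n$ and $A_i/\m'_i$ is finite for $i > n$, this reads $A/J \simeq (K, +, \cdot)^n \oplus C$ with $C = \bigoplus_{i>n} A_i/\m'_i$ a finite ring; since a finite ring has $K$-dimension $0$, the equality $n = \dim A/J$ is automatic, and since $(K,+,\cdot)^n$ is the algebraic ring attached to the $K$-algebra $K^n$, it follows that $A/J$ virtually comes from an algebra.

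I do not expect a serious obstacle here, since the two genuinely substantive ingredients --- Corollary \ref{C:AR-3} and Proposition \ref{P:AR-3} --- are already available and the local two-case analysis has in effect been carried out in the paragraphs preceding the statement. The only places that warrant care are checking that the abstract idempotent decomposition really respects the algebraic structure (regularity in both directions and closedness of the $A_i$) and confirming that $C$ is legitimately a ``finite algebraic ring'' --- which it is, being a finite direct sum of finite local artinian rings and hence a zero-dimensional variety.
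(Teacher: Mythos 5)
Your proposal is correct and follows essentially the same route as the paper: the idempotent decomposition (\ref{E:AR120}) into local artinian factors for (i), and for (ii) the dichotomy $\dim B/\mathfrak{n} \leq 1$ from Corollary \ref{C:AR-3}, the finiteness argument via the filtration (\ref{E:AR121}) in the zero-dimensional case, and Proposition \ref{P:AR-3} in the one-dimensional case. Your extra checks (regularity of the decomposition in both directions, closedness of the $A_i$, and the connectedness of $B/\mathfrak{n}$ via the ideal property of the connected component) are exactly the details the paper leaves implicit.
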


Finally, we would like to explain the requirement in Definition 2.1 that the algebraic variety underlying an algebraic
ring be {\it affine.} Of course, any commutative algebraic group $(A , \ba)$ (in particular, any abelian variety) can be made
into an algebraic ring satisfying conditions (I)-(III) of Definitions 2.1 by taking $\bm \colon A \times A \to A,$ $\bm (x , y) = 0_A$ for
all $x , y \in A.$ This ring, however, will not satisfy condition (IV) of the definition (unless it is trivial). We will now show that only affine varieties can
support the structure of an algebraic ring satisfying all of the conditions (I)-(IV).
\begin{thm}\label{T:AR-1}
Let $A$ be an irreducible algebraic variety equipped with regular maps $\ba \colon A \times A \to A$ and $\bm \colon A \times A \to A$ satisfying
conditions {\rm (I)-(IV)} of Definition 2.1 (in other words, giving $A$ the structure of an algebraic ring with identity). Then $A$ is affine.
\end{thm}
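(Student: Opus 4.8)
The plan is to invoke Chevalley's structure theorem to reduce to the abelian-variety quotient of $(A,\ba)$, and then to extract a contradiction from the existence of a multiplicative identity. Since $A$ is irreducible it is connected, so $(A,\ba)$ is a connected commutative algebraic group over the algebraically closed (hence perfect) field $K$. By the Barsotti--Chevalley structure theorem there is a unique connected affine (linear) algebraic subgroup $L \subseteq A$, maximal among connected affine subgroups, with $B := A/L$ an abelian variety. It suffices to show $B = \{0_B\}$, for then $A = L$ is affine.

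First I would check that $L$ is a two-sided ideal. Given $a \in A$, Remark 2.2 says $\lambda_a$ and $\rho_a$ are endomorphisms of the algebraic group $(A,\ba)$. Thus $\lambda_a(L)$ is a closed subgroup of $A$ ((\cite{Bo}, Corollary 1.4)) which, being a homomorphic image of the affine connected group $L$, is itself affine and connected; by maximality of $L$ we get $\lambda_a(L) \subseteq L$, i.e. $aL \subseteq L$, and symmetrically $La \subseteq L$. Next I would push the ring structure down to $B$. Write $\pi \colon A \to B$ for the quotient. From distributivity, for $l,l' \in L$ we have $\bm(a+l,\, a'+l') = \bm(a,a') + al' + la' + ll' \equiv \bm(a,a') \pmod{L}$, so $\pi \circ \bm \colon A \times A \to B$ is constant on the orbits of the free translation action of $L \times L$ on $A \times A$, whose geometric quotient is $\pi \times \pi \colon A\times A \to B \times B$; hence $\pi\circ\bm$ factors through a regular map $\bar\bm \colon B \times B \to B$, and $\ba$ likewise induces $\bar\ba$, which is simply the group law of $B$. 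Since $\pi$ (and $\pi \times \pi \times \pi$) is surjective, identities (II) and (III) pass from $A$ to $B$, and with $\bar 1 := \pi(1_A)$ so does (IV); thus $(B,\bar\ba,\bar\bm)$ satisfies (I)--(IV), the sole difference from Definition 2.1 being that $B$ is not a priori affine.

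The punchline: distributivity on $B$ gives $\bar\bm(x,0_B) = \bar\bm(x,0_B) + \bar\bm(x,0_B)$, whence $\bar\bm(x,0_B) = 0_B$ for all $x$, and similarly $\bar\bm(0_B,y) = 0_B$. Now $B \times B$ is a complete variety and $\bar\bm \colon B\times B \to B$ sends $(0_B,0_B)$ to $0_B$; by the rigidity lemma for abelian varieties (any morphism between abelian varieties carrying identity to identity is a group homomorphism), $\bar\bm$ is a homomorphism $(B\times B,\bar\ba) \to (B,\bar\ba)$. Therefore $\bar\bm(x,y) = \bar\bm\bigl((x,0_B) + (0_B,y)\bigr) = \bar\bm(x,0_B) + \bar\bm(0_B,y) = 0_B$ for all $x,y$, i.e. $\bar\bm \equiv 0_B$. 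But (IV) forces $y = \bar\bm(\bar 1, y) = 0_B$ for every $y \in B$, so $B = \{0_B\}$ and $A = L$ is affine.

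The main obstacle is conceptual rather than computational: it is the observation that the abelian quotient $B$ genuinely inherits a ring-with-identity structure in the sense of conditions (I)--(IV) -- here one must resist invoking the paper's earlier "quotient by a closed 2-sided ideal" construction, which presupposed that the total space is affine, and instead verify the descent of $\ba,\bm$ and of the axioms directly -- together with the fact that a regular "multiplication" on an abelian variety that vanishes along the two coordinate axes must vanish identically, which is irreconcilable with the presence of a unit. Everything else (the ideal property of $L$, descent of the regular maps through the geometric quotient, transport of the axioms along the surjection $\pi$) is routine.
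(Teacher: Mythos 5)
Your proof is correct and follows essentially the same route as the paper: Chevalley's theorem, the affine part of $(A,\ba)$ being a two-sided ideal, descent of the ring structure to the abelian-variety quotient, and rigidity forcing the induced multiplication to vanish there, which is incompatible with the presence of an identity. The only (harmless) variations are that you deduce the ideal property from the maximality of $L$ among connected affine subgroups rather than from the paper's argument that $(\mathfrak{a}+a\mathfrak{a})/\mathfrak{a}$ is simultaneously affine and complete, and that you invoke the ``identity-preserving morphisms of abelian varieties are homomorphisms'' form of rigidity instead of Mumford's Form I.
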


The first step in the proof is to consider the case where $A$ is complete.
\begin{lemma}\label{L:AR-6}
Let $A$ be a \emph{complete} irreducible variety equipped with regular morphisms  $\ba \colon A \times A \to A$ and
$\bm \colon A \times A \to A$ satisfying conditions {\rm (I)-(III)} of Definition 2.1. Then $\bm \equiv 0_A.$
\end{lemma}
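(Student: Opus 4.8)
\noindent\emph{Proof proposal.} The plan is to use that $(A,\ba)$, being a \emph{complete} connected commutative algebraic group, is an abelian variety, and to apply the classical rigidity lemma to the multiplication morphism $\bm \colon A \times A \to A$. (Note that condition (IV) is not assumed here, which is consistent with the conclusion: a complete irreducible variety supporting a ring structure with $\bm \not\equiv 0_A$ would be impossible, and this is exactly the input for Theorem \ref{T:AR-1}.)

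First I would record the elementary observation, valid under (I)--(III) alone, that $\bm(x,0_A) = 0_A$ and $\bm(0_A,y) = 0_A$ for all $x,y \in A$. Indeed, setting $y = z = 0_A$ in the first distributivity identity of (III) gives $\bm(x,0_A) = \ba(\bm(x,0_A),\bm(x,0_A))$, and cancelling in the group $(A,\ba)$ yields $\bm(x,0_A) = 0_A$; the identity $\bm(0_A,y) = 0_A$ is symmetric. In particular $\bm$ is constant, equal to $0_A$, on the slice $A \times \{0_A\}$.

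Next I would invoke the rigidity lemma: if $X$ is a complete variety over $K$, $Y$ an irreducible variety, $Z$ any variety, and $f \colon X \times Y \to Z$ a morphism such that $f(X \times \{y_0\})$ is a single point for some $y_0 \in Y$, then $f$ factors through the projection $\mathrm{pr}_Y \colon X \times Y \to Y$. Applying this with $X = Y = Z = A$ (which is complete and irreducible by hypothesis), $f = \bm$, and $y_0 = 0_A$, and using the previous step, one obtains a morphism $g \colon A \to A$ with $\bm(x,y) = g(y)$ for all $x,y \in A$. Evaluating at $x = 0_A$ gives $g(y) = \bm(0_A,y) = 0_A$ for every $y$, hence $\bm \equiv 0_A$.

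The argument is short; the only point requiring care is the applicability of the rigidity lemma, which needs exactly the completeness of $A$ (the hypothesis of the lemma) together with irreducibility, so I anticipate no genuine obstacle. Alternatively, the same conclusion can be phrased via Remark 2.2: the endomorphisms $\lambda_a$ of the abelian variety $(A,\ba)$ form a family parametrized by the connected variety $A$ which contains the zero endomorphism $\lambda_{0_A}$ (since $\bm(0_A,y) = 0_A$), and a connected family of homomorphisms of abelian varieties is constant; thus $\lambda_a \equiv 0$ and $\bm \equiv 0_A$.
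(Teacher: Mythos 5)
Your proposal is correct and follows essentially the same route as the paper: both verify $\bm(A \times \{0_A\}) = \{0_A\}$ and then apply the Rigidity Lemma to conclude that $\bm$ factors through the second projection, whence $\bm \equiv 0_A$ after evaluating at $x = 0_A$. Your explicit verification that $\bm(x,0_A)=0_A$ from distributivity is a useful detail the paper leaves implicit.
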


The proof relies on the following well-known result from (\cite{Mum}, Chapter 2).
\begin{lemma}\label{L:AR-7}
{\rm (Rigidity Lemma, Form I)} Let $X$ be a complete irreducible variety, $Y$ and $Z$ any irreducible varieties, and $f \colon X \times Y \to Z$ a morphism such that for some $y_0 \in Y,$ $f(X \times \{y_0 \})$ is a single point $z_0$ of $Z$. Then there is a morphism $g \colon Y \to Z$ such that if $p_2 \colon X \times Y \to Y$ is the projection, $f = g \circ p_2.$
\end{lemma}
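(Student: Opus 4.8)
The plan is to show directly that $f(x,y)$ is independent of $x$, exploiting the completeness of $X$ together with the basic fact that a morphism from a complete irreducible variety to an affine variety is constant (equivalently, that the only global regular functions on a complete irreducible $K$-variety are the constants). Fix a point $x_0 \in X$ and define $g \colon Y \to Z$ to be the restriction of $f$ to $\{x_0\} \times Y$, the latter identified with $Y$ via $p_2$; this is a morphism, and the goal is to prove that $f = g \circ p_2$ on all of $X \times Y$.

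First I would pick an affine open neighbourhood $U$ of $z_0$ in $Z$ and consider the closed subset $W := f^{-1}(Z \setminus U) \subseteq X \times Y$. Since $X$ is complete, the projection $p_2 \colon X \times Y \to Y$ is a closed map, so $p_2(W)$ is closed in $Y$. By hypothesis $f(X \times \{y_0\}) = \{z_0\} \subseteq U$, so $X \times \{y_0\}$ is disjoint from $W$, whence $y_0 \notin p_2(W)$. Therefore $V := Y \setminus p_2(W)$ is a nonempty open subset of $Y$ containing $y_0$, and by construction $f(X \times \{y\}) \subseteq U$ for every $y \in V$.

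Next, for each fixed $y \in V$ the variety $X \times \{y\} \cong X$ is complete and irreducible, while $U$ is affine; hence the restriction $f|_{X \times \{y\}} \colon X \to U$ is constant, since after choosing a closed embedding $U \hookrightarrow \Af^m$ each coordinate of this map is a global regular function on $X$ and so lies in $K$. Evaluating at $x_0$ identifies this constant value as $g(y)$, which gives $f(x,y) = g(p_2(x,y))$ for all $(x,y) \in X \times V$.

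Finally I would upgrade this identity from $X \times V$ to all of $X \times Y$ by a density argument. As $Y$ is irreducible and $V$ is a nonempty open subset, $V$ is dense in $Y$, so $X \times V$ is dense in $X \times Y$; and $X \times Y$ is irreducible because $X$ and $Y$ are irreducible over the algebraically closed field $K$. The two morphisms $f$ and $g \circ p_2$ from $X \times Y$ to the (separated) variety $Z$ agree on the dense subset $X \times V$, and the locus on which two morphisms into a separated variety agree is closed; hence $f = g \circ p_2$ on all of $X \times Y$, and $g$ is the required morphism. I do not anticipate any real obstacle: the only ingredients are the properness of projections off a complete factor, the triviality of global functions on a complete irreducible variety, and the standard separatedness argument, so the only work is to assemble these correctly.
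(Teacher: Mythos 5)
Your proof is correct: the closedness of $p_2$ (completeness of $X$), the constancy of morphisms from a complete irreducible variety to the affine chart $U$, and the final density/separatedness argument are assembled exactly as needed. The paper itself gives no proof of this lemma --- it simply cites Mumford's \emph{Abelian Varieties} --- and your argument is precisely the standard proof found there, so there is nothing to add.
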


Now, to prove Lemma \ref{L:AR-6}, suppose $A$ is a complete irreducible variety with the structure of an algebraic ring. Since $\bm (A, 0_A) = \{ 0_A \}$, the Rigidity Lemma gives a morphism $\nu \colon A \to A$ such that $\bm = \nu \circ p_2.$ Thus $\bm (a,b) = \nu (b)$ for all $a, b \in A$, i.e. $\bm$ is independent of $a.$ Setting $a = 0_A,$ we obtain that $\bm \equiv 0_A.$

\vskip1mm

\noindent {\it Proof of Theorem \ref{T:AR-1}}. We need to recall the following celebrated result of Chevalley \cite{C} (see \cite{Con} for a modern proof).
\begin{thm}
Let $K$ be an algebraically closed field and $G$ a connected algebraic group over $K.$ Then there exists a unique connected normal affine closed subgroup $H$ of $G$ for which $G/H$ is an abelian variety.
\end{thm}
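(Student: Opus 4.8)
This is Chevalley's structure theorem; I would prove the uniqueness and existence assertions separately, disposing of the (elementary) uniqueness first, then reducing existence, by induction on $\dim G$, to a single geometric fact about non-affine group varieties, and finally sketching the proof of that fact, which carries the real content.

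\smallskip

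\noindent\emph{Uniqueness.} Suppose $H_1$ and $H_2$ are connected normal affine closed subgroups of $G$ with each $G/H_i$ an abelian variety. I would look at the image of $H_1$ under $\pi_2\colon G\to G/H_2$, namely the connected closed subgroup $H_1H_2/H_2$. On the one hand it lies in the complete variety $G/H_2$, so it is complete; on the other hand it is isomorphic to the quotient $H_1/(H_1\cap H_2)$ of the affine algebraic group $H_1$ by a closed normal subgroup, so it is affine. A connected variety that is both complete and affine is a single point, whence $H_1\subseteq H_2$; by symmetry $H_1=H_2$.

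\smallskip

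\noindent\emph{Reduction of existence.} Since $G$ is a connected group variety over the algebraically closed field $K$, it is smooth. If $G$ is complete I would take $H=\{e\}$, and if $G$ is affine I would take $H=G$. So I may assume $\dim G\geq 1$ and that $G$ is neither affine nor complete, and argue by induction on $\dim G$. The engine of the induction is the following fact (here the ideas of Chevalley and Rosenlicht enter):
\begin{center}
$(\ast)$\quad\emph{a connected group variety that is not affine admits a non-constant homomorphism to an abelian variety.}
\end{center}
Granting $(\ast)$, choose such a homomorphism $\psi\colon G\to B_0$. Its image $\psi(G)$ is a closed (\cite{Bo}, Corollary 1.4) connected subgroup of the abelian variety $B_0$, hence is itself a complete connected group variety, i.e. an abelian variety; set $N=(\ker\psi)^{\circ}$. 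Then $N$ is a closed connected normal subgroup of $G$, and since $G/N$ is a finite morphism onto $G/\ker\psi\cong\psi(G)$ it is a complete connected group variety, i.e. an abelian variety. Because $\psi$ is non-constant, $\dim N<\dim G$; and $N\neq\{e\}$, since otherwise $\psi\colon G\to\psi(G)$ would be a finite morphism onto a complete variety, forcing $G$ complete, which was excluded. Thus $N$ is a proper, nontrivial, closed, connected, normal subgroup of $G$. I would then apply the induction hypothesis to $N$ to obtain its Chevalley subgroup $N_{\mathrm{aff}}\trianglelefteq N$ --- the unique connected normal affine closed subgroup of $N$ with $N/N_{\mathrm{aff}}$ an abelian variety. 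By the uniqueness proved above, $N_{\mathrm{aff}}$ is characteristic in $N$, hence normal in $G$; it is connected, affine and closed; and $G/N_{\mathrm{aff}}\to G/N$ is a torsor under the abelian variety $N/N_{\mathrm{aff}}$ over the complete group $G/N$, so $G/N_{\mathrm{aff}}$ is complete, hence an abelian variety. Therefore $H=N_{\mathrm{aff}}$ meets all requirements, and the induction is complete.

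\smallskip

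\noindent\emph{Proof of $(\ast)$, and the main obstacle.} Let $\Gamma$ be a connected group variety that is not affine, and pick a non-empty affine open $U\subseteq\Gamma$ with $e\in U$; since $\Gamma$ is not affine, $U\neq\Gamma$. Because $\Gamma$ is normal, the complement of an affine open is of pure codimension one (a standard fact: removing from a normal variety a closed set of codimension $\geq 2$ leaves the ring of global functions unchanged, so $U$ could not be affine if its complement had such a component). Thus $Z:=\Gamma\setminus U$ is nonempty of pure codimension one; I would write $Z=Z_1\cup\cdots\cup Z_r$ with the $Z_i$ prime divisors and form the effective Cartier divisor $D=Z_1+\cdots+Z_r$. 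Left translations $L_g(x)=gx$ carry $D$ to effective divisors $L_g^{*}D$ forming an algebraic family as $g$ ranges over the connected variety $\Gamma$; consequently the classes $[L_g^{*}\mathcal{O}_{\Gamma}(D)]-[\mathcal{O}_{\Gamma}(D)]$ lie in the identity component $\mathrm{Pic}^{0}$ of the Picard group (passing, if necessary, to a projective compactification of $\Gamma$), and, since a connected group acts trivially there, the assignment
$$\varphi\colon\Gamma\longrightarrow\mathrm{Pic}^{0},\qquad g\longmapsto[L_g^{*}\mathcal{O}_{\Gamma}(D)]-[\mathcal{O}_{\Gamma}(D)],$$
is a homomorphism of algebraic groups. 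The decisive point --- which I expect to be by far the hardest step, the rest of the argument being essentially formal --- is to show that, for $D$ the boundary divisor of an affine open, this $\varphi$ is \emph{non-constant} with \emph{complete} image: non-constancy because $D$ admits no non-trivial translate inside its own linear system (so if $\varphi$ were trivial, the translation action of $\Gamma$ on the suitably linearized line bundle $\mathcal{O}_{\Gamma}(D)$ would make $Z$ translation-invariant, hence --- by transitivity of left translation --- force $Z$ to be empty), and completeness of $\varphi(\Gamma)$ because a high multiple of $D$ is ``ample along the boundary'' of $\Gamma$ in a compactification. Granting this, $\varphi(\Gamma)$ is a complete connected group variety, i.e. an abelian variety, and $\varphi\colon\Gamma\to\varphi(\Gamma)$ is the required non-constant homomorphism, establishing $(\ast)$.
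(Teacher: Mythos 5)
The paper does not actually prove this statement: it is quoted as a celebrated theorem of Chevalley, with \cite{C} and \cite{Con} cited for the proof, so the only question is whether your argument would stand on its own. Your uniqueness argument is correct and standard (the image of $H_1$ in $G/H_2$ is affine and complete, hence trivial), and the inductive reduction of existence to the fact $(\ast)$ is also sound: $N_{\mathrm{aff}}$ is characteristic in $N$ by uniqueness, hence normal in $G$, and $G/N_{\mathrm{aff}}$ is complete because it is a torsor under the abelian variety $N/N_{\mathrm{aff}}$ over the complete group $G/N$.

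The genuine gap is in $(\ast)$ itself, which is where all the content of Chevalley's theorem lives, and your sketch of it would not survive being written out. Concretely: (a) since $\Gamma$ is not complete, you must pass to a normal projective compactification $X$ to have an abelian variety $\mathrm{Pic}^0$ at all (in characteristic $p$ even the existence and reducedness of the Picard variety of $X$ is a nontrivial point), but left translation by $g$ does not extend to $X$, so $L_g^{*}D$ is simply not defined there; one has to work with closures of translated boundary divisors, show that $g \mapsto [\overline{gZ}]$ is an algebraic family, and prove a theorem-of-the-square type statement on $X$ to get that $\varphi$ is a homomorphism of algebraic groups --- this is precisely the technical heart of the Rosenlicht--Conrad argument, not a formality. (b) Your non-constancy argument conflates linear equivalence with equality: $\varphi(g)=0$ for all $g$ only says every translate of the boundary divisor is \emph{linearly equivalent} to $D$, not equal to it, so you cannot conclude that $Z$ is translation-invariant and hence empty; excluding this case needs a separate argument (e.g., identifying sections of $\mathcal{O}_X(nD)$ with regular functions on $U$ and exploiting that $\Gamma$ is not affine). (c) By contrast, the completeness of $\varphi(\Gamma)$, which you single out as hard, is free once (a) is settled, since a closed connected subgroup of an abelian variety is an abelian variety. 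As it stands, the proposal correctly reduces the theorem but replaces its essential step by a plausibility argument; the honest options are to cite \cite{C} and \cite{Con} as the paper does, or to supply the Picard-scheme and linear-equivalence arguments in full.
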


\vskip1mm

Now let $A$ be an irreducible variety equipped with regular maps
$\ba$ and $\bm$ satisfying conditions (I)-(IV) of Definition 2.1. By Chevalley's Theorem,
there exists a closed connected affine subgroup $\mathfrak{a}$ of  $(A, \ba)$ such that $A/ \mathfrak{a}$ is an abelian variety (in particular, it is complete).
Let us show that $\mathfrak{a}$ is in fact a 2-sided ideal of $A$. For this, we need to prove that $a \mathfrak{a} \subset \mathfrak{a}$ and $\mathfrak{a} a \subset \mathfrak{a}$ for any $a \in A$. We only give a proof of the first inclusion as the second one is proved completely analogously.

By Remark 2.2, the map $\lambda_a \colon A \to A$ is a morphism of algebraic groups, so $a \mathfrak{a}$ is a closed subgroup of $A$. Moreover, it is affine.
Indeed, let $\mathfrak{b} = \{x \in \mathfrak{a} \mid ax = 0 \}.$ This is clearly a closed normal subgroup of $\mathfrak{a},$ so the quotient $\mathfrak{a}/ \mathfrak{b}$ is an affine algebraic group (\cite{Bo}, Theorem 6.8). On the other hand, $\lambda_a$ induces a bijective morphism $\mathfrak{a}/ \mathfrak{b} \to a \mathfrak{a}$, and therefore $a \mathfrak{a}$ is affine by (\cite{Bo}, Proposition AG 18.3).
Hence $\mathfrak{a} \oplus a \mathfrak{a}$ is a connected affine algebraic group.

Now let $\gamma \colon \mathfrak{a} \oplus a \mathfrak{a} \to \mathfrak{a} + a \mathfrak{a}$, $(x,y) \mapsto x + y$. By condition (I) of Definition 2.1, this is a morphism of algebraic groups, so $\mathfrak{a} + a \mathfrak{a}$ is a closed subgroup of $A.$ We will now show that it is affine. Let $\mathfrak{c} = \mathfrak{a} \cap a \mathfrak{a}$, embedded into $\mathfrak{a} \oplus a \mathfrak{a}$ via $x \mapsto (x, -x).$ Clearly $\mathfrak{c}$ is a closed normal subgroup of $\mathfrak{a} \oplus a \mathfrak{a},$ so $(\mathfrak{a} \oplus a \mathfrak{a})/ \mathfrak{c}$ is affine. But $\gamma$ induces a bijection $(\mathfrak{a} \oplus a \mathfrak{a})/ \mathfrak{c} \to \mathfrak{a} + a \mathfrak{a},$ so as above, we conclude that $\mathfrak{a} + a \mathfrak{a}$ is affine. Thus, $(\mathfrak{a} + a \mathfrak{a})/ \mathfrak{a}$ is a closed, affine connected subgroup of the complete variety $A/ \mathfrak{a}$ (\cite{Bo}, Corollary 6.9). Therefore, it consists of just a single point. In other words, $a \mathfrak{a} \subset \mathfrak{a}$, as needed.

Thus, $\mathfrak{a}$ is a 2-sided ideal of $A$, so as remarked before the statement of Lemma \ref{L:AR-1}, we obtain an algebraic ring with identity $(A/ \mathfrak{a}, \bar{\ba}, \bar{\bm}).$
But $A/ \mathfrak{a}$ is complete, so $\bar{\bm} \equiv 0_{\bar{A}}$ by Lemma \ref{L:AR-6}. Since $A/ \mathfrak{a}$ contains an identity element, it follows that $A = \mathfrak{a}$, i.e. $A$ is affine.

\vskip5mm

\section{An algebraic ring associated to a representation of $G(R)^+$}\label{S:ARR}

Let $\Phi$ be a reduced irreducible root system of rank $\geq 2$, and let
$G$ be the corresponding universal Chevalley group scheme over $\Z$ (cf. \cite{Bo1}). Then, in particular, for every root $\alpha \in \Phi$, we have a canonical morphism of group schemes $e_{\alpha} \colon \mathbb{G}_a \to G$, where $\mathbb{G}_a = {\rm Spec} \  \Z[T]$ is the standard additive group scheme over $\Z.$ For any commutative ring $R$, the group of $R$-points $G(R)$ is the {\it universal Chevalley group of type $\Phi$ over $R$}. Furthermore, for $\alpha \in \Phi$, the morphism $e_{\alpha}$ induces a group homomorphism $R^+ \to G(R)$, which will also be denoted $e_{\alpha}$ (rather than $(e_{\alpha})_R$) whenever this does not cause confusion. Then $e_{\alpha}$ is an isomorphism between $R^+$ and the subgroup $U_{\alpha} (R) := e_{\alpha} (R)$ of $G(R).$ The subgroup of $G(R)$ generated by the $U_{\alpha} (R)$, for all $\alpha \in \Phi$, will be denoted by $G(R)^+$ and called the {\it elementary subgroup of $G(R)$}.

Now let $K$ be an algebraically closed field (of any characteristic), and let $\rho \colon G(R)^+ \to GL_n (K)$ be a finite-dimensional (abstract) representation. The goal of this section is to associate to $\rho$ an algebraic ring $A$ together with a homomorphism of abstract rings
$f \colon R \to A$ with Zariski-dense image that satisfy some natural properties. This construction, which relies on explicit computations with Steinberg relations in $G(R)^+$, was described in \cite{Kas} for groups of type $A_2.$ The following theorem generalizes it to groups of all types, with some minor restrictions on $R$ in the cases where $\Phi$ contains roots of different lengths.

\begin{thm}\label{T:ARR-1}
Let $\Phi$ be a reduced irreducible root system of rank $\geq 2$,  $G$ the corresponding universal Chevalley group scheme, and $R$ a commutative ring such that $(\Phi, R)$ is a nice pair \footnotemark. Then, given a representation $\rho \colon G(R)^+ \to GL_n (K)$, there exists a commutative algebraic ring $A$ with identity together with a homomorphism of abstract rings $f \colon R \to A$ having Zariski-dense image such that for every root $\alpha \in \Phi$, there is an injective regular map $\psi_{\alpha} \colon A \to H$ into $H := \overline{\rho (G(R)^+)}$ (Zariski closure) satisfying
\begin{equation}\label{E:ARR101}
\rho (e_{\alpha} (t)) = \psi_{\alpha} (f(t)).
\end{equation}
for all $t \in R.$ \footnotetext{Recall that this means that $2 \in R^{\times}$ whenever $\Phi$ contains a subsystem of type $B_2$ and $2, 3 \in R^{\times}$ if $\Phi$ is of type $G_2$.}
\end{thm}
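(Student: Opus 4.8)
The plan is to imitate, for an arbitrary $\Phi$ of rank $\geq 2$, the construction carried out in \cite{Kas} for $\Phi = A_2$: the variety underlying $A$ will be the Zariski closure of the image of one fixed root subgroup, its addition will be inherited from the group law of that subgroup, and its multiplication will be read off from a Steinberg commutator relation, after which the identity element and the ring axioms for $A$ drop out by Zariski density from the corresponding facts in $R$. Concretely: for every root $\gamma \in \Phi$ the composite $\bar{e}_\gamma := \rho \circ e_\gamma \colon R^+ \to H$ is a homomorphism out of a commutative group, so $A_\gamma := \overline{\rho(U_\gamma(R))} = \overline{\bar{e}_\gamma(R^+)}$ is a closed commutative algebraic subgroup of $H$ in which $\bar{e}_\gamma$ has dense image. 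I fix one root $\delta$, take $A := A_\delta$ with the algebraic-group addition $\ba$ of $A_\delta$, and set $f := \bar{e}_\delta \colon R \to A$; then $f$ is additive with Zariski-dense image by construction, so everything reduces to producing the multiplication $\bm$ and the maps $\psi_\gamma$.

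Next I construct the $\psi_\gamma$. For each root $\gamma$ pick a Weyl representative $n_\gamma = w_\gamma(1) \in G(R)^+$ (note $w_\gamma(1) = e_\gamma(1)e_{-\gamma}(-1)e_\gamma(1)$ lies in $G(R)^+$ with no units needed). Conjugation by $\rho(n_\gamma)$ is an inner, hence biregular, automorphism of $H$, and the Chevalley relation $n_\gamma e_\mu(t) n_\gamma^{-1} = e_{s_\gamma(\mu)}(\pm t)$ shows it carries $A_\mu$ isomorphically onto $A_{s_\gamma(\mu)}$, intertwining $\bar{e}_\mu$ with $\bar{e}_{s_\gamma(\mu)}$ up to a sign that can be absorbed by precomposing with $[-1]$ on $A$. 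Since the Weyl group is transitive on roots of a fixed length, this already gives, for every $\gamma$ in the $W$-orbit of $\delta$, an injective regular map $\psi_\gamma \colon A \to H$ with $\psi_\gamma \circ f = \bar{e}_\gamma$, obtained by composing the inclusion $A = A_\delta \hookrightarrow H$ with a suitable $\mathrm{Int}\,\rho(n)$. When $\Phi$ is not simply laced, roots of the other length are instead handled by extracting $\psi_\gamma$ from a Steinberg commutator relation joining the two length classes (e.g.\ in $B_2 = C_2$, a relation of the form $[\bar{e}_\mu(s), \bar{e}_\nu(t)] = \bar{e}_{\mu+\nu}(\pm 2\, st)$ for two short roots $\mu, \nu$ with $\mu + \nu$ long); clearing the integer structure constant $N \in \{\pm 1, \pm 2, \pm 3\}$ and the coefficients of the higher-degree terms $s^i t^j$ that occur along a longer root string requires exactly that $2$, resp.\ $2$ and $3$, be invertible in $R$ — i.e.\ that $(\Phi, R)$ be a nice pair.

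Now for the multiplication. Since $\mathrm{rank}\,\Phi \geq 2$, $\Phi$ contains a rank-$2$ irreducible subsystem, of type $A_2$, $B_2$ or $G_2$; inside it choose roots $\alpha, \beta$ with $\alpha + \beta \in \Phi$ and $i\alpha + j\beta \notin \Phi$ for all $i, j \geq 1$, $i + j \geq 3$ (in the $A_2$ case the structure constant is then $\pm 1$; in the $B_2$ and $G_2$ cases the available pairs force constants $\pm 2$, $\pm 3$, again covered by niceness). Applying $\rho$ to the identity $[e_\alpha(s), e_\beta(t)] = e_{\alpha+\beta}(N_{\alpha\beta}\, st)$ holding in $G(R)^+$ gives $[\bar{e}_\alpha(s), \bar{e}_\beta(t)] = \bar{e}_{\alpha+\beta}(N_{\alpha\beta}\, st)$ for all $s, t \in R$, so the regular map $(x,y) \mapsto [x,y]$ sends $A_\alpha \times A_\beta$ into $A_{\alpha+\beta}$ (by density and \cite{Bo}, Corollary 1.4); transporting through $\psi_\alpha, \psi_\beta, \psi_{\alpha+\beta}$ and dividing by the unit $N_{\alpha\beta}$ yields a regular map $\bm \colon A \times A \to A$ with $\bm(f(s), f(t)) = f(st)$ for all $s, t \in R$. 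Because $f$ respects the ring operations and $f(R)$ is Zariski-dense, commutativity, associativity and distributivity of $\bm$ — together with $\bm(f(1), x) = x = \bm(x, f(1))$, so that $1_A := f(1)$ is an identity — follow by verifying them on $f(R)$ and extending by density, both sides of each identity being regular maps of affine varieties. Each $\psi_\gamma$ is injective, being a composite of the inclusion $A_\delta \hookrightarrow H$, biregular inner automorphisms of $H$, and (in the mixed-length case) injective commutator-transport maps, and $\rho(e_\gamma(t)) = \psi_\gamma(f(t))$ holds by construction.

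The hard part is concentrated in the non-simply-laced cases of the two middle steps: there the relevant Steinberg relations have right-hand sides that are products over a non-trivial root string with integer structure constants as large as $\pm 3$ (and with coefficients of $s^i t^j$ growing with the string length), so isolating the single term $\bar{e}_{\alpha+\beta}(st)$ — and, for roots of the non-base length, producing the maps $\psi_\gamma$ — forces one to invert these constants and to check that the resulting $\bm$ and $\psi_\gamma$ are genuinely regular and mutually compatible; showing that the niceness hypothesis is precisely what makes this go through (while bookkeeping the pervasive signs from the Chevalley commutator and Weyl relations) is the bulk of the argument. For simply laced $\Phi$ all structure constants are $\pm 1$ and the proof collapses essentially to the $A_2$ computation of \cite{Kas}.
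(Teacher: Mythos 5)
Your proposal is correct and follows essentially the same route as the paper: reduce via Weyl-group transitivity to a rank-two subsystem of type $A_2$, $B_2$ or $G_2$, take $A$ to be the Zariski closure of the image of one root subgroup with its group law as addition, read off the multiplication from a Chevalley commutator whose root string contributes a single term (clearing the structure constants $\pm 2, \pm 3$ using the niceness hypothesis), and verify the ring axioms and the relation (\ref{E:ARR101}) by Zariski density of $f(R)$. The only substantive point you leave implicit --- and correctly flag as the bulk of the work --- is the explicit construction of regular two-sided inverses for the length-crossing maps $A_{\alpha} \to A_{\beta}$ (the maps $\nu$ and $\theta$ of Lemmas \ref{L:ARR-2} and \ref{L:ARR-3}), which is exactly what makes those maps isomorphisms of varieties and hence makes the transported $\psi_{\gamma}$ injective and the multiplication $\bm$ well defined on $A$.
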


We begin with some simple reductions. First, suppose we have been able to construct an algebraic ring $A$ and a ring homomorphism $f \colon R \to A$ with Zariski-dense image such that a regular map $\psi_{\alpha_0} \colon A \to H$ satisfying (\ref{E:ARR101}) exists for {\it some} root $\alpha_0 \in \Phi.$ Then regular maps $\psi_{\alpha} \colon A \to H$ satisfying (\ref{E:ARR101}) exist for {\it all} roots $\alpha \in \Phi$ having the same length as $\alpha_0 .$ Indeed, it is well-known (e.g., see \cite{H1}, 10.4, Lemma C) that the Weyl group $W(\Phi)$ of $\Phi$ acts transitively on roots of each length. So, it follows from \cite{St1}, 3.8, relation (R4), that there exists $w \in G(R)^+$ such that
\begin{equation}\label{E:ARR102}
w e_{\alpha_0}(t) w^{-1} = e_{\alpha} (\varepsilon(w) t)
\end{equation}
for all $t \in R$, where $\varepsilon(w) \in \{\pm 1 \}$ is independent of $t$.  Now define $\psi_{\alpha} \colon A \to H$ by
$$
\psi_{\alpha} (a) = \rho (w) \psi_{\alpha_0} (\varepsilon (w) a) \rho(w)^{-1} = \rho (w) \psi_{\alpha_0} (a)^{\varepsilon (w)} \rho (w)^{-1}.
$$
This is clearly a regular map, and then in view of (\ref{E:ARR102}), the fact that (\ref{E:ARR101}) holds for $\alpha_0$ implies that it also holds for $\alpha.$ Thus, it is enough to construct an algebraic ring $A$ with a ring homomorphism $f \colon R \to A$ having Zariski-dense image such that a regular map $\psi_{\alpha} \colon A \to H$ satisfying (\ref{E:ARR101}) exists for a single root of each length in $\Phi$. Furthermore, if all roots in $\Phi$ have the same length, then $\Phi$ contains a subsystem $\Phi'$ of type $A_2.$ Otherwise, $\Phi$ either contains a subsystem $\Phi'$ of type $B_2$, or $\Phi$ itself is of type $G_2,$ in which case we set $\Phi' = \Phi.$ In all three cases, $\Phi'$ contains a root of each length occuring in $\Phi$, so it follows from our previous remark that it is enough to construct an algebraic ring $A$ and a ring homomorphism $f \colon R \to A$ with $\overline{f(R)} = A$ such that $\psi_{\alpha} \colon A \to H$ satisfying (\ref{E:ARR101}) exists for a single root in $\Phi'$ of each length. Thus, we can assume without any loss of generality that $\Phi$ is of one of the types $A_2, B_2,$ or $G_2.$

Second, in each of these three cases, $A$ will be constructed as $A_{\alpha} \colon = \overline{\rho (e_{\alpha} (R))}$ for some root $\alpha \in \Phi.$ Letting $\ba_{\alpha} \colon A_{\alpha} \times A_{\alpha} \to A_{\alpha}$ denote the restriction of the product $H \times H \to H$ to $A_{\alpha}$, we observe that
$(A_{\alpha}, \ba_{\alpha})$ is a commutative algebraic subgroup of $H$ (in particular, it is an affine algebraic variety), and
$$
f_{\alpha} \colon (R, +) \to (A_{\alpha}, \ba_{\alpha}), \ \ \ t \mapsto \rho(e_{\alpha} (t))
$$
is a group homomorphism with Zariski-dense image. Multiplication $\bm \colon A \times A \to A$ will be defined in each case by {\it ad hoc} equations depending on the explicit form of the Steinberg relations. The verification of the fact that $(A, \ba, \bm)$ is an algebraic ring in all three cases relies on the following simple observation.

\begin{lemma}\label{L:ARR-1}
Let $A$ be an affine  variety equipped with two regular maps $\ba \colon A \times A \to A$ and $\bm \colon A \times A \to A$. Assume that $(A, \ba)$ is a commutative algebraic group and that there exists a homomorphism $f \colon R \to A$ of an abstract commutative unital ring $R$ into $A$ such that $\overline{f(R)} = A$ and
\begin{equation}\label{E:ARR201}
f(t_1 + t_2) = \ba (f(t_1), f(t_2)) \ \ \ {\rm and} \ \ \ f(t_1 t_2) = \bm (f(t_1), f(t_2))
\end{equation}
for all $t_1, t_2 \in R.$ Then $(A, \ba, \bm)$ is a commutative algebraic ring with identity.

\end{lemma}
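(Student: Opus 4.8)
The plan is to transfer the ring axioms from $R$ to $A$ by a density argument, using that $f(R)$ is Zariski-dense in $A$ and that all the relevant maps are regular. The point is that $(A,\ba)$ is already given to be a commutative algebraic group, so conditions (I) of Definition 2.1 hold automatically; what remains is to verify associativity (II), distributivity (III), commutativity of $\bm$, and the existence of an identity (IV). Each of these is an equality of regular maps that is known to hold on the dense subset $f(R)$ (or a product of copies of it) by (\ref{E:ARR201}) together with the corresponding identity in the abstract ring $R$.

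First I would treat associativity. Consider the two regular maps $A\times A\times A\to A$ given by $(x,y,z)\mapsto \bm(\bm(x,y),z)$ and $(x,y,z)\mapsto \bm(x,\bm(y,z))$. For $x=f(t_1)$, $y=f(t_2)$, $z=f(t_3)$ with $t_i\in R$, repeated use of (\ref{E:ARR201}) shows both sides equal $f(t_1t_2t_3)$, so the two maps agree on $f(R)\times f(R)\times f(R)$. Since $\overline{f(R)}=A$ and $A\times A\times A$ is the closure of the product $f(R)\times f(R)\times f(R)$ (the product of dense sets is dense in the product), and since $A$ is a variety so that regular maps agreeing on a dense set agree everywhere, associativity (II) holds on all of $A\times A\times A$. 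The distributivity identities in (III) and the commutativity $\bm(x,y)=\bm(y,x)$ are handled in exactly the same way, starting from the corresponding identities $t_1(t_2+t_3)=t_1t_2+t_1t_3$, $(t_1+t_2)t_3=t_1t_3+t_2t_3$, and $t_1t_2=t_2t_1$ in $R$, and using that $\ba$ is also regular when comparing the two sides.

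For the identity element (IV), set $1_A:=f(1_R)$. The maps $x\mapsto \bm(1_A,x)$ and $x\mapsto \bm(x,1_A)$ are regular $A\to A$, and by (\ref{E:ARR201}) they agree with the identity map on $f(R)$, since $\bm(f(1_R),f(t))=f(1_R\cdot t)=f(t)$ and similarly on the other side; hence by density they equal $\mathrm{id}_A$ everywhere. Thus $(A,\ba,\bm)$ satisfies (I)--(IV) and the commutativity condition, so it is a commutative algebraic ring with identity. The only subtlety to be careful about — and the one technical point worth stating explicitly — is that the comparison of regular maps on a product requires that $f(R)^{\times m}$ be dense in $A^m$, which follows from the fact that a finite product of dense subsets is dense in the product variety; everything else is routine. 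I do not anticipate a genuine obstacle here, as the lemma is precisely designed to package this density transfer once $\bm$ has been exhibited case by case.
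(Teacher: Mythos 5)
Your proposal is correct and follows essentially the same route as the paper: condition (I) is given, and (II), (III), commutativity, and (IV) with $1_A = f(1_R)$ are each obtained by comparing two regular maps that agree on the Zariski-dense subset $f(R)\times\cdots\times f(R)$ of the corresponding product. The explicit remark about density of finite products of dense sets is a fine (and standard) technical point that the paper leaves implicit.
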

\begin{proof}
Condition (I) of Definition 2.1 holds by our assumption. To verify (II), we observe that (\ref{E:ARR201}), in conjunction with the fact that multiplication in $R$ is associative, implies that the regular maps
$$
\beta_1 \colon A \times A \times A \to A, \ \ \ \beta_1 (x,y,z) = \bm (\bm (x,y), z),
$$
and
$$
\beta_2 \colon A \times A \times A \to A, \ \ \ \beta_2 (x,y,z) = \bm (x, \bm (y,z))
$$
coincide on the Zariski-dense subset $f(R) \times f(R) \times f(R) \subset A \times A \times A$. It follows that they coincide everywhere, yielding (II). All other conditions (including the fact that $1_A := f(1_R)$ is the identity element in $A$) are verified similarly.
\end{proof}

To complete the proof of the theorem, we will now consider separately the cases where $\Phi$ is of type $A_2, B_2,$ and $G_2.$

\vskip1mm

\noindent {\sc Case I:} $\Phi$ {\sc of Type} $A_2$ (cf. \cite{Kas}, Theorem 3)

\noindent We will use the standard realization of $\Phi$, described in \cite{Bour}, where the roots are of the form $\varepsilon_i - \varepsilon_j$,
with $i,j \in \{1, 2, 3 \}, i \neq j.$ We will write $e_{ij} (t)$ to denote $e_{\alpha} (t)$ for $\alpha = \varepsilon_i - \varepsilon_j.$ Set $\alpha = \varepsilon_1 - \varepsilon_3,$ and define $A$ to be $A_{\alpha} = \overline{\rho (e_{\alpha} (R))}.$ As we observed earlier, $(A, \ba)$ is a commutative algebraic subgroup of $H$, where $\ba \colon A \times A \to A$ is the restriction of the product in $H$ to $A$.
Furthermore, we let $f = f_{\alpha} \colon R \to A$ be the map defined by
$t \mapsto \rho (e_{\alpha} (t)).$ Clearly
\begin{equation}\label{E:ARR202}
\ba (f(t_1), f(t_2)) = f(t_1 + t_2)
\end{equation}
for all $t_1, t_2 \in R.$ To define $\bm$, we need the following elements
$$
w_{12} = e_{12} (1) e_{21} (-1) e_{12} (1) \ \ \ {\rm and} \ \ \ w_{23} = e_{23} (1) e_{32} (-1) e_{23} (1).
$$
It is easily checked that
\begin{equation}\label{E:ARR203}
w_{12}^{-1} e_{13}(r) w_{12} = e_{23} (r), \ \ \ w_{23} e_{13} (r) w_{23}^{-1} = e_{12} (r)
\end{equation}
and
\begin{equation}\label{E:ARR204}
[e_{12} (r), e_{23} (s)] = e_{13} (rs)
\end{equation}
for all $r, s \in R$, where $[g, h] = g h g^{-1} h^{-1}.$ Define the regular map $\bm \colon A \times A \to H$ by
$$
\bm (a_1, a_2) = [\rho(w_{23}) a_1 \rho(w_{23})^{-1}, \rho(w_{12})^{-1} a_2 \rho(w_{12})].
$$
It follows from relations (\ref{E:ARR203}) and (\ref{E:ARR204}) that
\begin{equation}\label{E:ARR205}
\bm (f(t_1), f(t_2)) = f(t_1 t_2).
\end{equation}
Then, in particular, $\bm (f(R) \times f(R)) \subset f(R),$ implying that $\bm (A \times A) \subset A$, and allowing us to view $\bm$ as a regular map $\bm \colon A \times A \to A.$ Using (\ref{E:ARR202}) and (\ref{E:ARR205}) and applying Lemma \ref{L:ARR-1}, we conclude that $(A, \ba, \bm)$ is a commutative algebraic ring with identity. Finally, by our construction, (\ref{E:ARR101}) obviously holds for the inclusion map $\psi_{\alpha} \colon A \to H$, as required.

\vskip2mm

\noindent {\sc Case II:} $\Phi$ {\sc of Type} $B_2$

\noindent We will use the realization of $\Phi$ described in \cite{Bour} as the set of vectors $\pm \varepsilon_1, \pm \varepsilon_2, \pm \varepsilon_1 \pm \varepsilon_2$ in $\R^2$, where $\varepsilon_1, \varepsilon_2$ is the standard basis of $\R^2.$ Set $\alpha = \varepsilon_1$ and $\beta = \varepsilon_1 + \varepsilon_2.$ As we remarked earlier, it is enough to construct an algebraic ring $A$ with a ring homomorphism $f \colon R \to A$ having Zariski-dense image, and regular maps $\psi_{\alpha} \colon A \to H$ and $\psi_{\beta} \colon A \to H$ satisfying (\ref{E:ARR101}). In fact, we will show that one can take $A = A_{\alpha}$ and $f = f_{\alpha}.$ As in the previous case, for addition $\ba$, we simply take the restriction to $A \times A$ of the product $H \times H \to H$. To define the multiplication map $\bm$, we need to work simultaneously with $A_{\alpha}$ and $A_{\beta}.$

\begin{lemma}\label{L:ARR-2}
There exists an isomorphism $\pi \colon A_{\alpha} \to A_{\beta}$ of algebraic varieties such that $\pi \circ f_{\alpha} = f_{\beta}.$
\end{lemma}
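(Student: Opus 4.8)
Unlike in the $A_2$ case, the roots $\alpha = \varepsilon_1$ and $\beta = \varepsilon_1 + \varepsilon_2$ have different lengths, so they are not conjugate under $W(\Phi)$ and $\pi$ cannot be obtained simply by conjugating by an element of $G(R)^+$. The plan is instead to write down $\pi$ together with an \emph{explicit} regular inverse $\pi' \colon A_\beta \to A_\alpha$, both defined by transporting Steinberg commutator relations through $\rho$, and then to conclude that $\pi$ and $\pi'$ are mutually inverse from the fact that $f_\alpha(R)$ and $f_\beta(R)$ are Zariski-dense in $A_\alpha$ and $A_\beta$, respectively. The niceness hypothesis (here: $2 \in R^\times$, since $\Phi = B_2$) is used at several points.

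To construct $\pi$: in $B_2$ one has the single-term relation $[e_{\varepsilon_1}(t), e_{\varepsilon_2}(s)] = e_{\varepsilon_1 + \varepsilon_2}(N ts)$ for all $t, s \in R$, where $N = \pm 2$ (there is no contribution at $2\varepsilon_1 + \varepsilon_2$ or $\varepsilon_1 + 2\varepsilon_2$, as these are not roots). Choose $c \in R^\times$ with $Nc = 1$, and define the regular map $\pi \colon H \to H$ by $\pi(x) = [x, \rho(e_{\varepsilon_2}(c))]$. Then $\pi(f_\alpha(t)) = \rho(e_{\varepsilon_1 + \varepsilon_2}(t)) = f_\beta(t)$, so $\pi$ carries the dense subset $f_\alpha(R) \subset A_\alpha$ into $f_\beta(R) \subset A_\beta$; since $\pi$ is continuous and $A_\beta$ is closed, it restricts to a morphism $A_\alpha \to A_\beta$ with $\pi \circ f_\alpha = f_\beta$.

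The main obstacle is the construction of a regular inverse, since in $B_2$ one cannot recover a short-root element from a long one by a single-term commutator: the Chevalley commutator formula yields $[e_{-\varepsilon_2}(u), e_{\varepsilon_1 + \varepsilon_2}(t)] = e_{\varepsilon_1}(M_1 u t)\, e_{\varepsilon_1 - \varepsilon_2}(M_2 u^2 t)$ with $M_1, M_2 = \pm 1$ and an unavoidable factor in $U_{\varepsilon_1 - \varepsilon_2}$. To eliminate it I use that $U_{\varepsilon_1}$ and $U_{\varepsilon_1 - \varepsilon_2}$ commute (their sum is not a root) and that the offending parameter depends only on $u^2$: a short computation shows that the regular map $q(x) := [\rho(e_{-\varepsilon_2}(1)), x] \cdot [\rho(e_{-\varepsilon_2}(-1)), x]^{-1}$ satisfies $q(f_\beta(t)) = \rho(e_{\varepsilon_1}(2 M_1 t)) = f_\alpha(2 M_1 t)$, the two $U_{\varepsilon_1 - \varepsilon_2}$-contributions having cancelled (this uses $2 \neq 0$ again). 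Finally, since $\langle \varepsilon_1, (\varepsilon_1 - \varepsilon_2)^\vee \rangle = 1$, conjugation by $\rho(\tau)$ with $\tau = h_{\varepsilon_1 - \varepsilon_2}\big((2 M_1)^{-1}\big) \in G(R)^+$ rescales the $U_{\varepsilon_1}$-parameter by $(2 M_1)^{-1}$, so $\pi'(x) := \rho(\tau)\, q(x)\, \rho(\tau)^{-1}$ is a regular map with $\pi'(f_\beta(t)) = f_\alpha(t)$, and as before restricts to a morphism $A_\beta \to A_\alpha$. (One can avoid the torus element by instead normalizing the scalar through a Weyl-group conjugation; in any case all elements occurring lie in $G(R)^+$.)

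It remains to observe that $\pi' \circ \pi$ and $\mathrm{id}_{A_\alpha}$ are morphisms $A_\alpha \to A_\alpha$ that agree on the Zariski-dense subset $f_\alpha(R)$, because $(\pi' \circ \pi)(f_\alpha(t)) = \pi'(f_\beta(t)) = f_\alpha(t)$; hence $\pi' \circ \pi = \mathrm{id}_{A_\alpha}$, and symmetrically $\pi \circ \pi' = \mathrm{id}_{A_\beta}$. Therefore $\pi$ is an isomorphism of algebraic varieties with $\pi \circ f_\alpha = f_\beta$, as claimed.
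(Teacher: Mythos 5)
Your proof is correct and follows essentially the same route as the paper: $\pi(x)=[x,\rho(e_{\varepsilon_2}(1/2))]$ (up to the sign of the structure constant), an inverse built from the two commutators with $e_{-\varepsilon_2}(\pm 1)$ so that the unavoidable $U_{\varepsilon_1-\varepsilon_2}$-factors cancel (since their parameter depends only on $u^2$), a rescaling by a torus element using $2\in R^{\times}$, and the Zariski-density argument to see that the two maps are mutually inverse. The only cosmetic difference is your choice of $h_{\varepsilon_1-\varepsilon_2}$ rather than the paper's $h_{\varepsilon_1+\varepsilon_2}$ for the rescaling, which is immaterial since $\langle \varepsilon_1,\gamma^{\vee}\rangle=1$ for both roots $\gamma$.
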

\begin{proof}
Define $\pi \colon A_{\alpha} \to H$ by
$$
\pi(x) = [x, f_{\varepsilon_2}(1/2)]
$$
(recall that by our assumption, $1/2 \in R$).
The commutator relation
\begin{equation}\label{E:ARR108}
[e_{\varepsilon_1} (s), e_{\varepsilon_2}(t)] = e_{\varepsilon_1 + \varepsilon_2} (2st)
\end{equation}
shows that $\pi(\rho (e_{\varepsilon_1}(r))) = \rho (e_{\varepsilon_1 + \varepsilon_2} (r))$, i.e.
\begin{equation}\label{E:ARR220}
\pi (f_{\alpha} (r)) =f_{\beta} (r)
\end{equation}
for any $r \in R.$ In particular, $\pi (f_{\alpha} (R)) \subset f_{\beta} (R),$ and since $\pi$ is obviously regular, we obtain that $\pi (A_{\alpha}) \subset A_{\beta}.$
The inverse map $\nu \colon A_{\beta} \to A_{\alpha}$ is constructed as follows. Recall the following standard notations from (\cite{Stb1}, Chapter 3): for $t \in R^{\times}$ and any $\gamma \in \Phi,$
$$
w_{\gamma} (t) = e_{\gamma} (t) e_{-\gamma} (-t^{-1}) e_{\gamma} (t) \ \ \ {\rm and} \ \ \ h_{\gamma} (t) = w_{\gamma} (t) w_{\gamma} (-1).
$$
Now for $y \in A_{\beta}$, set
$$
\nu (y) = \rho(h_{\varepsilon_1 + \varepsilon_2}(1/2))[y, f_{-\varepsilon_2} (1)][y, f_{-\varepsilon_2} (-1) ]^{-1} \rho(h_{\varepsilon_1 + \varepsilon_2}(1/2))^{-1}.
$$
This clearly defines a regular map $\nu \colon A_{\beta} \to H.$
By direct computation, we have
$$
[e_{\varepsilon_1 + \varepsilon_2} (t), e_{- \varepsilon_2} (s)] = e_{\varepsilon_1}(ts) e_{\varepsilon_1 - \varepsilon_2} (-ts^2)
$$
for all $s, t \in R.$ Letting $s = \pm 1$ and using the fact that $e_{\varepsilon_1} (s)$ and $e_{\varepsilon_1 - \varepsilon_2} (t)$ commute, we obtain
$$
[e_{\varepsilon_1 + \varepsilon_2} (t), e_{- \varepsilon_2} (1)] [e_{\varepsilon_1 + \varepsilon_2} (t), e_{-\varepsilon_2} (-1)]^{-1} = e_{\varepsilon_1} (2t).
$$
Combining this with the relation
\begin{equation}\label{E:ARR109}
h_{\varepsilon_1 + \varepsilon_2} (1/2) e_{\varepsilon_1} (t) h_{\varepsilon_1 + \varepsilon_2} (1/2)^{-1} = e_{\varepsilon_1} (t/2).
\end{equation}
shows that
\begin{equation}\label{E:ARR221}
\nu (f_{\beta}(r)) = f_{\alpha}(r),
\end{equation}
hence $\nu (f_{\beta} (R)) \subset f_{\alpha} (R)$ and $\nu (A_{\beta}) \subset A_{\alpha}.$
It follows from (\ref{E:ARR220}) and (\ref{E:ARR221}) that $\nu \circ \pi$ and $\pi \circ \nu$ are the identity maps of $f_{\alpha} (R)$ and $f_{\beta} (R)$, respectively, so $\nu = \pi^{-1}.$ Also, by (\ref{E:ARR220}), $\pi$ is as required.
\end{proof}

We now define $\bm \colon A \times A \to H$, where $A = A_{\alpha},$ by
$$
\bm (u, v) = \nu ([u, v'']),
$$
where for $v \in A$ we set
$$
v'= \rho(h_{\beta}(1/2)) v \rho(h_{\beta}(1/2))^{-1}
$$
and
$$
v''= \rho(w_{-\varepsilon_1 + \varepsilon_2}(1)) v' \rho(w_{-\varepsilon_1 + \varepsilon_2} (1))^{-1},
$$
and $\nu$ is the inverse of the map $\pi$ constructed in Lemma \ref{L:ARR-2}.
The relations (\ref{E:ARR108}) and (\ref{E:ARR109}), combined with
$$
w_{-\varepsilon_1 + \varepsilon_2}(1) e_{\varepsilon_1} (t) w_{-\varepsilon_1 + \varepsilon_2} (1)^{-1} = e_{\varepsilon_2} (t)
$$
show that
\begin{equation}\label{E:ARR110}
\bm (f_{\alpha}(r), f_{\alpha}(s)) = f_{\alpha} (rs).
\end{equation}
In particular, $\bm_S (f_{\alpha} (R) \times f_{\alpha} (R)) \subset f_{\alpha} (R),$ implying that $\bm (A \times A) \subset A$, and allowing us to  view $\bm$ as a regular map $\bm \colon A \times A \to A.$ The fact that $f_{\alpha}$ is additive and satisfies (\ref{E:ARR110}) enables us to apply Lemma \ref{L:ARR-1} to conclude that $(A, \ba, \bm)$ is an algebraic ring. Furthermore, the inclusion map $\psi_{\alpha} \colon A \to H$ is as required. Finally, we define $\psi_{\beta} \colon A \to H$ to be $\psi_{\beta} = \pi \circ \psi_{\alpha}.$ Then it follows from Lemma \ref{L:ARR-2} that $\psi_{\beta}$ satisfies (\ref{E:ARR101}).

We note that using the maps $\pi$ and $\nu$ introduced in the proof of Lemma \ref{L:ARR-2}, one can directly define a multiplication $\bm_{\beta}$ on $A_{\beta}$ by setting
$$
\bm_{\beta} (a,b) = \pi (\bm (\nu (a), \nu(b))),
$$
so that $A_{\beta}$ also becomes an algebraic ring.

\vskip2mm

\noindent {\sc Case III:} $\Phi$ of type $G_2$

\noindent We will use the realization of $\Phi$ described in \cite{CK}: one can pick a system of simple roots $\{k, c \}$ in $\Phi$, where $k$ is long and $c$ is short, and then the long roots of $\Phi$ are
$\pm k, \pm (3c + k), \pm (3c + 2k),$ and the short roots are $\pm c, \pm (c+k), \pm (2c + k).$ Set $\alpha = k$ and $\beta = 2c + k.$ Since the long roots of $\Phi$ form a closed subsystem of type $A_2$
(cf. \cite{H1},  19.4), it follows that $A = A_{\alpha}$ is an algebraic ring, $f =f_{\alpha}$ is a ring homomorphism $R \to A$ with Zariski-dense image, and (\ref{E:ARR101}) holds if  $\psi_{\alpha} \colon A \to H$ is the inclusion map.
To construct $\psi_{\beta} \colon A \to H$ that also satisfies (\ref{E:ARR101}), we need the following analogue of Lemma \ref{L:ARR-2}.
\begin{lemma}\label{L:ARR-3}
There exists an isomorphism of algebraic varieties $\varkappa \colon A_{\alpha} \to A_{\beta}$ such that
$\varkappa \circ f_{\alpha} = f_{\beta}.$
\end{lemma}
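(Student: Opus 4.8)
The plan is to reproduce the proof of Lemma~\ref{L:ARR-2} almost verbatim, with the $B_2$ commutator identities replaced by their $G_2$ counterparts. Since the long roots $\{k, 3c+k, 3c+2k\}$ of $\Phi$ form a closed subsystem of type $A_2$ and the Weyl group acts transitively on roots of each length, one is free to conjugate $\rho(e_k(t))$ by elements $\rho(w_\gamma(1)) \in \rho(G(R)^+)$ with $\gamma$ long so as to move it to any long root; the only genuine step is a single ``jump'' from a long root to the short root $\beta = 2c+k$, to be effected by commutating with $\rho(e_\gamma(\mathrm{const}))$ for a short root $\gamma$ and conjugating by a suitable $\rho(h_\gamma(\mathrm{const}))$. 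Concretely, I would start from the Chevalley commutator formula in $G_2$ (\cite{St1}, \cite{Stb1}): for $k$ (long) and $c$ (short) it has the shape
\[
[e_k(t), e_c(u)] = e_{c+k}(C_1 t u)\,e_{2c+k}(C_2 t u^2)\,e_{3c+k}(C_3 t u^3)\,e_{3c+2k}(C_4 t^2 u^3),
\]
with structure constants $C_i \in \{\pm1,\pm2,\pm3\}$; here $e_{2c+k}$ is the unique factor whose exponent is quadratic in $u$, so by forming an appropriate product of such commutators for $u = \pm1$ and reordering factors (the commutators between the remaining $e_{c+k}, e_{3c+k}, e_{3c+2k}$ terms being explicitly known, and two of them commuting outright since $(3c+k)+(3c+2k)$ is not a root), exactly in the spirit of the combination $[\,\cdot\,,f_{-\varepsilon_2}(1)][\,\cdot\,,f_{-\varepsilon_2}(-1)]^{-1}$ used in Lemma~\ref{L:ARR-2}, one can isolate a single factor $e_{2c+k}(C\,t)$ with $C$ an explicit integer built from the $C_i$'s. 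Because $(\Phi,R)$ is a nice pair, $2,3 \in R^{\times}$, hence $C \in R^{\times}$, and after absorbing the factor $C$ (e.g. by a further conjugation by $\rho(h_{2c+k}(C^{-1}))$) one obtains a regular map $\varkappa \colon A_\alpha \to H$ with $\varkappa(f_\alpha(r)) = f_\beta(r)$ for all $r \in R$. This is precisely where the hypothesis $2, 3 \in R^{\times}$ is used.

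Having $\varkappa$, I would build its inverse by the symmetric procedure: a regular map $A_\beta \to H$, assembled from $G_2$ commutator relations (for instance using $[e_{2c+k}(t), e_{c+k}(u)] = e_{3c+2k}(E t u)$, which has a single term, together with conjugations by appropriate $w_\gamma$, $h_\gamma$) that sends $\rho(e_{2c+k}(t))$ back to $\rho(e_k(t))$, again dividing by the relevant structure constants — which is where $1/2$ and $1/3$ re-enter, consistently with the nice-pair requirement for $G_2$. Both composites are then the identity on the Zariski-dense subsets $f_\alpha(R) \subset A_\alpha$ and $f_\beta(R) \subset A_\beta$, hence the identity maps of $A_\alpha$ and $A_\beta$; thus $\varkappa$ is an isomorphism of algebraic varieties, and $\varkappa \circ f_\alpha = f_\beta$ holds already at the level of $R$-points by construction, hence everywhere by density.

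The main obstacle is entirely computational. The $G_2$ Chevalley commutator formula has substantially more terms than in the $B_2$ case — products running up to $3\gamma_1 + 2\gamma_2$ — so one must pick the roots and auxiliary elements carefully and track the precise structure constants and signs in order to verify that the normalizing factor $C$ is genuinely a unit of $R$ (and not, say, a zero divisor coming from an unlucky cancellation). Once a correct chain of commutator identities is pinned down, everything else — regularity of all the maps involved, the identity $\varkappa \circ f_\alpha = f_\beta$, and the density argument establishing invertibility of $\varkappa$ — is formally identical to the corresponding verifications in the proof of Lemma~\ref{L:ARR-2}.
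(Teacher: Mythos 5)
Your overall strategy is the same as the paper's: start from the explicit $G_2$ commutator formula $[e_k(s),e_c(t)]=e_{c+k}(\varepsilon_1 st)e_{2c+k}(\varepsilon_2 st^2)e_{3c+k}(\varepsilon_3 st^3)e_{3c+2k}(\varepsilon_4 s^2t^3)$, combine the commutators at $t=\pm 1$, normalize the resulting coefficient by conjugating with an $h_\gamma$ of a unit built from $2$ and $3$, construct the inverse symmetrically from a single-term relation between short roots, and conclude by Zariski density. Your inverse map (via $[e_{c+k},e_{2c+k}]=e_{3c+2k}(3\varepsilon_5 st)$ followed by a Weyl conjugation of the long root $3c+2k$ back to $k$) is a harmless variant of the paper's $\theta$, which instead conjugates $2c+k$ to $-c$ and uses $[e_{-c}(s),e_{c+k}(1)]=e_k(3\varepsilon_7 s)$.

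There is, however, one concrete step where the proposal as written does not go through. The product $[e_k(s),e_c(1)]\,[e_k(s),e_c(-1)]$ does \emph{not} reduce, after discarding the commuting $e_{3c+k}$ and $e_{3c+2k}$ factors and reordering, to a single factor $e_{2c+k}(Cs)$: pushing $e_{c+k}(-\varepsilon_1 s)$ past $e_{2c+k}(\varepsilon_2 s)$ produces, via $[e_{c+k},e_{2c+k}]=e_{3c+2k}(3\varepsilon_5 st)$, a residual factor $e_{3c+2k}(3\varepsilon_1\varepsilon_2\varepsilon_5 s^2)$ that is quadratic in $s$ and cannot be removed by further reordering or by any $h_\gamma$-conjugation (which only rescales arguments). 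The paper cancels it by multiplying in an additional commutator of two \emph{conjugates of $u$ itself}, namely $[\rho(w_1)u\rho(w_1)^{-1},\,\rho(h_\gamma(c_0))u\rho(h_\gamma(c_0))^{-1}]$, which on $u=\rho(e_k(s))$ equals $e_{3c+2k}(-3\varepsilon_1\varepsilon_2\varepsilon_5 s^2)$ by the relation $[e_{3c+k}(s),e_k(t)]=e_{3c+2k}(\varepsilon_6 st)$; being a word in conjugates of $u$, this extra factor is still a regular function of $u$, so $\varkappa$ remains regular. Your stated toolbox (products of $[e_k(s),e_c(\pm1)]$, reordering, and $h_\gamma$-normalization) omits this quadratic correction term, and you locate the risk in the wrong place: the danger is not that the normalizing constant fails to be a unit (it is $2\varepsilon_2$, a unit by niceness), but that a long-root term quadratic in $s$ survives the cancellation. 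Once that term is killed as above, the rest of your argument (regularity, the identity on $f_\alpha(R)$, and the density argument showing $\theta=\varkappa^{-1}$) matches the paper.
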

\begin{proof}
The following explicit forms of the Steinberg commutator relations were established in
(\cite{CK}, Theorem 1.1):
\begin{equation}\label{E:ARR160}
[e_{k}(s), e_{c} (t)] = e_{c+k} (\varepsilon_1 st) e_{2c + k} (\varepsilon_2 st^2) e_{3c + k} (\varepsilon_3 st^3) e_{3c + 2k} (\varepsilon_4 s^2 t^3),
\end{equation}
\begin{equation}\label{E:ARR161}
[e_{c+k} (s), e_{2c+k}(t)] = e_{3c+2k} (3 \varepsilon_5 st),
\end{equation}
\begin{equation}\label{E:ARR162}
[e_{3c+k}(s), e_{k} (t)] = e_{3c + 2k} (\varepsilon_6 st),
\end{equation}
where $\varepsilon_i = \pm 1.$\footnote{In the proof of Theorem 28 in \cite{Stb1}, the precise signs are computed for certain pairs of roots, but these may change under the action of the Weyl group.}
Using (\ref{E:ARR160}), we obtain
$$
[e_k(s), e_c(1)] [e_k (s), e_c(-1)] =
$$
$$
=e_{c+k} (\varepsilon_1 s) e_{2c+k} (\varepsilon_2 s) e_{3c + k} (\varepsilon_3 s) e_{3c+2k} (\varepsilon_4 s^2) e_{c+k} (-\varepsilon_1 s) e_{2c+k} (\varepsilon_2 s) e_{3c + k} (-\varepsilon_3 s) e_{3c+2k} (-\varepsilon_4 s^2).
$$
Since the terms $e_{3c+k} (-\varepsilon_3 s)$ and $e_{3c + 2k} (-\varepsilon_4 s^2)$ commute with all other terms, the last expression reduces to
$$
e_{c+k} (\varepsilon_1 s) e_{2c +k} (\varepsilon_2 s) e_{c+k}(-\varepsilon_1 s) e_{2c + k} (\varepsilon_2 s),
$$
which, using (\ref{E:ARR161}), can be written in the form
$$
e_{3c + 2k} (3 \varepsilon_5 \varepsilon_1 \varepsilon_2 s^2) e_{2c + k} (2 \varepsilon_2 s).
$$
Let $w_1 = w_c (1) \in G(R)^+$. Then it follows from (\cite{St1}, 3.8 relation (R4)), that $w_1 e_k ( s) w_1^{-1} = e_{ 3c+k} (\varepsilon s),$ where $\varepsilon = \pm 1.$ Set $\gamma = 3c+2k$ and $\delta = c.$ Then
$$
< k, \gamma > = 1 = <2c+k, \delta>,
$$
so for any $r \in R^{\times}$
$$
h_{\gamma} (r) e_k (s) h_{\gamma} (r)^{-1} = e_k (r s)
$$
and
$$
h_{\delta} (r) e_{2c+k} (s) h_{\delta} (r)^{-1} = e_{2c +k} (rs).
$$
by (\cite{St1}, 3.8 relation (R6)). Since by our assumption $2 \in R^{\times},$ we can define $\varkappa' \colon A_{\alpha} \to H$ by
$$
\varkappa' (u) = [\rho(w_1) u \rho(w_1)^{-1}, \rho(h_{\gamma} (-3 \varepsilon \varepsilon_1 \varepsilon_2 \varepsilon_5 \varepsilon_6)) u \rho (h_{\gamma} (-3 \varepsilon \varepsilon_1 \varepsilon_2 \varepsilon_5 \varepsilon_6))^{-1}] [u, f_c (1)][u, f_c (-1)]
$$
and then consider $\varkappa \colon A_{\alpha} \to H$ given by
$$
\varkappa (u) = \rho (h_{\delta} (\varepsilon_2/2)) \varkappa' (u) \rho (h_{\delta} (\varepsilon_2/ 2))^{-1}.
$$
Then $\varkappa$ is obviously a regular map, and the above computations imply that $\varkappa \circ f_{\alpha} = f_{\beta},$ hence $\varkappa (f_{\alpha} (R)) \subset f_{\beta} (R)$ and $\varkappa (A_{\alpha}) \subset A_{\beta}.$

The inverse map $\theta \colon A_{\beta} \to A_{\alpha}$ is constructed as follows. Applying an appropriate element of the Weyl group to (\ref{E:ARR161}), we obtain
$$
[e_{-c} (s), e_{c+k} (t)] = e_k (3 \varepsilon_7 st),
$$
with $\varepsilon_7 = \pm 1.$ Set $w_2 = w_{3c+k}(1).$ Then $w_2 e_{2c+k} (s) w_2^{-1} = e_{-c} (\varepsilon ' s),$ with $\varepsilon' = \pm 1.$ Since $3 \in R^{\times},$ we can define a regular map $\theta \colon A_{\beta} \to H$ by
$$
\theta (u) = \rho(h_{\alpha}(\varepsilon' \varepsilon_7/3)) [\rho(w_2) u \rho (w_2)^{-1}, f_{c+k} (1)] \rho (h_{\alpha}(\varepsilon' \varepsilon_7/3))^{-1}.
$$
Then by our construction $\theta \circ f_{\beta} = f_{\alpha},$ implying that $\theta (A_{\beta}) \subset A_{\alpha}.$ The compositions $\theta \circ \varkappa$ and $\varkappa \circ \theta$ are clearly the identity maps of $f_{\alpha} (R)$ and $f_{\beta} (R)$, respectively, hence $\theta = \varkappa^{-1}.$
\end{proof}

It follows from the lemma that $\psi_{\beta} := \varkappa \circ \psi_{\alpha}$ satisfies (\ref{E:ARR101}), as required.

\vskip2mm

As in Case II, we note that using the maps $\varkappa$ and $\theta$ introduced in the proof of Lemma \ref{L:ARR-3}, we can define a multiplication map $\bm_{\beta}$ on $A_{\beta}$ by setting
$$
\bm_{\beta} (u, v) = \varkappa (\bm (\theta(u), \theta(v))),
$$
and then $A_{\beta}$ becomes an algebraic ring.

\vskip2mm

\noindent {\bf Remark 3.5.} The referee has suggested the following uniform interpretation of the construction of the isomorphism between $A_{\alpha}$ and $A_{\beta}$ in Lemmas \ref{L:ARR-2} and \ref{L:ARR-3}: first one finds a word $S_{\alpha, \beta}$ in the free group such that $S_{\alpha, \beta} (e_{\alpha} (t), g_1, \dots, g_r) = e_{\beta} (t)$ for all $t \in R$, where $g_1, \dots, g_r$ are some fixed elements of $G(R)$; then, the isomorphism of algebraic varieties $A_{\alpha} \to A_{\beta}$ is given by $x \mapsto S_{\alpha, \beta} (x, \rho(g_1), \dots, \rho(g_r))$. Using these isomorphisms, one can then give a similar interpretation of the definition of the product operation.



\vskip5mm

\section{Steinberg groups}\label{S:StG}

The next step in the proof of the Main Theorem involves Steinberg groups, so we begin by recalling their definition. As in $\S$ \ref{S:ARR}, let $\Phi$ be a reduced irreducible root system of rank $\geq 2$, let $G$ be the corresponding universal Chevalley group scheme, and $e_{\alpha} \colon \mathbb{G}_a \to G$ be the one-parameter subgroup associated with $\alpha \in \Phi.$ Suppose $S$ is a commutative ring. It is well-known (cf. \cite{Stb1}, Chapter 3) that the elements $e_{\alpha} (s)$, for $\alpha \in \Phi$ and $s \in S$, satisfy the following relations:
\begin{equation}\label{E:StG103}
e_{\alpha} (s) e_{\alpha} (t) = e_{\alpha} (s+t)
\end{equation}
for all $s,t \in S$ and all $\alpha \in \Phi,$ and
\begin{equation}\label{E:StG104}
[e_{\alpha} (s), e_{\beta} (t)] = \prod e_{i \alpha + j \beta} (N^{i,j}_{\alpha, \beta} s^i t^j),
\end{equation}
for all $s,t \in S$ and all $\alpha, \beta \in \Phi,$ $\beta \neq - \alpha,$
where the product is taken over all roots of the form $i \alpha + j \beta,$ $i, j \in \Z^+$, listed in an arbitrary (but {\it fixed}) order, and the $N^{i,j}_{\alpha, \beta}$ are integers depending only on  $\Phi$ and the order of the factors in (\ref{E:StG104}), but not on the ring $S$.

\vskip2mm

\noindent {\bf Definition 4.1.} The {\it Steinberg group} $\tilde{G}(S)$ is the group with generators $\tilde{x}_{\alpha} (t)$, for all $t \in S$ and $\alpha \in \Phi$, subject to the relations
\vskip1mm

(R1) $\tilde{x}_{\alpha}(s) \tilde{x}_{\alpha}(t) = \tilde{x}_{\alpha} (s+t)$

\vskip1mm

(R2) $[\tilde{x}_{\alpha} (s), \tilde{x}_{\beta} (t)] = \prod \tilde{x}_{i \alpha + j \beta} (N^{i,j}_{\alpha, \beta} s^i t^j)$,

\vskip1mm

\noindent where $N^{i,j}_{\alpha, \beta}$ are the same integers as in (\ref{E:StG104}).

\vskip2mm

It follows from the definition and the relations (\ref{E:StG103}) and (\ref{E:StG104}) that there exists a surjective group homomorphism
$$
\pi_S \colon \tilde{G} (S) \to G(S)^+, \ \ \ \tilde{x}_{\alpha} (t) \mapsto e_{\alpha} (t).
$$
Let
$$
K_2 (\Phi, S) = \ker \pi_S.
$$
We note that the pair $(\tilde{G}(S), \pi_S)$ is functorial in the following sense: given a homomorphism of commutative rings
$f \colon S \to T$, there is a commutative diagram of group homomorphisms
$$
\xymatrix{ \tilde{G}(S) \ar[d]_{\pi_S} \ar[r]^{\tilde{F}} & \tilde{G}(T) \ar[d]^{\pi_T} \\ G(S)^+ \ar[r]^{F} & G(T)^+}
$$
where $F$ and $\tilde{F}$ are the natural homomorphisms induced by $f$, i.e. that map the generators as follows:
$F \colon e_{\alpha} (t) \mapsto e_{\alpha} (f(t))$ and $\tilde{F} \colon \tilde{x}_{\alpha} (t) \mapsto \tilde{x}_{\alpha} (f(t)).$

The goal of this section is twofold: first, for a given representation $\rho \colon G(R)^+ \to GL_n (K)$ and the associated algebraic ring $A$ (cf. $\S$ \ref{S:ARR}), we want to construct a representation $\tilde{\tau} \colon \tilde{G} (A) \to GL_n (K)$ of the corresponding Steinberg group such that $\tilde{x}_{\alpha} (a) \mapsto \psi_{\alpha} (a)$ for all $a \in A$, where $\psi_{\alpha}$ is the regular map from Theorem \ref{T:ARR-1}; second, we would like to investigate the structure of $\tilde{G}(A).$

\addtocounter{thm}{1}

\begin{prop}\label{P:StG-1}
Let $(\Phi, R)$ be a nice pair, $K$ an algebraically closed field, and $\rho \colon G(R)^+ \to GL_n (K)$ a representation. Furthermore, let $A$ and $f \colon R \to A$ be the algebraic ring and ring homomorphism associated to $\rho$ that were constructed in Theorem \ref{T:ARR-1}. Then there exists a group homomorphism $\tilde{\tau} \colon \tilde{G}(A) \to H \subset GL_n (K)$ such that $\tilde{\tau} \colon \tilde{x}_{\alpha} (a) \mapsto \psi_{\alpha} (a)$ for all $a \in A$ and all $\alpha \in \Phi$. Consequently, $\tilde{\tau} \circ \tilde{F} = \rho \circ \pi_R,$ where $\tilde{F} \colon \tilde{G}(R) \to \tilde{G}(A)$ is the homomorphism induced by $f.$
\end{prop}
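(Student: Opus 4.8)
The idea is to define $\tilde{\tau}$ directly on the generators of $\tilde{G}(A)$ by $\tilde{\tau}(\tilde{x}_{\alpha}(a)) = \psi_{\alpha}(a)$, where $\psi_{\alpha}$ is the regular map supplied by Theorem \ref{T:ARR-1}, and to verify that these assignments respect the defining relations (R1) and (R2) of Definition 4.1 with the algebraic ring $A$ in place of $S$; by the universal property of a group given by a presentation, $\tilde{\tau}$ then extends uniquely to a group homomorphism $\tilde{G}(A) \to H$. Thus it suffices to prove
\[
\psi_{\alpha}(a)\,\psi_{\alpha}(b) = \psi_{\alpha}(a + b) \quad\text{and}\quad [\psi_{\alpha}(a), \psi_{\beta}(b)] = \prod \psi_{i\alpha + j\beta}\!\left(N^{i,j}_{\alpha,\beta}\, a^i b^j\right)
\]
for all $\alpha, \beta \in \Phi$ with $\beta \neq -\alpha$ and all $a, b \in A$, where $+$ and juxtaposition denote the operations $\ba$, $\bm$ of $A$, and the product is taken over the same roots $i\alpha + j\beta$, in the same fixed order, as in (\ref{E:StG104}). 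This makes sense because Theorem \ref{T:ARR-1} provides a regular map $\psi_{\gamma}$ for \emph{every} root $\gamma$, in particular for each $i\alpha + j\beta$.

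The crucial point is that both sides of each of these identities are \emph{regular} maps $A \times A \to H$: each $\psi_{\gamma}$ is regular, the group operations of $H \subseteq GL_n(K)$ are regular, and $\ba$, $\bm$ are regular, hence so are $(a,b) \mapsto a^i b^j$ and $x \mapsto N \cdot x$ (iterates of $\bm$, resp.\ of $\ba$ and the inversion $\iota$), and finite products of regular maps into $H$ are again regular. Since $\overline{f(R)} = A$, the subset $f(R) \times f(R)$ is Zariski-dense in $A \times A$ (over the algebraically closed field $K$ the closure of a product is the product of the closures). Because $H$ is a variety, two morphisms agreeing on a dense subset coincide, so it is enough to check the two displayed identities at points of the form $(a, b) = (f(s), f(t))$ with $s, t \in R$.

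At such points the identities are nothing but the relations (\ref{E:StG103}) and (\ref{E:StG104}) in $G(R)^+$ transported through $\rho$. Using (\ref{E:ARR101}) in the form $\psi_{\gamma}(f(u)) = \rho(e_{\gamma}(u))$, the additivity and multiplicativity of $f$, and the fact that $\rho$ is a homomorphism, relation (\ref{E:StG103}) gives $\psi_{\alpha}(f(s))\psi_{\alpha}(f(t)) = \rho(e_{\alpha}(s))\rho(e_{\alpha}(t)) = \rho(e_{\alpha}(s+t)) = \psi_{\alpha}(f(s) + f(t))$, which is the first identity at $(f(s), f(t))$; applying $\rho$ to (\ref{E:StG104}) and using $f(s^i t^j) = f(s)^i f(t)^j$ together with $f(N \cdot u) = N \cdot f(u)$ gives $[\psi_{\alpha}(f(s)), \psi_{\beta}(f(t))] = \prod \rho(e_{i\alpha + j\beta}(N^{i,j}_{\alpha,\beta} s^i t^j)) = \prod \psi_{i\alpha + j\beta}(N^{i,j}_{\alpha,\beta} f(s)^i f(t)^j)$, which is the second identity at $(f(s), f(t))$. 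By density the identities hold on all of $A \times A$, so $\tilde{\tau}$ is a well-defined homomorphism with $\tilde{\tau}(\tilde{x}_{\alpha}(a)) = \psi_{\alpha}(a)$. For the last assertion: $\tilde{F}$ maps $\tilde{x}_{\alpha}(t) \mapsto \tilde{x}_{\alpha}(f(t))$ and $\pi_R$ maps $\tilde{x}_{\alpha}(t) \mapsto e_{\alpha}(t)$, so $(\tilde{\tau} \circ \tilde{F})(\tilde{x}_{\alpha}(t)) = \psi_{\alpha}(f(t)) = \rho(e_{\alpha}(t)) = (\rho \circ \pi_R)(\tilde{x}_{\alpha}(t))$ for all $\alpha$ and $t$, and since these elements generate $\tilde{G}(R)$ we conclude $\tilde{\tau} \circ \tilde{F} = \rho \circ \pi_R$.

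There is no genuinely hard step here: the whole argument is a density-plus-regularity bookkeeping sitting on top of Theorem \ref{T:ARR-1}. The only points that must not be glossed over are (a) that \emph{every} map occurring in (R1) and (R2)---in particular the integer multiples $x \mapsto N^{i,j}_{\alpha,\beta}\cdot x$ and the monomials $(a,b)\mapsto a^i b^j$---is regular, so that the equalizer argument applies, and (b) that $f(R) \times f(R)$ is dense in the \emph{product} $A \times A$, not merely that $f(R)$ is dense in $A$. It is worth noting that the injectivity of the $\psi_{\alpha}$ plays no role in this proposition; what is used is the Zariski-density of $f(R)$ in $A$.
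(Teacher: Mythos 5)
Your proposal is correct and follows essentially the same route as the paper: define $\tilde{\tau}$ on generators by $\tilde{x}_{\alpha}(a) \mapsto \psi_{\alpha}(a)$, verify (R1) and (R2) as identities of regular maps $A \times A \to H$ by checking them on the Zariski-dense subset $f(R) \times f(R)$ via (\ref{E:ARR101}) and the relations (\ref{E:StG103})--(\ref{E:StG104}), and then deduce $\tilde{\tau} \circ \tilde{F} = \rho \circ \pi_R$ on generators. The extra care you take with the regularity of $(a,b) \mapsto N^{i,j}_{\alpha,\beta} a^i b^j$ and with density of the product set is exactly what the paper leaves implicit.
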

\begin{proof}
To establish the existence of $\tilde{\tau},$ we need to show that relations (R1) and (R2) are satisfied if the $\tilde{x}_{\alpha} (a)$'s are replaced by $\psi_{\alpha} (a)$'s. First, let $a = f(s), b= f(t)$, with $s, t \in R.$ Since $\rho$ is a homomorphism and by Theorem \ref{T:ARR-1}, $\rho \circ e_{\alpha} = \psi_{\alpha} \circ f$, we have, in view of (\ref{E:StG103}), that
$$
\psi_{\alpha} (a) \psi_{\alpha} (b) = \rho (e_{\alpha} (t) e_{\alpha} (s)) = \rho (e_{\alpha} (t+s)) = \psi_{\alpha} (a + b).
$$
Thus, the two regular maps $A \times A \to H$ given by
$$
(a, b) \mapsto \psi_{\alpha} (a) \psi_{\alpha} (b) \ \ \ {\rm and} \ \ \ (a,b) \mapsto \psi_{\alpha} (a + b)
$$
coincide on $f(R) \times f(R).$ The Zariski density of the latter in $A \times A$ implies that they coincide everywhere, yielding (R1).

Similarly, using (\ref{E:StG104}), we see that the two regular maps $A \times A \to H$ defined by
$$
(a, b) \mapsto [ \psi_{\alpha} (a), \psi_{\beta} (b)] \ \ \ {\rm and} \ \ \ (a,b) \mapsto \prod \psi_{i \alpha + j \beta} (N^{i,j}_{\alpha, \beta} a^i b^j)
$$
coincide on $f(R) \times f(R),$ hence on $A \times A$, and (R2) follows. Finally, the maps $\tilde{\tau} \circ \tilde{F}$ and $\rho \circ \pi_R$ both send $\tilde{x}_{\alpha} (s),$ $s \in R$, to $\psi_{\alpha} (f(s)) = \rho (e_{\alpha} (s)) = (\rho \circ \pi_R) (\tilde{x}_{\alpha} (s)),$ so they coincide on $\tilde{G} (R).$

\end{proof}

Thus, we obtain that the diagram formed by the solid arrows in
\begin{equation}\label{D:StG-1}
\xymatrix{ \tilde{G}(R) \ar[d]_{\pi_R} \ar[r]^{\tilde{F}} & \tilde{G}(A) \ar[rrdd]^{\tilde{\tau}} \ar[d]^{\pi_A} \\ G(R) \ar[rrrd]_{\rho} \ar[r]^{F} & G(A) \ar@{.>}[rrd]^{\tau} \\ & & & H \\}
\end{equation}
commutes. The crucial part in the proof of the Main Theorem is basically to show that
$\tilde{\tau}$ descends to a homomorphism $\tau \colon G(A) \to H$ which makes the whole diagram commutative --- the precise statement is somewhat more technical and will be given in \S \ref{S:FI}.\footnote{Note that until \S 6, we treat $G(A)$ as an {\it abstract} group; the structure of an algebraic group on $G(A)$ will be discussed in \S 6, where it will also be shown that a map related to $\tau$ is actually a morphism of algebraic groups.}
Determining when such a $\tau$ exists obviously requires information about
$K_2 (\Phi, A) = \ker \pi_A$, which we derive from results of M.~Stein \cite{St2}, describing $K_2 (\Phi, S)$ for a commutative semilocal ring $S$. To give precise formulations, we first need to recall some standard notations.
For $\alpha \in \Phi$ and $u \in S^{\times}$, we define the following elements of  $\tilde{G}(S)$:
\begin{equation}\label{E:StG-20}
\tilde{w}_{\alpha} (u) = \tilde{x}_{\alpha} (u) \tilde{x}_{-\alpha} (-u^{-1}) \tilde{x}_{\alpha} (u) \ \ \ {\rm and} \ \ \  \tilde{h}_{\alpha}(u) = \tilde{w}_{\alpha} (u) \tilde{w}_{\alpha} (-1).
\end{equation}
Notice that the elements $w_{\alpha} (u) = \pi_S (\tilde{w}_{\alpha} (u))$ and $h_{\alpha} (u) = \pi_S ( \tilde{h}_{\alpha} (u))$ coincide with the ones introduced in the proof of Lemma \ref{L:ARR-2}. Now,
for $u, v \in S^{\times}$, the {\it Steinberg symbol} $\{ u, v \}_{\alpha}$ is defined as
$$
\{ u, v \}_{\alpha} = \tilde{h}_{\alpha} (uv) \tilde{h}_{\alpha} (u)^{-1} \tilde{h}_{\alpha} (v)^{-1}.
$$
We note that since the elements $h_{\alpha} (t)$ are multiplicative in $t$ (\cite{Stb1}, Lemma 28), all Steinberg symbols are contained in $K_2 (\Phi, S).$ Moreover, by
(\cite{St2}, Proposition 1.3(a)), the Steinberg symbols lie in the center of $\tilde{G}(S).$ Following Stein \cite{St2}, given a nonempty subset $P \subset S$, we let $\Z [P]$ denote the subring of $S$ generated by $P.$
With these notations, we have
\begin{thm}\label{T:StG-1}{\rm (cf. \cite{St2}, Theorem 2.13)}
Let $\Phi$ be a reduced irreducible root system of rank $\geq 2.$ If $S$ is a commutative semilocal ring with $S = \Z[S^{\times}]$, then
$K_2 (\Phi, S)$ is the central subgroup of $\tilde{G}(S)$ generated by the Steinberg symbols $\{ u, v \}_{\alpha}$, with $u,v \in S^{\times}$, for {\rm any} fixed long root $\alpha.$
\end{thm}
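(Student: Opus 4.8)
Since this is Stein's Theorem~2.13, the plan is to recall the strategy, which goes back to Matsumoto's computation of $K_2$ of a field and its extension to semilocal rings. Let $C \subseteq \tilde{G}(S)$ be the central subgroup generated by the symbols $\{u,v\}_\alpha$ for the fixed long root $\alpha$. One inclusion is immediate: since $t \mapsto h_\alpha(t)$ is a homomorphism $S^{\times} \to G(S)^+$ (a standard identity in Chevalley groups), each generator $\{u,v\}_\alpha$ maps under $\pi_S$ to $h_\alpha(uv) h_\alpha(u)^{-1} h_\alpha(v)^{-1} = 1$, so $C \subseteq \ker \pi_S = K_2(\Phi, S)$. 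The content is the reverse inclusion, i.e. that $\pi_S$ induces an isomorphism $\tilde{G}(S)/C \stackrel{\sim}{\longrightarrow} G(S)^+$.

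To that end I would analyze $\tilde{G}(S)$ via a Bruhat/Gauss-type decomposition. Using only the relations (R1), (R2) and the derived relations for the elements $\tilde{w}_\beta(u)$ and $\tilde{h}_\beta(u)$ of $(\ref{E:StG-20})$ (the conjugation formulas $\tilde{w}_\beta(u)\tilde{x}_\gamma(t)\tilde{w}_\beta(u)^{-1} = \tilde{x}_{s_\beta\gamma}(\pm u^{-\langle\gamma,\beta^\vee\rangle}t)$ and $\tilde{h}_\beta(u)\tilde{x}_\gamma(t)\tilde{h}_\beta(u)^{-1} = \tilde{x}_\gamma(u^{\langle\gamma,\beta^\vee\rangle}t)$, which hold in $\tilde{G}(S)$), one shows that $\tilde{G}(S)$ is generated by the unipotent subgroups $\tilde{U}^{\pm}$ generated by the $\tilde{x}_\beta(t)$ for $\beta$ ranging over the positive (resp.\ negative) roots, together with $\tilde{N} = \langle \tilde{w}_\beta(u)\rangle$, that $\tilde{H} := \langle \tilde{h}_\beta(u)\rangle$ is normal in $\tilde{N}$ with $\tilde{N}/\tilde{H} \cong W(\Phi)$, and that a Bruhat decomposition holds in $\tilde{G}(S)$. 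The point where semilocality enters is that it provides the analogous decomposition of $G(S)^+$ with the usual uniqueness properties, and that the restriction $\pi_S|_{\tilde{U}^+} \colon \tilde{U}^+ \to U^+(S)$ is an isomorphism --- injectivity following from the uniqueness of the Chevalley normal form $\prod_{\beta>0} \tilde{x}_\beta(t_\beta)$, which for positive roots is imposed exactly by (R1)--(R2). Comparing the two decompositions then forces any $g \in \ker \pi_S$ to lie in $\tilde{H}$ (write $g$ in Bruhat form, push the $\tilde{H}$-part to the right past $\tilde{U}^+$, and use $U^+(S) \cap H = \{1\}$ plus injectivity on $\tilde{U}^+$), so that $K_2(\Phi, S) = \ker(\pi_S|_{\tilde{H}} \colon \tilde{H} \to H)$ with $H = \pi_S(\tilde{H})$.

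It then remains to identify $\ker(\tilde{H}\to H)$ with $C$. Here one checks that the generators $\tilde{h}_\beta(u)$ satisfy, modulo symbols, precisely the relations that the $h_\beta(u)$ satisfy in $H$: the defect of multiplicativity $\tilde{h}_\beta(u)\tilde{h}_\beta(v)\tilde{h}_\beta(uv)^{-1}$ is the symbol $\{u,v\}_\beta$ (up to sign conventions), the $\tilde{h}_\beta(u)$ commute modulo $C$, and the ``dependence'' identities among the $h_\beta$ for distinct roots --- coming from $SL_2$-computations inside rank-one and rank-two subsystems --- lift to $\tilde{G}(S)$ modulo symbols. Hence $\ker(\tilde{H}\to H)$ is generated by the $\{u,v\}_\beta$ for \emph{all} roots $\beta$, and one collapses these to the single long root $\alpha$: if $\beta = w\alpha$ is long, then $\tilde{h}_\beta(u) = \tilde{n}\,\tilde{h}_\alpha(u)\,\tilde{n}^{-1}$ for a lift $\tilde{n}\in\tilde{N}$ of $w$, so by centrality of symbols $\{u,v\}_\beta = \{u,v\}_\alpha$; if $\beta$ is short, one works in a rank-two subsystem of type $B_2$ or $G_2$ (the $A_2$ case having only long roots) and uses the standard identities expressing $h_\beta$ through the $h$'s of long roots, together with bilinearity of the symbol, to rewrite $\{u,v\}_\beta$ in terms of symbols $\{\cdot,\cdot\}_\alpha$.

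I expect the main obstacle to be the passage $\ker\pi_S \rightsquigarrow \ker(\pi_S|_{\tilde{H}})$ --- that is, establishing the Bruhat/Gauss decomposition of $\tilde{G}(S)$ together with the injectivity of $\pi_S$ on $\tilde{U}^+$ over a general commutative semilocal ring, where the field case cannot simply be quoted; this is the technical heart of Stein's argument. The hypothesis $\mathrm{rk}\,\Phi \geq 2$ is essential both here (so that (R2) is nonempty and the unipotent subgroups carry the Chevalley normal form) and in the short-root step. The bookkeeping with the structure constants $N^{i,j}_{\alpha,\beta}$ and with the Weyl-group action on the $\tilde{w}_\beta(u)$ is routine but lengthy, and I would organize it by first disposing of the rank-two systems $A_2, B_2, G_2$ explicitly and then propagating the identities through subsystems, much as in $\S$~\ref{S:ARR}.
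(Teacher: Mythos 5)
The first thing to say is that the paper does not prove this statement at all: it is quoted verbatim from Stein (\cite{St2}, Theorem 2.13) and used as a black box (e.g.\ in Corollary \ref{C:StG-1}), so there is no internal proof to compare yours against; what follows is therefore an assessment of your sketch on its own terms.

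Your outline is the classical Matsumoto strategy for fields, and its load-bearing step is exactly the one you flag as the ``main obstacle'': a Bruhat decomposition of $G(S)^+$ with the usual uniqueness properties simply does not exist over a general semilocal ring. Already for $SL_2$ over a local ring $S$ with maximal ideal $\m \neq 0$, the element $e_{-\alpha}(t)$ with $t \in \m$, $t \neq 0$, lies in neither Bruhat cell (its lower-left entry is a nonzero nonunit), so the two cells do not cover the group and the passage from $\ker \pi_S$ to $\ker (\pi_S\vert_{\tilde{H}})$ cannot be carried out as written. This is not a presentational issue but the reason Stein's argument takes a different shape: he exploits the structure of semilocal rings --- reduction modulo the Jacobson radical to a finite product of fields, where Matsumoto's computation applies factor by factor (cf.\ his Lemma 2.12, which the present paper also uses), combined with control of the relative kernel coming from the congruence subgroup modulo the radical and the stability machinery that is the subject of his paper --- rather than a global Bruhat or Gauss normal form in $\tilde{G}(S)$. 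The hypothesis $S = \Z[S^{\times}]$ enters in generating the relevant relative subgroups by elementary data, not through a normal-form argument. The parts of your sketch that are sound and standard are the easy inclusion $C \subseteq \ker\pi_S$ (multiplicativity of $h_{\alpha}$), the injectivity of $\pi_S$ on $\tilde{U}^{+}$, and the final collapse of symbols at arbitrary roots to a single long root via the Weyl action, centrality, and the $B_2$/$G_2$ identities. So the proposal correctly identifies the architecture of the theorem and correctly locates its hard point, but that hard point is left genuinely unresolved, and the route proposed for it (Bruhat decomposition over $S$) would fail; as a self-contained proof there is a gap precisely where you predicted one.
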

Now our results in $\S$ \ref{S:AR} on the structure of algebraic rings immediately yield
\begin{cor}\label{C:StG-1}
Let $A$ be a connected commutative algebraic ring. Then $K_2 (\Phi, A)$ is a central subgroup of $\tilde{G}(A)$ generated by the Steinberg symbols $\{ u, v \}_{\alpha}$ for $u, v \in A^{\times}$ and any fixed long root $\alpha.$
\end{cor}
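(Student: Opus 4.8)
The plan is simply to verify that a connected commutative algebraic ring $A$ satisfies the hypotheses of Theorem \ref{T:StG-1} and then to quote that theorem verbatim. Recall that Stein's result applies to any commutative semilocal ring $S$ with $S = \Z[S^{\times}]$, so there are exactly two conditions to check for $S = A$: that $A$ is semilocal as an abstract ring, and that $A$ is generated as a (unital) ring by its group of units.

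The first condition is immediate from Lemma \ref{L:AR-1}, which asserts that every commutative algebraic ring is semilocal as an abstract ring (connectedness is not even needed here). For the second condition, this is precisely where connectedness enters: by Proposition \ref{P:AR-1}, the group of units $A^{\times}$ is a principal open subset $D(\chi)$ of $A$, and since $A$ is connected (hence irreducible as a variety) this open set is dense; Corollary \ref{C:AR-1} then packages this into the statement $A = A^{\times} - A^{\times}$. In particular every element of $A$ is a $\Z$-linear combination (indeed a difference) of units, so $\Z[A^{\times}] = A$, which is the required hypothesis $A = \Z[A^{\times}]$.

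With both hypotheses verified, Theorem \ref{T:StG-1} applies directly to $S = A$ and yields exactly the assertion of the corollary: $K_2(\Phi, A) = \ker \pi_A$ is the central subgroup of $\tilde{G}(A)$ generated by the Steinberg symbols $\{u,v\}_{\alpha}$, $u, v \in A^{\times}$, for any fixed long root $\alpha$. (Centrality of the symbols is part of Stein's statement, and was also recorded above via \cite{St2}, Proposition 1.3(a).)

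I do not expect any genuine obstacle in this argument — it is a formal consequence of the structural results of \S\ref{S:AR} combined with Stein's computation. The only point that deserves care is to make sure connectedness is used (it is essential: without it $A^{\times}$ need not be dense and $A = \Z[A^{\times}]$ can fail, cf. the ring $A = K \times \F_p$ of Remark 2.10), and that the ``fixed long root'' clause is transported unchanged from Theorem \ref{T:StG-1}.
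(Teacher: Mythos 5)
Your proposal is correct and is essentially identical to the paper's own proof: both verify semilocality via Lemma \ref{L:AR-1} and the condition $A = \Z[A^{\times}]$ via Corollary \ref{C:AR-1} (using connectedness), then invoke Theorem \ref{T:StG-1}.
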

\begin{proof}
Since $A$ is connected, we have $A = A^{\times} - A^{\times}$ (Corollary \ref{C:AR-1}), hence $A = \Z [A^{\times}]$. Furthermore, by Lemma \ref{L:AR-1}, any commutative algebraic ring is semilocal as an abstract ring. Thus, the statement follows directly from Theorem \ref{T:StG-1}.
\end{proof}

The centrality of $K_2 (\Phi, A)$ is critical for the proof of our main results. We first note the following finiteness statement.
\begin{prop}\label{P:StG-2}
Let $S$ be a finite commutative ring and $\Phi$ a reduced irreducible root system of rank $\geq 2.$ Then $\tilde{G}(S)$ and $K_2 (\Phi, S)$ are finite.
\end{prop}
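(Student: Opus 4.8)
The plan is to exploit the presentation of $\tilde G(S)$ directly: it is generated by the finitely many symbols $\tilde x_\alpha(t)$ with $\alpha\in\Phi$ and $t\in S$, and since $S$ is finite there are only finitely many generators. So $\tilde G(S)$ is a finitely generated group, and to conclude finiteness it suffices to bound its order, for which I would first control $G(S)^+$ and then $K_2(\Phi,S)=\ker\pi_S$.

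First I would observe that $G(S)^+$ is finite. Indeed $G(S)^+\subset G(S)=\mathcal G(S)$, and $G$ being an affine group scheme of finite type over $\Z$, the set of $S$-points $G(S)$ is finite whenever $S$ is finite (each $S$-point is determined by where finitely many algebra generators go, and $S$ has finitely many elements). Hence $G(S)^+$ is finite. It then remains to show $K_2(\Phi,S)$ is finite, since $|\tilde G(S)| = |K_2(\Phi,S)|\cdot|G(S)^+|$.

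For the finiteness of $K_2(\Phi,S)$ the key point is centrality together with finite generation. A finite commutative ring is semilocal, and by the structure theory of finite (hence artinian) rings we have $S = \Z[S^\times]$ after replacing $S$ by its decomposition into local factors — more precisely, I would reduce to the case where $S$ is local with maximal ideal $\mathfrak n$: if $S/\mathfrak n = \F_2$ or $\F_3$ one handles the small cases separately, while in general $S/\mathfrak n$ has a unit generating it additively, and units of $S$ lift units of $S/\mathfrak n$, so $S=\Z[S^\times]$; the general finite ring is a product of such local rings and $\tilde G$ of a product maps onto the product of the $\tilde G$'s with central, hence abelian-by-finite, kernel. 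Granting $S=\Z[S^\times]$, Theorem \ref{T:StG-1} applies and $K_2(\Phi,S)$ is the central subgroup generated by the finitely many Steinberg symbols $\{u,v\}_\alpha$, $u,v\in S^\times$ (there are finitely many since $S^\times$ is finite). A central subgroup generated by finitely many elements is a finitely generated abelian group; to see it is finite, note each generator $\{u,v\}_\alpha=\tilde h_\alpha(uv)\tilde h_\alpha(u)^{-1}\tilde h_\alpha(v)^{-1}$ has finite order because $\tilde h_\alpha$ takes values in a set that surjects onto the finite group $G(S)^+$ and the relevant symbols in $K_2$ of a finite ring are torsion — indeed $K_2(\Phi,S)$, being a quotient of $K_2$ of a finite ring by Stein's presentation, is a finite abelian group (e.g. one can bound the order of each symbol using the bimultiplicativity and skew-symmetry of Steinberg symbols over a finite ring, where $S^\times$ is a finite abelian group).

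The main obstacle is the last step: showing the centrally generated group $K_2(\Phi,S)$ is not merely finitely generated abelian but actually finite, i.e. that the Steinberg symbols are torsion. The cleanest route is to invoke that over a finite commutative ring $K_2$ (and its Chevalley analogue $K_2(\Phi,-)$) is a finite group — this follows from Stein's explicit description combined with the finiteness of $S$ and $S^\times$, since the symbol map factors through a finite quotient of $S^\times\otimes_\Z S^\times$. Once this is in hand, $\tilde G(S)$ is an extension of the finite group $G(S)^+$ by the finite group $K_2(\Phi,S)$, hence finite, completing the proof.
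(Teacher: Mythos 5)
Your reductions are fine up to the last step: $G(S)^+$ is finite, the problem reduces to $K_2(\Phi,S)$, one may pass to the local factors of the artinian ring $S$ (Stein's Lemma 2.12 in fact gives $K_2(\Phi,\prod S_i)=\prod K_2(\Phi,S_i)$ directly, and no discussion of residue fields $\F_2,\F_3$ is needed: in any local ring, $x\in\mathfrak{n}$ implies $x=1-(1-x)$ with $1-x$ a unit, so $S=\Z[S^{\times}]$ automatically), and Theorem \ref{T:StG-1} then shows $K_2(\Phi,S)$ is a central subgroup generated by the finitely many symbols $\{u,v\}_{\alpha}$, hence a finitely generated abelian group. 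The gap is the claim that these generators are torsion. Your first justification (that $\tilde{h}_{\alpha}$ takes values in a set surjecting onto the finite group $G(S)^+$) proves nothing: the symbols lie in the kernel of $\pi_S$, and an element of the kernel of a homomorphism onto a finite group need not have finite order. Your second justification --- that the symbol map factors through a finite quotient of $S^{\times}\otimes_{\Z}S^{\times}$ --- presupposes bimultiplicativity $\{u_1u_2,v\}_{\alpha}=\{u_1,v\}_{\alpha}\{u_2,v\}_{\alpha}$, which is not part of Theorem \ref{T:StG-1} (that theorem gives generation and centrality only) and is in general false for the long-root symbols when $\Phi$ is of type $C_n$ (this is exactly the discrepancy between $K_2$ and $K_2^{Sp}$ in Matsumoto's theorem). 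Finally, "invoke that $K_2(\Phi,S)$ is finite for finite $S$" is circular: the paper explicitly remarks that it could not locate a reference for this statement, which is why it supplies a proof.

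The paper closes the gap using only centrality and no relations among the symbols: from the central extension $1\to K_2(\Phi,S)\to\tilde{G}(S)\to G(S)^+\to 1$ it takes the initial segment of the Hochschild--Serre spectral sequence
$$
H^1(\tilde{G}(S),\Q/\Z)\longrightarrow \Hom(K_2(\Phi,S),\Q/\Z)\longrightarrow H^2(G(S)^+,\Q/\Z).
$$
The first term is finite because $\tilde{G}(S)^{\rm ab}$ is generated by finitely many elements of finite order; the last is finite because $H^2$ of a finite group with $\Q/\Z$-coefficients is annihilated by the group order $n$ and is a quotient of the finite group $H^2(G(S)^+,\mu_n)$. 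Hence the dual of $K_2(\Phi,S)$ is finite, and therefore so is $K_2(\Phi,S)$. This argument (essentially the finiteness of the Schur multiplier of a finite group) is the ingredient your proof is missing; the alternative of establishing enough relations among Stein's symbols over a finite local ring to bound their orders is substantially harder than the proposition itself.
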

\begin{proof}
Even though this result is probably known, we failed to find a direct reference, so we give a complete proof.
First observe that since $G$ is a group scheme of finite type over $\Z$ and $S$ is finite, the group $G(S)$ is finite, so the finiteness of $\tilde{G}(S)$ is equivalent to that of $K_2 (\Phi, S).$
Now, being finite, $S$ is artinian, and consequently is a finite direct product of local artinian rings
$$
S = \prod_{i=1}^r S_i
$$
(cf. \cite{At}, Theorem 8.7). Then by (\cite{St2}, Lemma 2.12), we have
$$
K_2 (\Phi, S) = \prod_{i=1}^r K(\Phi, S_i),
$$
which reduces the argument to the case of $S$ a local ring. Since the condition $S = \Z[S^{\times}]$ holds automatically for a local ring, we obtain from Theorem \ref{T:StG-1} that $K_2 (\Phi, S)$ is a central subgroup of $\tilde{G}(S).$ Now, in the case that
$G(S)$ and $\tilde{G}(S)$ are perfect groups (i.e. coincide with their commutator subgroups), our result follows from the fact that the universal central extension of a finite group is finite (in other words, the Schur multiplier of a finite group is finite). In the general case, we will imitate the proof of the finiteness of the Schur multiplier. More precisely, we consider the exact sequence
\begin{equation}\label{E:StG101}
1 \to K_2 (\Phi, S) \to \tilde{G}(S) \to G(S)^+ \to 1.
\end{equation}
and the corresponding initial segment of the Hochschild-Serre spectral sequence
\begin{equation}\label{E:StG102}
H^1 (\tilde{G}(S), \Q/ \Z) \to H^1 (K_2 (\Phi, S), \Q/ \Z)^{\tilde{G}(S)} \to H^2 (G(S)^+, \Q/ \Z),
\end{equation}
where all groups act trivially on $\Q/ \Z.$ Since $\tilde{G}(S)$ is finitely generated, with every generator having finite order, the group
$$
\tilde{G}(S)^{\rm ab} = \tilde{G}(S)/ [ \tilde{G}(S), \tilde{G}(S)]
$$
is finite. Since $H^1 (\tilde{G}(S), \Q/ \Z)$ is simply the dual group of $\tilde{G}(S)^{\rm ab},$ it is also finite.

Next, let us show that $H^2 (G(S)^+, \Q/ \Z)$ is finite. Let $n = \vert G(S)^+ \vert.$ The short exact sequence
$$
0 \to \mu_n \to \Q / \Z \stackrel{\times n}{\longrightarrow} \Q/ \Z \to 0,
$$
where $\mu_n = \frac{1}{n} \Z/ \Z$ and $\times n$ denotes multiplication by $n$, gives rise to the following exact sequence of cohomology groups
$$
H^2 (G(S)^+, \mu_n) \to H^2 (G(S)^+, \Q/ \Z) \stackrel{ \times n}{\longrightarrow} H^2(G(S)^+, \Q / \Z).
$$
It is well known that $H^2 (G(S)^+, \Q / \Z)$ is annihilated by multiplication by $n$ (see, for example, \cite{GS}, Corollary 3.3.8), so $H^2 (G(S)^+, \mu_n)$ surjects onto $H^2 (G(S)^+, \Q / \Z).$ On the other hand, $H^2 ( G(S)^+, \mu_n)$ is obviously finite, and the finiteness of $H^2 (G(S)^+, \Q/ \Z)$ follows.

Now we conclude that the middle term in (\ref{E:StG102}) is finite. Since, as we noted above, $K_2 (\Phi, S)$ is central in $\tilde{G}(S)$, this term coincides with the dual of $K_2 (\Phi, S).$ So, its finiteness implies that of $K_2 (\Phi, S)$, as required.
\end{proof}

Turning now to algebraic rings, we obtain

\begin{prop}\label{P:StG-3}
Suppose $A$ is a commutative algebraic ring over an algebraically closed field $K$ and $\Phi$ is a reduced irreducible root system of rank $\geq 2.$ Assume that $A$ satisfies {\rm (FG)} if $\text{char} \ K = p>0$.
Then $\tilde{G}(A) = \tilde{G} (A^{\circ}) \times P,$ where $P$ is a finite group.
\end{prop}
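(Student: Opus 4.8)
\noindent The plan is to show that the Steinberg construction turns the ring-theoretic splitting of $A$ provided by condition (FG) into a direct product decomposition of $\tilde{G}(A)$. First, $A$ is a commutative algebraic ring satisfying (FG) --- automatically so when $\mathrm{char}\, K = 0$ by Remark 2.16(2), and by hypothesis when $\mathrm{char}\, K = p > 0$ --- so Proposition \ref{P:AR-4}(ii) yields a decomposition $A = A^{\circ} \oplus C$ of algebraic rings in which $C \cong A/A^{\circ}$ is finite. In particular $A \cong A^{\circ} \times C$ as commutative unital rings; let $p_1 \colon A \to A^{\circ}$ and $p_2 \colon A \to C$ denote the two (unital) projections, and let $\Psi \colon \tilde{G}(A) \to \tilde{G}(A^{\circ}) \times \tilde{G}(C)$ be the homomorphism induced by $p_1$ and $p_2$ via functoriality of $\tilde{G}$. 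The whole proposition reduces to showing that $\Psi$ is an isomorphism: granting this, one sets $P := \tilde{G}(C)$, which is finite by Proposition \ref{P:StG-2} since $C$ is finite, and obtains $\tilde{G}(A) = \tilde{G}(A^{\circ}) \times P$, the two factors being realized inside $\tilde{G}(A)$ as the subgroups generated by the $\tilde{x}_{\alpha}((a,0))$ with $a \in A^{\circ}$ and by the $\tilde{x}_{\alpha}((0,c))$ with $c \in C$, respectively.

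To see that $\Psi$ is an isomorphism I would apply the five lemma to the morphism of short exact sequences induced by $p_1$ and $p_2$. The top row is the canonical sequence $1 \to K_2(\Phi, A) \to \tilde{G}(A) \stackrel{\pi_A}{\to} G(A)^{+} \to 1$; the bottom row is the product $1 \to K_2(\Phi, A^{\circ}) \times K_2(\Phi, C) \to \tilde{G}(A^{\circ}) \times \tilde{G}(C) \to G(A^{\circ})^{+} \times G(C)^{+} \to 1$ of the corresponding sequences for $A^{\circ}$ and $C$; and $\Psi$ is the middle vertical map, commutativity of the two squares being exactly the functoriality diagram recorded after Definition 4.1. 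The right-hand vertical map is an isomorphism because $G$ is an affine group scheme of finite type over $\Z$, so $G(A) = G(A^{\circ}) \times G(C)$ canonically, and under this identification the relation $e_{\alpha}((a,c)) = (e_{\alpha}(a), e_{\alpha}(c)) = (e_{\alpha}(a),1)\cdot(1,e_{\alpha}(c))$ shows that $G(A)^{+}$ corresponds exactly to $G(A^{\circ})^{+} \times G(C)^{+}$. The left-hand vertical map is precisely the natural comparison map for $K_2$ of a product of rings, which is an isomorphism by (\cite{St2}, Lemma 2.12) --- the same fact already invoked in the proof of Proposition \ref{P:StG-2}. The five lemma then forces $\Psi$ to be an isomorphism.

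The step I expect to need the most care --- and the reason for routing the argument through the exact sequences rather than writing down an inverse of $\Psi$ directly --- is the following. One is tempted to define $\tilde{G}(A^{\circ}) \times \tilde{G}(C) \to \tilde{G}(A)$ from the ``coordinate'' homomorphisms $\tilde{x}_{\alpha}(a) \mapsto \tilde{x}_{\alpha}((a,0))$ and $\tilde{x}_{\alpha}(c) \mapsto \tilde{x}_{\alpha}((0,c))$; each of these is readily checked to respect (R1) and (R2), but gluing them into a homomorphism out of the \emph{direct} product requires knowing that $\tilde{x}_{\alpha}((a,0))$ commutes with $\tilde{x}_{-\alpha}((0,c))$ in $\tilde{G}(A)$, whereas the Steinberg relations impose no condition whatsoever on commutators of root subgroups attached to \emph{opposite} roots. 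The five-lemma argument avoids this difficulty entirely, the only real inputs being Stein's product formula for $K_2$ and the essentially formal scheme-theoretic identity $G(A) = G(A^{\circ}) \times G(C)$; condition (FG) is used solely to manufacture the ring splitting $A \cong A^{\circ} \times C$ in the first place.
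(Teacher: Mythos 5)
Your proof is correct and is essentially the paper's: both derive the splitting $A = A^{\circ} \oplus C$ with $C$ finite from Proposition \ref{P:AR-4} (resp.\ Proposition \ref{P:AR-2} in characteristic zero), rest the product decomposition of the Steinberg group on Stein's Lemma 2.12, and obtain the finiteness of $P = \tilde{G}(C)$ from Proposition \ref{P:StG-2}. The only divergence is that the paper quotes Stein's Lemma 2.12 directly for $\tilde{G}(A) = \tilde{G}(A^{\circ}) \times \tilde{G}(C)$, so your five-lemma reduction to the $K_2$-statement --- though valid, and a reasonable precaution given the issue with opposite root subgroups that you flag --- is an extra layer the paper does not need.
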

\begin{proof}
Applying Proposition \ref{P:AR-2} if char $K = 0$ and Proposition \ref{P:AR-4} if char $K > 0$ (observe that the latter proposition applies since $A$ is assumed to satisfy (FG)), we conclude that $A = A^{\circ} \times C$, where $C$ is a finite ring.
So, by (\cite{St2}, Lemma 2.12), we have
$$
\tilde{G}(A) = \tilde{G}(A^{\circ}) \times \tilde{G}(C),
$$
and by Proposition \ref{P:StG-2}, $P := \tilde{G} (C)$ is finite.
\end{proof}
\begin{cor}\label{C:StG-2}
Suppose $R$ is an abstract commutative ring and $A$ is an algebraic ring over a field $K$ such that there exists an abstract ring homomorphism $f \colon R \to A$ with Zariski-dense image. Assume moreover that $R$ is noetherian if char $K > 0.$ Then $\tilde{G} (A) = \tilde{G}(A^{\circ}) \times P,$ where $P$ is a finite group.
\end{cor}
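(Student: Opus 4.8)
The plan is to obtain this as a direct consequence of Proposition~\ref{P:StG-3}, once we have checked that its hypotheses hold for $A$. Recall that $\Phi$ has rank $\geq 2$ and $K$ is algebraically closed by our standing conventions, so two things remain to be verified: that $A$ is \emph{commutative}, and that $A$ satisfies {\rm (FG)} when char $K > 0$.

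For commutativity, I would argue exactly as in the proof of Lemma~\ref{L:ARR-1}: since $R$ is commutative, the subset $f(R) \subset A$ is commutative, so the two regular maps $A \times A \to A$ given by $(x,y) \mapsto \bm(x,y)$ and $(x,y) \mapsto \bm(y,x)$ agree on $f(R) \times f(R)$, which is Zariski-dense in $A \times A$; hence they agree everywhere, and $A$ is a commutative algebraic ring. Now suppose char $K > 0$. Then $R$ is noetherian by hypothesis, and since $\overline{f(R)} = A$, Lemma~\ref{L:AR-3}(i) shows that $A$ satisfies {\rm (FG)}.

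With these two facts in hand, Proposition~\ref{P:StG-3} applies directly and yields $\tilde{G}(A) = \tilde{G}(A^{\circ}) \times P$ with $P = \tilde{G}(C)$ finite, where $A = A^{\circ} \oplus C$ is the decomposition provided by Proposition~\ref{P:AR-2} in characteristic zero and by Proposition~\ref{P:AR-4} in positive characteristic. There is no real obstacle in this argument; the only step that is not completely formal is the implication ``$R$ noetherian'' $\Rightarrow$ ``$A$ satisfies {\rm (FG)}'', which is needed to invoke Proposition~\ref{P:StG-3} in positive characteristic and is furnished by Lemma~\ref{L:AR-3}(i).
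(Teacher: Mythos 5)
Your proof is correct and follows the paper's own argument exactly: deduce (FG) from the noetherian hypothesis via Lemma~\ref{L:AR-3}(i) when char $K>0$, then invoke Proposition~\ref{P:StG-3}. Your explicit density argument for the commutativity of $A$ is a detail the paper leaves implicit, and it is a worthwhile check since Proposition~\ref{P:StG-3} is stated for commutative algebraic rings.
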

\begin{proof}
Note that if char $K > 0$, the fact that $R$ is noetherian implies that $A$ satisfies (FG) (see Lemma \ref{L:AR-3}).
So our assertion follows immediately from
Proposition \ref{P:StG-3}.
\end{proof}

\vskip1.5mm

\noindent {\bf Remark 4.8.} Using Lemma \ref{L:AR-5} in place of Propositions \ref{P:AR-4} and \ref{P:AR-2} in the above arguments, one can show that for {\it any} commutative algebraic ring $A$, one has the following direct product decomposition
\begin{equation}\label{E:411}
\tilde{G}(A) = \tilde{G}(A') \times P',
\end{equation}
where $A'$ is the subring (with identity) of $A$ consisting of all unipotent elements and $P'$ is a finite group. This can be viewed as a way to circumvent condition (FG) in the above statements. However, the structure of $A'$, and hence of $\tilde{G}(A')$, is in general difficult to describe, so the usefulness of (\ref{E:411}) in the investigation of (BT) is questionable.

\addtocounter{thm}{1}

\begin{cor}\label{C:StG-4}
Let $(\Phi, R)$ be a nice pair. Assume that there is an integer $m \geq 1$ such that $m R = \{ 0 \}.$ Let $K$ be an algebraically closed field such that char $K$ does not divide $m$, and suppose moreover that $R$ is noetherian if char $K > 0.$ If $\rho \colon G(R)^+ \to GL_n (K)$ is an abstract representation, then $\rho (G(R)^+)$ is finite.
\end{cor}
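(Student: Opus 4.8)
The plan is to combine the construction of Section~\ref{S:ARR} with the finiteness statement of Proposition~\ref{P:StG-2}, reducing the whole assertion to showing that the algebraic ring attached to $\rho$ is finite.

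First I would apply Theorem~\ref{T:ARR-1} to produce a commutative algebraic ring $A$ with identity together with an abstract ring homomorphism $f\colon R\to A$ having Zariski-dense image and satisfying (\ref{E:ARR101}). The next observation is that $mA=0$: the map $x\mapsto mx$ (the $m$-fold sum with respect to $\ba$) is an endomorphism of the algebraic group $(A,\ba)$, so its kernel is Zariski-closed; since $mR=0$ this kernel contains $f(R)$, which is dense, hence it is all of $A$. In particular $mA^{\circ}=0$.

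The heart of the matter is to show that $A$ is finite, which, since $A^{\circ}$ has finite index in $A$, amounts to proving $A^{\circ}=\{0_A\}$. By Lemma~\ref{L:AR-5} the connected algebraic ring $A^{\circ}$ is unipotent as an algebraic group under $\ba$. If ${\rm char}\,K=0$, then $(A^{\circ},\ba)\simeq(K^{d},+)$ for some $d$, which is torsion-free, so $mA^{\circ}=0$ forces $A^{\circ}=\{0_A\}$. If ${\rm char}\,K=p>0$, then $R$ is noetherian by hypothesis, so $A$ satisfies (FG) by Lemma~\ref{L:AR-3}, and Lemma~\ref{L:AR-4} provides an identity element $e\in A^{\circ}$; since a connected unipotent algebraic group over $K$ admits a filtration by closed subgroups with successive quotients isomorphic to $\mathbb{G}_a$, and $(K,+)$ is killed by $p$, the additive group $(A^{\circ},\ba)$ is killed by some power of $p$. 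Thus $e$ has $p$-power additive order, while $me=0$, and as $p\nmid m$ this gives $e=0_A$; hence every $x\in A^{\circ}$ satisfies $x=ex=0_A$, so $A^{\circ}=\{0_A\}$ and $A$ is finite. I expect this to be the only non-formal step, and it is exactly where the structure theory of Section~\ref{S:AR} (Lemmas~\ref{L:AR-3}, \ref{L:AR-4}, \ref{L:AR-5}) enters essentially, the characteristic~$p$ case being the delicate one.

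Once $A$ is finite, Proposition~\ref{P:StG-2} yields that the Steinberg group $\tilde{G}(A)$ is finite. Taking the homomorphism $\tilde{\tau}\colon\tilde{G}(A)\to H$ of Proposition~\ref{P:StG-1} and using the commutativity of diagram (\ref{D:StG-1}) together with the surjectivity of $\pi_R\colon\tilde{G}(R)\to G(R)^{+}$, one obtains
$$
\rho(G(R)^{+})=\rho(\pi_R(\tilde{G}(R)))=\tilde{\tau}(\tilde{F}(\tilde{G}(R)))\subseteq\tilde{\tau}(\tilde{G}(A)),
$$
and the right-hand side is finite. Therefore $\rho(G(R)^{+})$ is finite, which completes the argument.
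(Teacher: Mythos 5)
Your proposal is correct, and its overall skeleton — construct $A$ and $f$ via Theorem \ref{T:ARR-1}, deduce $mA=\{0\}$ by Zariski density, show $A^{\circ}=\{0_A\}$ so that $A$ is finite, and then push the finiteness of $\tilde{G}(A)$ through $\tilde{\tau}\circ\tilde{F}=\rho\circ\pi_R$ — is exactly that of the paper. The one place where you genuinely diverge is the key step $A^{\circ}=\{0_A\}$: the paper passes to $A^{\circ}/J$ ($J$ the Jacobson radical) and invokes Proposition \ref{P:AR-20}, which says $A^{\circ}/J$ is a product of copies of $(K,+,\cdot)$, so that $mA^{\circ}=0$ with $\mathrm{char}\,K\nmid m$ kills $A^{\circ}/J$ and hence $A^{\circ}$; you instead argue purely on the additive group, using Lemma \ref{L:AR-5} to see that $(A^{\circ},\ba)$ is unipotent and then playing the $m$-torsion against torsion-freeness (char $0$) or $p$-power torsion (char $p$). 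Your route is slightly more elementary in that it avoids the structure theory behind Proposition \ref{P:AR-20} (in particular Proposition \ref{P:AR-3}); in fact, in characteristic $p$ your detour through (FG), Lemma \ref{L:AR-4}, and the identity element $e$ is not even needed, since every element of a connected unipotent group over a field of characteristic $p$ already has $p$-power additive order, so $mx=0$ with $p\nmid m$ forces $x=0$ directly. The paper's route, on the other hand, is the one that stays uniform with the rest of \S\ref{S:FI}--\ref{S:R}, where the decomposition $A^{\circ}=\bar{B}\oplus J$ is used repeatedly.
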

\begin{proof}
Let $A$ and $f \colon R \to A$ be the algebraic ring and ring homomorphism associated to $\rho$ constructed in Theorem \ref{T:ARR-1}. Then $mA = \{0 \}$ as $m f(R) = \{0 \}$ and $f(R)$ is Zariski-dense in $A$. In particular $m A^{\circ} = \{0 \}.$ Now recall that by Proposition \ref{P:AR-20}, $A^{\circ} / J$ is a $K$-algebra, where $J$ denotes the Jacobson radical of $A^{\circ}.$ Then our assumption that $m$ is not divisible by char $K$ forces $A^{\circ}/ J = \{ 0 \}$,  hence $A^{\circ} = \{ 0 \}.$ Therefore, by Corollary \ref{C:StG-2}, $\tilde{G}(A) = P,$ a finite group. On the other hand, by Proposition \ref{P:StG-1}, the representation $\rho$ factors through $\tilde{G}(A),$ which proves the finiteness of $\rho (G(R)^+).$
\end{proof}

\vskip5mm

\section{Passage to a subgroup of finite index}\label{S:FI}

As we remarked earlier, following Proposition \ref{P:StG-1}, our general strategy for
completing the proof of the Main Theorem is to construct a homomorphism $\tau \colon G(A) \to H$ which makes the diagram (\ref{D:StG-1}) commute.
We begin this section by making that idea more precise.
So, as in previous sections, suppose $\Phi$ is a reduced irreducible root system of rank $\geq 2$ and $R$ is a commutative ring such that $(\Phi, R)$ is a nice pair. Let $K$ be an algebraically closed field, and assume that $R$ is noetherian if char $K > 0.$
Furthermore, let $G$ be the universal Chevalley-Demazure group scheme of type $\Phi$, and let $\rho \colon G(R)^+ \to GL_n (K)$ be a representation. By Theorem 3.1, we can associate to $\rho$ an algebraic ring $A$ and a ring homomorphism $f \colon R \to A$ with Zariski-dense image. Moreover,
our assumptions imply that we can write $A = A^{\circ} \oplus C$, with $C$ a finite ring (in fact, a finite quotient of $R$) --- see Propositions \ref{P:AR-4} and \ref{P:AR-2}.
Now recall that by Proposition \ref{P:StG-1}, there exists a group homomorphism $\tilde{\tau} \colon \tilde{G}(A) \to GL_n (K)$ such that $\tilde{\tau} \circ \tilde{F} = \rho \circ \pi_R,$ where $\tilde{F} \colon \tilde{G}(R) \to \tilde{G}(A)$ is the homomorphism of Steinberg groups induced by $f$, and $\pi_R \colon \tilde{G}(R) \to G(R)$ is the canonical homomorphism. On the other hand, by Corollary \ref{C:StG-2}, $\tilde{G}(A) = \tilde{G}(A^{\circ}) \times P,$ where $P = \tilde{G}(C)$ is a finite group. So, $\tilde{\Gamma} := \tilde{F}^{-1} (\tilde{G} (A^{\circ}))$ and $\Gamma := \pi_R(\tilde{\Gamma})$ are subgroups of finite index in $\tilde{G}(R)$ and $G(R)^+$, respectively, and moreover $F(\Gamma) \subset G(A^{\circ}).$ Letting $\tilde{\sigma}$ denote the restriction of $\tilde{\tau}$ to $\tilde{G}(A^{\circ})$, we obtain that
the solid arrows in
\begin{equation}\label{D:FI-1}
\xymatrix{ \tilde{\Gamma} \ar[d]_{\pi_R} \ar[r]^{\tilde{F}} & \tilde{G}(A^{\circ}) \ar[rrdd]^{\tilde{\sigma}} \ar[d]^{\pi_{A^{\circ}}} \\ \Gamma \ar[rrrd]_{\rho} \ar[r]^{F} & G(A^{\circ}) \ar@{.>}[rrd]^{\sigma} \\ & & & H^{\circ} \\}
\end{equation}
form a commutative diagram.

To prove the Main Theorem, we will show that under appropriate assumptions, there exists a group homomorphism $\sigma \colon G(A^{\circ}) \to H^{\circ}$ making the full diagram (\ref{D:FI-1}) commute.
Observe that it follows from our definitions that
if such $\sigma$ exists, then $\rho \colon G(R)^+ \to GL_n  (K)$ coincides on $\Gamma$ with the composition $\sigma \circ T$, where $T \colon G(R)^+ \to G(A^{\circ})$ is the group homomorphism induced by the ring homomorphism $t \colon R \to A^{\circ}$, defined as the composition of $f \colon R \to A$
with the canonical projection $s \colon A \to A^{\circ}.$

In this section, we will show that such a homomorphism $\sigma$ exists
if $R$ is a semilocal ring (Proposition \ref{P:FI-3}). Later, in \S 6, we will prove
that $\sigma$ also exists
if $H^{\circ}$ is reductive or if char $K = 0$ and the unipotent radical $U$ of $H^{\circ}$ is commutative (and, more generally, if $H^{\circ}$ satisfies condition (Z) introduced below). Towards this end, we will establish in this section a weaker statement, viz. that in these cases there exists a homomorphism $\bar{\sigma} \colon G(A^{\circ}) \to \bar{H}$ such that $\bar{\sigma} \circ \pi_{A^{\circ}} = \nu \circ \tilde{\sigma},$ where $Z(H^{\circ})$ is the center of $H^{\circ},$ $\bar{H} = H^{\circ}/ Z(H^{\circ})$, and $\nu \colon H^{\circ} \to \bar{H}$ is the canonical map (cf. Proposition \ref{P:FI-5}). Throughout this section, we will use the following result of Matsumoto:
\begin{lemma}\label{L:FI-100}{\rm (cf. \cite{M}, Corollary 2)}
Let $\Phi$ be a reduced irreducible root system of rank $\geq 2$ and let $G$ be the corresponding universal Chevalley-Demazure group scheme. If $S$ is a semilocal commutative ring, then $G(S) = G(S)^+.$
\end{lemma}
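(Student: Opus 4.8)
This is Matsumoto's theorem (it appears as Corollaire 2 of \cite{M}), and the plan is to reconstruct its proof; note that the statement is true for $\Phi$ of any rank $\geq 1$, which is what an induction on rank will require. The one feature of $S$ that makes everything work is that a commutative semilocal ring has \emph{stable rank one}: if $aS + bS = S$, then $a + tb \in S^{\times}$ for a suitable $t \in S$ --- choose, via the Chinese Remainder Theorem, a $t$ for which $a + tb$ lies outside every maximal ideal of $S$. Since $G(S)^{+}$ is by definition generated by the root elements $e_{\alpha}(t)$, the claim amounts to showing that any $g \in G(S)$ can be brought into $G(S)^{+} \cdot T(S)$ --- and in fact into $G(S)^{+}$ --- by multiplying it on the left and on the right by elements $e_{\alpha}(\ast)$; a Gaussian elimination inside $G$. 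The torus factor is then harmless: because $G$ is \emph{universal} (simply connected), $T(S) \simeq (S^{\times})^{\mathrm{rk}\,\Phi}$ is generated by the elements $h_{\alpha}(u)$ for $\alpha$ simple and $u \in S^{\times}$, and $h_{\alpha}(u) = w_{\alpha}(u)\, w_{\alpha}(-1)$ with $w_{\alpha}(u) = e_{\alpha}(u)\, e_{-\alpha}(-u^{-1})\, e_{\alpha}(u) \in G(S)^{+}$, so $T(S) \subset G(S)^{+}$.

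So everything comes down to the Gaussian elimination, which I would carry out by induction on $\mathrm{rk}\,\Phi$. The base case $\Phi = A_1$, $G = SL_2$, is transparent: given $g \in SL_2(S)$ with first column $(a, c)$, the relation $\det g = 1$ gives $aS + cS = S$, so stable rank one yields $t$ with $c + ta \in S^{\times}$, and after replacing $g$ by $e_{-\alpha}(t)\, g$ we may assume the lower-left entry $c$ is a unit; a short further sequence of left- and right-multiplications by $e_{\alpha}(\ast)$ and $e_{-\alpha}(\ast)$, with arguments in $S$ read off from $g$, then brings $g$ to $w_{\alpha}(-c^{-1}) = e_{\alpha}(-c^{-1})\, e_{-\alpha}(c)\, e_{\alpha}(-c^{-1}) \in G(S)^{+}$. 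For higher rank I would fix a maximal parabolic $P = L \cdot U$ of $G$ and argue that, over the semilocal ring $S$, any $g$ can be swept into $P(S) = L(S)\, U(S)$ by root-element multiplications --- here one applies stable rank one inside the rank-one subgroups $\langle e_{\gamma}, e_{-\gamma}\rangle$ attached to the roots $\gamma$ of $U$ --- and then invoke the inductive hypothesis on the semisimple part of $L$, which is again simply connected, hence a product of universal Chevalley groups of smaller rank, the central torus of $L$ contributing only elements absorbed into $T(S)$ as above.

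The hard part is precisely this inductive, parabolic step. Over a field one could simply invoke the Bruhat decomposition, but over a general semilocal ring the big cell does \emph{not} cover $G(S)$ and the Bruhat cells are not stable under the ring operations, so the passage of $g$ into $P(S)$ must be done explicitly, by a sequence of elementary operations chosen with care, and with repeated appeals to stable rank one. That bookkeeping is essentially the whole content of \cite{M}, Corollaire 2; by contrast the rank-one computation and the remark that $w_{\alpha}(u), h_{\alpha}(u) \in G(S)^{+}$ are routine.
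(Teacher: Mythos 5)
The paper does not prove this lemma at all: it is imported verbatim from Matsumoto (\cite{M}, Corollaire 2) and used as a black box, so there is no internal argument to compare yours against. Your outline is a faithful description of one standard route to that theorem, and the pieces you actually verify are correct: a commutative semilocal ring has stable rank one by the Chinese Remainder argument you give; $T(S)\subset G(S)^{+}$ because $G$ is universal, so $T(S)$ is generated by the $h_{\alpha}(u)$ for $\alpha$ simple, and $h_{\alpha}(u)=w_{\alpha}(u)w_{\alpha}(-1)$ is manifestly a product of root elements; and the $A_{1}$ computation is routine. But the decisive step --- that an arbitrary $g\in G(S)$ can be swept into $P(S)$ for a maximal parabolic $P$ by elementary operations over a semilocal ring --- is exactly what you defer back to \cite{M}. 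Since that reduction is the entire content of the statement, what you have written is a correct road map with the main leg of the journey outsourced, not a proof.

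It is worth noting that there is a complete and much shorter argument that avoids the parabolic induction entirely and uses only tools already present in the paper. Let $J$ be the Jacobson radical of $S$. Then $S/J\simeq\prod_{i}S/\mathfrak{m}_{i}$ is a finite product of fields, and for each field $k$ one has $G(k)=G(k)^{+}$ by the Bruhat decomposition (the torus and the Weyl representatives are elementary because $G$ is universal). The reduction $G(S)^{+}\to G(S/J)^{+}$ is surjective on generators, so for any $g\in G(S)$ there is $h\in G(S)^{+}$ with $h^{-1}g$ in the congruence subgroup $G(S,J)$. Finally, if $d\in\Z[G]$ is the function cutting out the big cell $\Omega=U^{-}TU^{+}$ (cf.\ Lemma \ref{L:R-1}), then $g\equiv 1 \pmod{J}$ forces $d(g)\in 1+J\subset S^{\times}$, so $G(S,J)\subset\Omega(S)=U^{-}(S)T(S)U^{+}(S)\subset G(S)^{+}$, using again that $T(S)\subset G(S)^{+}$. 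Hence $G(S)=G(S)^{+}$. This is essentially Matsumoto's own argument; your stable-rank-one elimination is the harder road, and it is the one you left unfinished.
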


\noindent In particular, we see that $G(A^{\circ}) = G(A^{\circ})^+$ since $A^{\circ}$ is semilocal by Lemma \ref{L:AR-1}, so the canonical homomorphism $\pi_{A^{\circ}} \colon \tilde{G}(A^{\circ}) \to G(A^{\circ})$ is surjective. Consequently, the existence of $\sigma$ is equivalent to the triviality of the restriction $\tilde{\sigma} \colon K_2 (\Phi, A^{\circ}) \to H^{\circ}.$

\begin{prop}\label{P:FI-3}
Suppose $R$ is a commutative semilocal ring. Then there exists a group homomorphism $\sigma \colon G(A^{\circ}) \to H^{\circ}$ making the diagram {\rm (\ref{D:FI-1})} commute. Moreover, if char $K = 0$, then $B:= A^{\circ}$ is a finite-dimensional $K$-algebra, and $\sigma$ can be viewed as a homomorphism $G(B) \to H^{\circ}$ such that the composition $\sigma \circ T$, where $T \colon G(R)^+ \to G(B)$ is induced by $t \colon R \to B$, coincides with $\rho$ on a subgroup $\Gamma \subset G(R)^+$ of finite index.
\end{prop}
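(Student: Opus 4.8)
The plan is to establish the existence of $\sigma$ by proving that the restriction of $\tilde{\sigma}$ to $K_{2}(\Phi, A^{\circ})$ is trivial, which — as observed just before the statement, using that $\pi_{A^{\circ}}$ is surjective by Lemma~\ref{L:FI-100} — is exactly what is needed. By Corollary~\ref{C:StG-1}, $K_{2}(\Phi, A^{\circ})$ is generated as a group by the Steinberg symbols $\{u, v\}_{\alpha}$ with $u, v \in (A^{\circ})^{\times}$ and $\alpha$ a fixed long root, so it suffices to show $\tilde{\sigma}(\{u, v\}_{\alpha}) = 1$ for all such $u, v$. Unwinding the definitions of $\tilde{w}_{\alpha}, \tilde{h}_{\alpha}$ and of $\tilde{\sigma} = \tilde{\tau}\vert_{\tilde{G}(A^{\circ})}$, and using that inversion on $(A^{\circ})^{\times}$ is regular (Proposition~\ref{P:AR-1}(ii)), the assignment $(u, v) \mapsto \tilde{\sigma}(\{u, v\}_{\alpha})$ defines a regular map $\Psi \colon (A^{\circ})^{\times} \times (A^{\circ})^{\times} \to H$. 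Its domain is irreducible, since $A^{\circ}$ is irreducible and $(A^{\circ})^{\times}$ is a dense open subset of it (Proposition~\ref{P:AR-1}(i)); hence it will be enough to show that $\Psi$ has finite image and takes the value $1$ at one point (e.g. at $(1_{A^{\circ}}, 1_{A^{\circ}})$, where the symbol is trivial), which forces $\Psi \equiv 1$.

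To control $\Psi$ I would evaluate it on the Zariski-dense subset $t(R^{\times}) \times t(R^{\times}) \subset (A^{\circ})^{\times} \times (A^{\circ})^{\times}$, where $t = s \circ f \colon R \to A^{\circ}$ and $s \colon A \to A^{\circ}$ is the canonical projection. Density is where the semilocal hypothesis on $R$ enters: $\overline{f(R)} = A$ and $s$ is a surjective morphism, so $\overline{t(R)} = A^{\circ}$, and then Corollary~\ref{C:AR-2} applied to $t$ (with $R$ semilocal and $A^{\circ}$ connected) gives $\overline{t(R^{\times})} = A^{\circ}$; since $t(R^{\times}) \subseteq (A^{\circ})^{\times}$, it is dense there. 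Now write $A = A^{\circ} \oplus C$ with $C$ finite (Propositions~\ref{P:AR-4} and \ref{P:AR-2}), so that $\tilde{G}(A) = \tilde{G}(A^{\circ}) \times \tilde{G}(C)$ by (\cite{St2}, Lemma 2.12) and correspondingly $K_{2}(\Phi, A) = K_{2}(\Phi, A^{\circ}) \times K_{2}(\Phi, C)$. For $a, b \in R^{\times}$, functoriality of Steinberg symbols identifies $\tilde{F}(\{a, b\}_{\alpha}^{R})$ with the pair $(\{t(a), t(b)\}_{\alpha}^{A^{\circ}}, \{\bar{a}, \bar{b}\}_{\alpha}^{C})$, where $\bar{a}$ denotes the image of $a$ under $R \to C$; applying $\tilde{\tau}$ and using $\tilde{\tau} \circ \tilde{F} = \rho \circ \pi_{R}$ together with $\pi_{R}(\{a,b\}_{\alpha}^{R}) = 1$ (the symbol lies in $K_{2}(\Phi, R) = \ker \pi_{R}$) yields
$$
\Psi(t(a), t(b)) = \tilde{\sigma}(\{t(a), t(b)\}_{\alpha}^{A^{\circ}}) = \tilde{\tau}\bigl(j_{2}(\{\bar{a}, \bar{b}\}_{\alpha}^{C})\bigr)^{-1},
$$
where $j_{2} \colon \tilde{G}(C) \hookrightarrow \tilde{G}(A)$ is the second factor inclusion. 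The right-hand side depends only on $(\bar{a}, \bar{b}) \in C \times C$, hence takes only finitely many values as $a, b$ range over $R^{\times}$. Therefore $\Psi$ has finite image, is constant by irreducibility of its domain, and equals its value $1$ at $(1_{A^{\circ}}, 1_{A^{\circ}})$; this proves $\tilde{\sigma}(\{u,v\}_{\alpha}) = 1$ for all $u, v$, i.e. that $\tilde{\sigma}$ kills $K_{2}(\Phi, A^{\circ})$.

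With this in hand, $\tilde{\sigma}$ factors through $\pi_{A^{\circ}}$, producing a group homomorphism $\sigma \colon G(A^{\circ}) \to H^{\circ}$ with $\sigma \circ \pi_{A^{\circ}} = \tilde{\sigma}$; the remaining commutativity $\sigma \circ F = \rho$ on $\Gamma$ follows by a diagram chase from $\pi_{A^{\circ}} \circ \tilde{F} = F \circ \pi_{R}$ and $\tilde{\tau} \circ \tilde{F} = \rho \circ \pi_{R}$, together with $\tilde{F}(\tilde{\Gamma}) \subseteq \tilde{G}(A^{\circ})$. For the last assertion, when char $K = 0$ Proposition~\ref{P:AR-2} and Lemma~\ref{L:AR-5} show that $A^{\circ}$ coincides with the subring of unipotent elements of $(A, \ba)$ and comes from a finite-dimensional $K$-algebra, so $B := A^{\circ}$ is such an algebra; viewing $\sigma$ as a map $G(B) \to H^{\circ}$ and letting $T \colon G(R)^{+} \to G(B)$ be induced by $t \colon R \to B$, the commutativity of (\ref{D:FI-1}) and the equality $F\vert_{\Gamma} = T\vert_{\Gamma}$ (valid since $F(\Gamma) \subseteq G(A^{\circ}) = G(B)$) give $\rho\vert_{\Gamma} = (\sigma \circ T)\vert_{\Gamma}$ on the finite-index subgroup $\Gamma \subset G(R)^{+}$.

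The point I expect to be the real obstacle is that $\Psi$ is \emph{not} identically $1$ on the dense subset $t(R^{\times}) \times t(R^{\times})$: there is an unavoidable contribution $\tilde{\tau}(j_{2}(\{\bar a, \bar b\}_{\alpha}^{C}))$ coming from the finite direct factor $\tilde{G}(C)$ of $\tilde{G}(A)$, and this need not vanish, since $\tilde{\tau}\vert_{j_{2}(\tilde{G}(C))}$ need not kill $K_{2}(\Phi, C)$. The device that saves the argument is that, $C$ being finite, this contribution takes only finitely many values, so $\Psi$ — a regular map from an irreducible variety — has finite image and is therefore constant. Everything else (regularity of the maps involved, functoriality of the Steinberg symbols, the density statement, and the diagram chases) is routine given the results of \S\ref{S:AR}--\S\ref{S:StG}; in particular the semilocality of $R$ is used only through Corollary~\ref{C:AR-2}.
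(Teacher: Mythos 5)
Your proof is correct, and it follows the paper's strategy almost exactly: reduce to killing the Steinberg symbols $\{u,v\}_{\alpha}$ via Corollary \ref{C:StG-1} and the surjectivity of $\pi_{A^{\circ}}$, observe that $(u,v)\mapsto\tilde{\sigma}(\{u,v\}_{\alpha})$ is regular thanks to Proposition \ref{P:AR-1}(ii), get density of $t(R^{\times})$ in $(A^{\circ})^{\times}$ from Corollary \ref{C:AR-2} (the only place semilocality of $R$ is used, in both arguments), and note that symbols coming from $R^{\times}$ die because they lie in $\ker\pi_R$. The one genuine divergence is how you dispose of the contribution of the finite factor $C$. The paper restricts to the finite-index subgroup $\Delta = R^{\times}\cap f^{-1}(A^{\circ}\times\{1\})$, shows $f(\Delta)$ is dense in $(A^{\circ})^{\times}\times\{1\}$, and thereby gets literal vanishing of the symbol map on a dense subset of $((A^{\circ})^{\times}\times\{1\})^{2}$, so that vanishing extends by closure. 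You instead evaluate on all of $t(R^{\times})^{2}$, accept the nonzero ``error term'' $\tilde{\tau}(j_{2}(\{\bar{a},\bar{b}\}_{\alpha}^{C}))^{-1}$, and use the rigidity statement that a regular map with finite image on an irreducible variety is constant, then pin down the constant at $(1_{A^{\circ}},1_{A^{\circ}})$ (where indeed $\tilde{h}_{\alpha}(1)=1$ since $\tilde{w}_{\alpha}(-1)=\tilde{w}_{\alpha}(1)^{-1}$). Your variant trades the small argument that $f(\Delta)$ is dense in $(A^{\circ})^{\times}\times\{1\}$ for the finite-image-implies-constant observation; both are sound, and yours has the mild advantage of not needing to locate a finite-index subgroup of $R^{\times}$ mapping into the connected factor. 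The remaining assertions (the diagram chase and the identification $B=A^{\circ}$ as a $K$-algebra in characteristic zero via Proposition \ref{P:AR-2}) are handled as in the paper.
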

\begin{proof}
Observe that by Corollary \ref{C:AR-2}, $t(R^{\times})$ is Zariski-dense in $(A^{\circ})^{\times},$ where as above, $t \colon R \to A^{\circ}$ is the composition of the homomorphism $f \colon R \to A$ with the projection $s \colon A \to A^{\circ}.$
Let $\Delta = R^{\times} \cap f^{-1} (A^{\circ} \times \{ 1 \}).$ Then $\Delta$ has finite index in $R^{\times},$ so since $(A^{\circ})^{\times}$ is irreducible, we obtain that $f(\Delta)$ is Zariski-dense in $(A^{\circ})^{\times} \times \{1 \}.$ Fix a long root $\alpha$, and for $u, v \in A^{\times}$, let $\{u, v \}_{\alpha}$ denote the corresponding Steinberg symbol. Clearly
$$
\tilde{\tau} ( \{u, v \}_{\alpha}) = H_{\alpha} (uv) H_{\alpha} (u)^{-1} H_{\alpha} (v)^{-1},
$$
where for $r \in A^{\times}$, we set
$$
H_{\alpha} (r) = {{W}}_{\alpha} (r) {{W}}_{\alpha} (-1) \ \ \ {\rm and} \ \ \ {{W}}_{\alpha} (r) = \psi_{\alpha} (r) \psi_{-\alpha} (-r^{-1}) \psi_{\alpha} (r).
$$
Now by Proposition \ref{P:AR-1}, the map $A^{\times} \to A^{\times}, t \mapsto t^{-1}$ is regular, hence the map
$\Theta \colon A^{\times} \times A^{\times} \to H,$ $(u,v ) \mapsto \tilde{\tau} (\{ u, v \}_{\alpha})$ is also regular. On the other hand, as we noted earlier,
for $a, b \in R^{\times},$ we have $h_{\alpha} (ab) = h_{\alpha} (a) h_{\alpha} (b)$ (see \cite{Stb1}, Lemma 28).
So, by Proposition \ref{P:StG-1},
$$
\tilde{\tau} (\{ f(a), f(b) \} ) = \rho (h_{\alpha} (ab) h_{\alpha} (a)^{-1} h_{\alpha} (b)^{-1}) = 1
$$
for all $a, b \in R^{\times}.$
Since the closure of $f(R^{\times})$ in $A^{\times}$ contains $(A^{\circ})^{\times} \times \{1 \},$ it follows that $\tilde{\tau}$ vanishes on $((A^{\circ})^{\times} \times \{ 1 \}) \times ((A^{\circ})^{\times} \times \{ 1 \} ).$ But according to Corollary \ref{C:StG-1}, $\ker \pi_{A^{\circ}}$ is generated by the Steinberg symbols $\{ u, v \}_{\alpha}$ for any fixed long root $\alpha \in \Phi.$ Thus, $\tilde{\sigma}$ vanishes on $\ker \pi_{A^{\circ}}$, implying that the required homomorphism $\sigma \colon G(A^{\circ}) \to H^{\circ}$ exists. The fact that $B := A^{\circ}$ is a finite-dimensional $K$-algebra if char $K = 0$ has already been established in Proposition \ref{P:AR-2}; the remaining assertions follow.
\end{proof}


In particular, if $R = k$ is an infinite field of characteristic $\neq 2$ or $3$, then $R$ is automatically semilocal, $(\Phi, R)$ is a nice pair,
and $G(R)^+$ does not have any subgroups of finite index (since it contains no proper noncentral normal subgroups --- cf. \cite{T1}),
so Proposition \ref{P:FI-3} proves the existence of the homomorphism $\sigma$ in (BT). Its rationality will be established in \S 6.

To consider the cases when $H^{\circ}$ is reductive or when the unipotent radical $U$ of $H^{\circ}$ is commutative, we will need some additional information about the structure of $H^{\circ}.$ We begin with
\begin{prop}\label{P:FI-1}
The group $H^{\circ}$ coincides with $\tilde{\sigma}(\tilde{G}(A^{\circ}))$ and is its own commutator.
\end{prop}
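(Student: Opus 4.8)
The plan is to prove the two assertions in turn, relying on the commutative diagram (\ref{D:FI-1}) and the structural results of the preceding sections. Set $M := \tilde{\sigma}(\tilde{G}(A^{\circ}))$; the goal is to show first that $M = H^{\circ}$ and then that $H^{\circ} = [H^{\circ}, H^{\circ}]$. For the first equality, one inclusion is a closedness argument. Since $\tilde{G}(A^{\circ})$ is generated by the $\tilde{x}_{\alpha}(a)$ with $\alpha \in \Phi$ and $a \in A^{\circ}$, the group $M$ is generated by the subsets $\psi_{\alpha}(A^{\circ})$. I would first check that each $\psi_{\alpha} \colon (A, \ba) \to H$ is a homomorphism of algebraic groups: by (\ref{E:ARR101}) and the fact that $\rho$ is a homomorphism, the two regular maps $A \times A \to H$ given by $(x,y) \mapsto \psi_{\alpha}(\ba(x,y))$ and $(x,y) \mapsto \psi_{\alpha}(x)\psi_{\alpha}(y)$ agree on the Zariski-dense subset $f(R) \times f(R)$, hence everywhere. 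Restricting to the connected algebraic group $A^{\circ}$ and using $\psi_{\alpha}(0_A) = \rho(e_{\alpha}(0)) = 1$, we conclude that $\psi_{\alpha}(A^{\circ})$ is a closed connected subgroup of $H$ containing the identity (\cite{Bo}, Corollary 1.4), hence contained in $H^{\circ}$. Since the subgroup of an algebraic group generated by a family of closed connected subgroups is again closed and connected (\cite{Bo}, Proposition 2.2), $M$ is a closed connected subgroup of $H^{\circ}$.

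For the reverse inclusion $H^{\circ} \subseteq M$, the commutativity of (\ref{D:FI-1}) gives $\rho(\Gamma) = \rho(\pi_R(\tilde{\Gamma})) = \tilde{\sigma}(\tilde{F}(\tilde{\Gamma})) \subseteq M$. Since $[G(R)^{+} : \Gamma] < \infty$, the subgroup $\rho(\Gamma)$ has finite index in $\rho(G(R)^{+})$, so its Zariski closure $\overline{\rho(\Gamma)}$ has finite index in $H = \overline{\rho(G(R)^{+})}$; being a closed subgroup of finite index, $\overline{\rho(\Gamma)}$ contains $H^{\circ}$. On the other hand $\overline{\rho(\Gamma)} \subseteq \overline{M} = M$ by the previous paragraph. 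Hence $H^{\circ} \subseteq M$, and combining the two inclusions, $H^{\circ} = M = \tilde{\sigma}(\tilde{G}(A^{\circ}))$.

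Finally, for perfectness the crucial input is that the Steinberg group $\tilde{G}(A^{\circ})$ itself is perfect. Because $(\Phi, R)$ is a nice pair and $s \circ f \colon R \to A^{\circ}$ is a ring homomorphism (so it carries units to units), the element $2$ is invertible in $A^{\circ}$ whenever $\Phi$ has a subsystem of type $B_2$, and $2, 3$ are invertible in $A^{\circ}$ if $\Phi = G_2$; using this together with the commutator relations (R2), for each root $\gamma$ one picks roots $\alpha, \beta$ with $\alpha + \beta = \gamma$ and rewrites the corresponding relation so as to exhibit every generator $\tilde{x}_{\gamma}(t)$ as a product of commutators of generators — this is the classical argument for perfectness of Steinberg groups of rank $\geq 2$. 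Applying $\tilde{\sigma}$ and using the first part, we get
$$
H^{\circ} = \tilde{\sigma}(\tilde{G}(A^{\circ})) = \tilde{\sigma}([\tilde{G}(A^{\circ}), \tilde{G}(A^{\circ})]) = [\tilde{\sigma}(\tilde{G}(A^{\circ})), \tilde{\sigma}(\tilde{G}(A^{\circ}))] = [H^{\circ}, H^{\circ}].
$$
The step I expect to require the most care is the perfectness claim, specifically the bookkeeping with the structure constants $N^{i,j}_{\alpha,\beta}$ in the multiply-laced root systems, which is exactly where the niceness hypothesis on $(\Phi, R)$ enters; by contrast, the two inclusions for $H^{\circ} = M$ are routine arguments with Zariski closures, density of $f(R)$, and finite-index subgroups.
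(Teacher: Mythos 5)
Your proposal is correct, and the first half (showing $\tilde{\sigma}(\tilde{G}(A^{\circ})) = H^{\circ}$) follows essentially the paper's route: identify the image with the subgroup generated by the $\psi_{\alpha}(A^{\circ})$, invoke (\cite{Bo}, Proposition 2.2) to get a closed connected subgroup of $H^{\circ}$, and then use a finite-index argument for the reverse inclusion. Your finite-index step is run slightly differently --- you pass through $\overline{\rho(\Gamma)}$, whereas the paper observes that $\tilde{\tau}(\tilde{G}(A))$ is a finite union of cosets of $\mathcal{H}$ (since $[\tilde{G}(A):\tilde{G}(A^{\circ})]<\infty$), hence closed, dense, and equal to $H$ --- but both rest on the same finiteness from Corollary \ref{C:StG-2} and both are valid. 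Where you genuinely diverge is the perfectness claim. The paper does \emph{not} use the niceness of $(\Phi,R)$ here: it verifies that $A^{\circ}$ is semilocal (Lemma \ref{L:AR-1}) and, crucially, that $A^{\circ}$ has no residue field isomorphic to $\mathbb{F}_2$ --- this follows from connectedness of $A^{\circ}$ together with the fact that all its ideals are Zariski-closed (Lemma \ref{L:AR-2}), so $A^{\circ}$ has no finite quotient rings at all --- and then cites Stein (\cite{St1}, Corollary 4.4). You instead transport the invertibility of $2$ (resp.\ $2,3$) from $R$ to $A^{\circ}$ via the unital homomorphism $s\circ f$ and run the classical abelianization computation with the relations (R2). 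Both work; your version uses a stronger hypothesis (niceness) that happens to be available, while the paper's version exploits the geometric structure of $A^{\circ}$ and would survive even without the invertibility assumptions. Your sketch of the multiply-laced bookkeeping is honest about being a sketch --- strictly speaking one shows the generators vanish in the abelianization by combining the relations for $t$ and $-t$ and dividing by $2$, rather than literally writing each $\tilde{x}_{\gamma}(t)$ as a single commutator of generators --- but this is the standard, correct argument.
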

\begin{proof}
Let $\psi_{\alpha} \colon A \to H$, $\alpha \in \Phi$, be the regular map constructed in Theorem \ref{T:ARR-1}. Then it follows from Proposition \ref{P:StG-1} that $\tilde{\sigma}(\tilde{G} (A^{\circ}))$ coincides with the (abstract) subgroup $\mathcal{H} \subset H$ generated by all the $\psi_{\alpha} (A^{\circ})$, with $\alpha \in \Phi.$ Since $\psi_{\alpha} (A^{\circ})$ is clearly a connected subgroup of $H$,
we have by (\cite{Bo}, Proposition 2.2) that
$\mathcal{H}$ is Zariski-closed and connected, hence $\mathcal{H} \subset H^{\circ}.$
Now by Corollary \ref{C:StG-2},
$[ \tilde{G}(A) : \tilde{G}(A^{\circ})] < \infty$,  from which it follows that $\tilde{\sigma} (\tilde{G}(A))$ is Zariski-closed. On the other hand, $\tilde{\sigma} (\tilde{G}(A))$ contains $\rho (G(R)^+)$, and therefore is Zariski-dense in $H$. Thus, $\tilde{\sigma} (\tilde{G}(A)) = H.$ So, $\mathcal{H}$ is a closed subgroup of finite index in $H$, hence $\mathcal{H} \supset H^{\circ}$ and consequently $\mathcal{H} = H^{\circ},$ proving our first claim. For the second claim, recall that $A^{\circ}$ is semilocal by Lemma \ref{L:AR-1}. Moreover, $A^{\circ}$ has no residue field isomorphic to $\mathbb{F}_2.$ Indeed, first observe that $A^{\circ}$ is a connected commutative algebraic with identity --- this is clear if char $K = 0$ and follows from our assumption that $R$ is noetherian and Lemmas \ref{L:AR-4} and \ref{L:AR-3} if char $K > 0.$ Hence by Lemma \ref{L:AR-2}, every abstract ideal $I \subset A^{\circ}$ is Zariski-closed, and therefore, the canonical map $A \to A/ I$ is a morphism of algebraic rings (\cite{Bo}, Theorem 6.8). Thus, the fact that $A^{\circ}$ is connected implies that it has no finite quotient rings, in particular no residue field isomorphic to $\mathbb{F}_2.$
So, since rank $\Phi \geq 2,$ it follows from (\cite{St1}, Corollary 4.4), that $\tilde{G}(A^{\circ})$ coincides with its commutator subgroup. Hence the same is true for $H^{\circ} = \tilde{\sigma} (\tilde{G}(A^{\circ})).$
\end{proof}

\begin{cor}\label{C:FI-5}
If $H^{\circ}$ has a Levi decomposition\footnotemark \ $H^{\circ} = U \rtimes S$, where $U$ is the unipotent radical of $H^{\circ}$ and $S$ is reductive, then $S$ is automatically semisimple. In particular, if $H^{\circ}$ is reductive, then it is semisimple and hence the center $Z(H^{\circ})$ is finite. \footnotetext{Recall that $H^{\circ}$ always has a Levi decomposition if char $K = 0$ (cf. \cite{Mos})}

\end{cor}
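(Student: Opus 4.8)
The plan is to derive the statement entirely from Proposition \ref{P:FI-1}, which asserts that $H^{\circ}$ coincides with its own (abstract) commutator subgroup. The only additional ingredient is the elementary fact that a perfect group admits no nontrivial homomorphism into an abelian group, together with two standard structural facts about reductive groups over $K$: that for reductive $S$ the quotient $S/[S,S]$ is a torus, and that $S$ is semisimple precisely when $S = [S,S]$ (equivalently, when the central torus $Z(S)^{\circ}$, i.e. the radical of $S$, is trivial).

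Assume first that $H^{\circ}$ has a Levi decomposition $H^{\circ} = U \rtimes S$ with $U$ the unipotent radical and $S$ reductive. Then the projection $p \colon H^{\circ} \to S$ with kernel $U$ is a surjective morphism of algebraic groups, and composing it with the canonical surjection $S \to S/[S,S]$ produces a surjective homomorphism $q \colon H^{\circ} \to S/[S,S]$ onto an abelian group. By Proposition \ref{P:FI-1} we have $H^{\circ} = [H^{\circ}, H^{\circ}]$, so $q(H^{\circ}) = q([H^{\circ},H^{\circ}]) \subseteq [S/[S,S], S/[S,S]] = \{1\}$; since $q$ is surjective this forces $S/[S,S] = \{1\}$, i.e. $S = [S,S]$, so $S$ is semisimple, as claimed. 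For the ``in particular'' part, a reductive group always admits the Levi decomposition with $U = \{1\}$ and $S = H^{\circ}$, regardless of the characteristic of $K$, so the preceding argument shows $H^{\circ}$ is semisimple. Finally, the center $Z(H^{\circ})$ of a semisimple algebraic group is finite --- for instance because $H^{\circ}$ is an almost-direct product of its simple components, each of which has finite center.

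I expect no serious obstacle here: the argument is immediate once Proposition \ref{P:FI-1} is in hand. The only points requiring a word of care are recording the standard facts about reductive groups quoted above (that $S/[S,S]$ is a torus which vanishes exactly when $S$ is semisimple, and that a semisimple group has finite center), and observing that the conclusion $S$ semisimple is independent of the choice of Levi subgroup $S$, since any two are conjugate.
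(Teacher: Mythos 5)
Your argument is correct and is essentially the paper's own proof: both deduce $S=[S,S]$ from $H^{\circ}=[H^{\circ},H^{\circ}]$ (Proposition \ref{P:FI-1}) via the projection onto the Levi factor, and then invoke the standard fact that a reductive group equal to its commutator subgroup is semisimple (the paper cites \cite{Bo}, Corollary 14.2), with finiteness of the center following at once. No gaps.
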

\begin{proof}
Since $H^{\circ} = [H^{\circ}, H^{\circ}]$, we have $S= [S, S],$ so $S$ is semisimple (\cite{Bo}, Corollary 14.2). In particular, if $H^{\circ}$ is reductive, then $H^{\circ} = S$ is semisimple, hence $Z(H^{\circ})$ is finite.
\end{proof}

As in the corollary, let $U$ be the unipotent radical of $H^{\circ}$ and $Z(H^{\circ})$ be the center of $H^{\circ}$. To give uniform statements of some results in \S 6, we introduce the following condition on $H^{\circ}$:

\vskip3mm

\noindent (Z)\ \  \parbox[t]{15cm}{$Z(H^{\circ}) \cap U = \{ e \}.$}

\vskip3mm

\begin{prop}\label{P:FI-2} \ \
\newline {\rm (i)}\parbox[t]{16cm}{Suppose $H^{\circ}$ satisfies {\rm (Z)}. Then $Z(H^{\circ})$ is finite. Moreover, if char $K = 0$, then $Z(H^{\circ})$ is contained in any Levi subgroup of $H^{\circ}$.}

\vskip1mm

\noindent {\rm (ii)}\parbox[t]{15.5cm}{Assume that char $K = 0$ and that $U$ is commutative. Then $H^{\circ}$ satisfies {\rm (Z)}.}
\end{prop}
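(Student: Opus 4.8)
Both parts will be deduced from two facts already available: $H^{\circ}$ is its own commutator subgroup (Proposition \ref{P:FI-1}), and in characteristic zero $H^{\circ}$ has a Levi decomposition $H^{\circ} = U \rtimes S$ with $S$ semisimple (Corollary \ref{C:FI-5} together with Mostow's theorem, as in the footnote there). I will also use repeatedly the elementary fact that a perfect reductive group is semisimple: if a reductive $G$ satisfies $G = (G,G)$, then $Z(G)^{\circ} \subseteq Z(G)^{\circ} \cap (G,G)$ is finite, hence trivial. For the finiteness in (i) (valid in any characteristic), I first apply this to the reductive quotient $H^{\circ}/R_u(H^{\circ})$, which is perfect as a quotient of $H^{\circ}$, hence semisimple; therefore the radical $R(H^{\circ})$, whose image in $H^{\circ}/R_u(H^{\circ})$ is the (now trivial) central torus, satisfies $R(H^{\circ}) = R_u(H^{\circ}) = U$. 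Since $Z(H^{\circ})^{\circ}$ is a connected abelian, hence solvable, normal subgroup, we get $Z(H^{\circ})^{\circ} \subseteq R(H^{\circ}) = U$, so by (Z) it lies in $Z(H^{\circ}) \cap U = \{e\}$, i.e. $Z(H^{\circ})$ is finite.

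\textbf{The Levi statement in (i).} Assume $\mathrm{char}\,K = 0$ and fix a Levi subgroup $S$, so $H^{\circ} = U \rtimes S$; put $Z = Z(H^{\circ})$, which is finite by the previous step and normalized by all of $H^{\circ}$. Then $M := S \cdot Z$ is a closed subgroup with $M^{\circ} = S$, hence reductive; as $U \trianglelefteq H^{\circ}$, the subgroup $M \cap U$ is a normal unipotent subgroup of $M$, therefore finite by reductivity and thus trivial (a vector group in characteristic zero has no nontrivial finite subgroups). Consequently the projection $\pi \colon H^{\circ} \to H^{\circ}/U$ is injective on $M$; since $\pi$ maps $S \subseteq M$ isomorphically onto $H^{\circ}/U$, this forces $M = S$, i.e. $Z \subseteq S$. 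Because any two Levi subgroups are conjugate and $Z$ is invariant under conjugation, $Z$ is then contained in every Levi subgroup.

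\textbf{Part (ii).} Now $\mathrm{char}\,K = 0$ and $U$ is commutative, so $U$ is a vector group on which the semisimple group $S$ acts linearly. By complete reducibility, write $U = U^{S} \oplus U'$ with $U'$ an $S$-submodule having no nonzero $S$-fixed vector. Then $U'$ is normalized by $S$ and, since $U$ is abelian, by $U$, hence is normal in $H^{\circ} = U\cdot S$, and $H^{\circ}/U' \cong U^{S} \times S$ with $S$ acting trivially on $U^{S}$. This quotient of the perfect group $H^{\circ}$ is perfect, so its abelianization, which is $U^{S}$ (recall $S$ is perfect), must be trivial. Finally, any $u \in Z(H^{\circ}) \cap U$ is fixed by $S$, hence $u \in U^{S} = \{e\}$; this is precisely condition (Z).

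\textbf{Main obstacle.} The routine verifications aside, the essential point is translating the purely group-theoretic input ``$H^{\circ}$ is perfect'' into information about $Z(H^{\circ})$ and $U$. The two devices that accomplish this are the reductive-quotient argument (which rules out a central torus, giving finiteness of $Z(H^{\circ})$ in all characteristics) and, for the sharper statements, the auxiliary reductive subgroup $M = S\cdot Z(H^{\circ})$ in (i) and the quotient $H^{\circ} \to H^{\circ}/U' \cong U^{S} \times S$ in (ii); once these are in place everything else follows from standard structure theory of algebraic groups.
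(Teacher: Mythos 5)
Your proof is correct and rests on the same two pillars as the paper's: the perfectness of $H^{\circ}$ (Proposition \ref{P:FI-1}) and, in characteristic zero, a Levi decomposition together with complete reducibility of the $S$-action on $U$. Part (ii) is essentially the paper's argument: the paper states tersely that perfectness forbids a trivial subrepresentation of $S$ on $U$, while you make this explicit via the quotient $H^{\circ}/U' \cong U^{S} \times S$. The two sub-steps of (i) are where you deviate. For finiteness, the paper notes that by (Z) the map $Z(H^{\circ}) \to Z(H^{\circ}/U)$ is injective and the target is finite because the perfect reductive group $H^{\circ}/U$ is semisimple, whereas you show $Z(H^{\circ})^{\circ} \subseteq R(H^{\circ}) = U$ and invoke (Z) directly --- arguments of the same depth. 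For the containment of $Z(H^{\circ})$ in a Levi subgroup, the paper simply quotes Mostow: a finite group in characteristic zero is reductive, hence a conjugate of it lies in $S$, and centrality makes the conjugation irrelevant. Your replacement --- forming $M = S \cdot Z(H^{\circ})$ and showing $M \cap U = \{e\}$ --- is more self-contained, needing only that unipotent groups in characteristic zero are torsion-free. One small imprecision: in part (i) the unipotent radical $U$ is not assumed commutative, so your parenthetical justification ``a vector group has no nontrivial finite subgroups'' should instead appeal to the torsion-freeness of arbitrary unipotent groups in characteristic zero (which is true, so the conclusion stands). Both routes are valid; yours trades the appeal to Mostow's conjugacy theorem for an elementary computation.
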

\begin{proof}
(i) The quotient $H^{\circ}/U$ is a reductive algebraic group that coincides with its commutator, so $Z(H^{\circ}/ U)$ is finite by (\cite{Bo}, Corollary 14.2). On the other hand, by (Z), the restriction to $Z(H^{\circ})$ of the canonical map $H^{\circ} \to H^{\circ}/U$ is injective, from which the finiteness of $Z(H^{\circ})$ follows. Now suppose char $K = 0$, and let $S$ be any Levi subgroup so that $H^{\circ} = U \rtimes S$. Since $Z(H^{\circ})$ is a finite abelian group, it is reductive, and therefore
a suitable conjugate of it is contained in $S$ (cf. \cite{Mos}). So, being central, $Z(H^{\circ})$ itself is contained in $S$.


\vskip1mm

(ii) Let $H^{\circ} = U \rtimes S$ be a Levi decomposition of $H^{\circ}.$ It follows from Remark 7.3 in \cite{Bo} that since char $K = 0$, we have $U \simeq (K^m, +)$ where $m = \dim U,$ and then the action of $S$ on $U$ yields a rational representation of $S$ on $K^m.$ To simplify notation, we will identify $U$ with $K^m$ for the rest of the proof. By Weyl's Theorem , the representation of $S$ on $U$ is completely reducible (see \cite{H1}, Theorem 6.3 and \cite{H2}, Theorem 13.2), and the fact that $H^{\circ} = [H^{\circ}, H^{\circ}]$ implies that it cannot contain the trivial representation. Thus, $U$ has no nonzero vectors fixed by the action of $S$, and consequently $Z(H^{\circ}) \cap U = \{ e \}.$
\end{proof}

Now set $\bar{H} = H^{\circ} / Z(H^{\circ})$.
Since $Z(H^{\circ})$ is a closed normal subgroup of $H^{\circ}$, by (\cite{Bo}, Theorem 6.8), $\bar{H}$ is an algebraic group and the canonical map $\nu \colon H^{\circ} \to \bar{H}$ is a morphism of algebraic groups. We let $\bar{\rho} = \nu \circ \rho.$ Since $H^{\circ} = \tilde{\sigma}(\tilde{G}(A^{\circ}))$ by Proposition \ref{P:FI-1}, and $K_2 (\Phi, A^{\circ}) = \ker \pi_{A^{\circ}}$ is a central subgroup of $\tilde{G}(A^{\circ})$ by Corollary \ref{C:StG-1}, it is clear that $\tilde{\sigma}$ descends to a homomorphism $\bar{\sigma} \colon G(A^{\circ}) \to \bar{H}$ such that $\bar{\sigma} \circ \pi_{A^{\circ}} = \nu \circ \tilde{\sigma}.$ We will now use this observation, in conjunction with our previous results on algebraic rings, to prove the following:

\begin{prop}\label{P:FI-5}
If either char $K = 0$ or $H^{\circ}$ is reductive, then $B := A^{\circ}$ is a finite-dimensional $K$-algebra, and $\bar{\sigma}$ can be viewed as a homomorphism $G(B) \to \bar{H}$ such that the composition $\bar{\sigma} \circ T$, where $T \colon G(R) \to G(B)$ is induced by $t \colon R \to B$, coincides with $\bar{\rho}$ on a subgroup $\Gamma \subset G(R)^+$ of finite index.
\end{prop}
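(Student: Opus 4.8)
The plan is to establish the two assertions of the proposition separately. Recall that immediately before the statement it was observed that $\tilde{\sigma}$ descends to an \emph{abstract} homomorphism $\bar{\sigma}\colon G(A^{\circ})\to\bar{H}$ with $\bar{\sigma}\circ\pi_{A^{\circ}}=\nu\circ\tilde{\sigma}$, and that $\Gamma:=\pi_R(\tilde{\Gamma})$ has finite index in $G(R)^{+}$ (Corollary \ref{C:StG-2}). Thus the content of the proposition is that $B:=A^{\circ}$ is a finite-dimensional $K$-algebra and that $\bar{\rho}$ and $\bar{\sigma}\circ T$ agree on $\Gamma$, where $T\colon G(R)^{+}\to G(A^{\circ})$ is the homomorphism induced by $t=s\circ f$; here $\bar{\sigma}$ ``viewed as'' a map $G(B)\to\bar{H}$ is literally the same map, so nothing is to be proved on that point.

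The equality $\bar{\rho}|_{\Gamma}=(\bar{\sigma}\circ T)|_{\Gamma}$ is a diagram chase in (\ref{D:FI-1}). Given $\gamma\in\Gamma$, pick $\tilde{\gamma}\in\tilde{\Gamma}$ with $\pi_R(\tilde{\gamma})=\gamma$; then, using that $\tilde{\sigma}\circ\tilde{F}=\rho\circ\pi_R$ on $\tilde{\Gamma}$ and $\pi_{A^{\circ}}\circ\tilde{F}=F\circ\pi_R$ (both from the commutativity of the solid part of (\ref{D:FI-1})) together with $\nu\circ\tilde{\sigma}=\bar{\sigma}\circ\pi_{A^{\circ}}$, one computes $\bar{\rho}(\gamma)=\nu(\rho(\gamma))=\nu(\tilde{\sigma}(\tilde{F}(\tilde{\gamma})))=\bar{\sigma}(\pi_{A^{\circ}}(\tilde{F}(\tilde{\gamma})))=\bar{\sigma}(F(\gamma))$. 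Since $F(\Gamma)\subset G(A^{\circ})$ and the canonical projection $G(A)\to G(A^{\circ})$ restricts to the identity there, $F|_{\Gamma}=T|_{\Gamma}$, which gives the claim.

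It remains to show that $B=A^{\circ}$ is a finite-dimensional $K$-algebra. If char $K=0$ this is exactly Proposition \ref{P:AR-2} (as already noted in the proof of Proposition \ref{P:FI-3}), so assume char $K=p>0$ and $H^{\circ}$ reductive. Then, by the standing hypothesis of this section, $R$ is noetherian, so $A$ satisfies (FG) (Lemma \ref{L:AR-3}(i)) and $A^{\circ}$ is a connected commutative artinian algebraic ring with identity (Lemmas \ref{L:AR-4} and \ref{L:AR-2}). Let $J$ be its Jacobson radical; it is nilpotent and Zariski-closed (Lemma \ref{L:AR-2}), and by Proposition \ref{P:AR-20}(ii), together with the fact (established inside the proof of Proposition \ref{P:FI-1}) that a connected $A^{\circ}$ has no nontrivial finite quotient ring, we get $A^{\circ}/J\simeq K^{n}$ as algebraic rings, $n=\dim A^{\circ}/J$, which is a $K$-algebra. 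Hence it suffices to show $J=0$; and since the $\psi_{\alpha}$ are injective (Theorem \ref{T:ARR-1}), it is enough to show $\psi_{\alpha}(J)=\{e\}$ for one root $\alpha$.

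This last step is where reductivity of $H^{\circ}$ is used, and I expect it to be the main obstacle. Since $H^{\circ}$ is reductive it is in fact semisimple with finite center (Corollary \ref{C:FI-5}), hence $\bar{H}=H^{\circ}/Z(H^{\circ})$ is semisimple. Let $E(A^{\circ},J)\subset G(A^{\circ})$ be the subgroup generated by all $G(A^{\circ})$-conjugates of the elements $e_{\alpha}(j)$ ($\alpha\in\Phi$, $j\in J$); it is contained in $\ker\bigl(G(A^{\circ})\to G(A^{\circ}/J)\bigr)$, which is nilpotent because $J$ is a nilpotent ideal (a standard fact for Chevalley groups, coming from the smoothness of $G$ over $\Z$). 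Therefore $M:=\overline{\bar{\sigma}(E(A^{\circ},J))}$ is a nilpotent subgroup of $\bar{\sigma}(G(A^{\circ}))=\bar{H}$, it is normal in $\bar{H}$ (as $E(A^{\circ},J)$, hence its $\bar{\sigma}$-image, is stable under conjugation by $\bar{\sigma}(G(A^{\circ}))=\bar{H}$), and it is connected: by (\cite{Bo}, Proposition 2.2), since $E(A^{\circ},J)$ is generated by the connected subsets $\bar{\sigma}(g)\,\nu(\psi_{\alpha}(J))\,\bar{\sigma}(g)^{-1}$, each through $e$ (here $\bar{\sigma}(e_{\alpha}(j))=\nu(\psi_{\alpha}(j))$, and $\nu\circ\psi_{\alpha}$ is a morphism of varieties with $J$ connected). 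A connected nilpotent group is a direct product $T'\times U'$ of a central torus and its unipotent radical; since $M$ is generated by unipotent subgroups — the conjugates of $\nu(\psi_{\alpha}(J))$, unipotent because $(J,\ba)$ is unipotent by Lemma \ref{L:AR-5} — their images in the torus $T'$ are trivial, so $M=U'$ is unipotent. But a semisimple group has no nontrivial connected normal unipotent subgroup, so $M=\{e\}$. In particular $\nu(\psi_{\alpha}(j))=\bar{\sigma}(e_{\alpha}(j))=e$ for all $j\in J$, i.e. $\psi_{\alpha}(J)\subset Z(H^{\circ})$; since $\psi_{\alpha}(J)$ is connected (the image of the connected group $(J,\ba)$) while $Z(H^{\circ})$ is finite, we get $\psi_{\alpha}(J)=\{e\}$. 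Hence $J=0$ and $A^{\circ}\simeq K^{n}$ is a finite-dimensional $K$-algebra, completing the proof.
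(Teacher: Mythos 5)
Your proof is correct. The diagram chase and the characteristic-zero case coincide with what the paper does (it disposes of them with a citation of Proposition \ref{P:AR-2} and ``the remaining assertions follow''); the real divergence is in how you obtain $J=\{0\}$ when $H^{\circ}$ is reductive. The paper simply quotes Lemma \ref{L:FI-1}, whose proof runs through the Steinberg group: $M=\ker(\tilde{G}(A^{\circ})\to\tilde{G}(A^{\circ}/J))$ is nilpotent (using the centrality of $K_2(\Phi,A^{\circ})$ plus the nilpotence of the congruence subgroup), $\tilde{\sigma}(M)$ is a closed connected normal nilpotent subgroup of the perfect group $H^{\circ}$ and hence lies in the unipotent radical $U$, and an explicit Steinberg-commutator filtration then yields $\psi_{\alpha}(J^{\ell+1})=\{1\}$ with $\ell$ the nilpotency class of $U$. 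You instead work directly with the relative elementary subgroup of $G(A^{\circ})$ and its image in $\bar{H}$, kill it using the semisimplicity of $\bar{H}$ together with the unipotence of its generators, and then correctly undo the passage through the finite center $Z(H^{\circ})$ via the connectedness of $\psi_{\alpha}(J)$. This is cleaner for the case at hand: you need neither the Steinberg group nor the commutator filtration, precisely because in the reductive case $\ell=0$; the trade-off is that Lemma \ref{L:FI-1} in its full strength ($J^{\ell+1}=\{0\}$) is reused later (Examples 6.8 and 6.9), so the paper cannot adopt your shortcut globally. One small inaccuracy: the nilpotence of $\ker(G(A^{\circ})\to G(A^{\circ}/J))$ does not come from smoothness of $G$ over $\Z$, but from fixing an embedding $G\hookrightarrow GL_d$ and using $[GL_d(A^{\circ},J^s),GL_d(A^{\circ},J^t)]\subset GL_d(A^{\circ},J^{s+t})$ together with the nilpotence of $J$ (exactly as in the proofs of Lemma \ref{L:FI-1} and Proposition \ref{P:R-3}); the fact itself is of course true.
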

We begin with the following
\begin{lemma}\label{L:FI-1}
Let $\ell$ be the nilpotency class of $U$. If $J$ is the Jacobson radical of $A^{\circ},$ then $J^{\ell + 1} = \{0 \}.$ In particular, if $H^{\circ}$ is reductive, then $J = \{ 0 \}.$
\end{lemma}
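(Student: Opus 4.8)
The plan is to obtain the bound $J^{\ell+1}=\{0\}$ by transporting the lower central series of the principal congruence subgroup of $G(A^\circ)$ to the unipotent radical $U$ through the homomorphism $\bar\sigma\colon G(A^\circ)\to\bar H$. Put $N:=\ker\bigl(G(A^\circ)\to G(A^\circ/J)\bigr)$, the principal congruence subgroup of level $J$. Since $A^\circ$ is connected, it has no nontrivial finite quotient ring, so the finite summand in Proposition \ref{P:AR-20} vanishes and $A^\circ/J\simeq(K,+,\cdot)^{m}$; as $G$ is smooth and $J$ is nilpotent (Lemma \ref{L:AR-2}), the map $G(A^\circ)\to G(A^\circ/J)$ is surjective, so $G(A^\circ)/N\simeq G(A^\circ/J)$ is semisimple and simply connected. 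Also $N$ is the group of $K$-points of a smooth connected unipotent $K$-group, hence nilpotent, connected, and divisible when $\mathrm{char}\,K=0$. Finally recall that $\bar\sigma$ is surjective (since $\pi_{A^\circ}$ and $\tilde\sigma$ are, by Lemma \ref{L:FI-100} and Proposition \ref{P:FI-1}), that $\bar\sigma(e_\alpha(a))=\nu(\psi_\alpha(a))$ for all $a\in A^\circ$ and $\alpha\in\Phi$, and that $\bar H$ has trivial center: if $z\in H^\circ$ has central image $\bar z$, then $h\mapsto[z,h]$ is a homomorphism $H^\circ\to Z(H^\circ)$ into an abelian group, hence trivial because $H^\circ=[H^\circ,H^\circ]$ (Proposition \ref{P:FI-1}), so $z\in Z(H^\circ)$.

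The key step is to prove $\bar\sigma(N)=\bar U$, where $\bar U=\nu(U)$ is the unipotent radical of $\bar H$. For the inclusion $\bar U\subseteq\bar\sigma(N)$: $\bar\sigma(N)$ is normal in $\bar H$ (as $\bar\sigma$ is surjective), and $\bar H/\bar\sigma(N)$ is a homomorphic image of the semisimple group $G(A^\circ/J)$, hence semisimple, so it contains no nontrivial connected normal unipotent subgroup; since the image of $\bar U$ there is such a subgroup, $\bar U\subseteq\bar\sigma(N)$. For the reverse inclusion: $\bar\sigma(N)$ is nilpotent, so $\overline{\bar\sigma(N)}$ is a closed normal nilpotent subgroup of $\bar H$ containing $\bar U$, and its image in the semisimple group $\bar H/\bar U$ is closed, normal and solvable, hence finite; therefore $\overline{\bar\sigma(N)}^{\circ}=\bar U$. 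In characteristic zero $\bar\sigma(N)$ is divisible, and a divisible subgroup of $\overline{\bar\sigma(N)}$ lies in its identity component, so $\bar\sigma(N)\subseteq\bar U$, giving equality. (When $H^\circ$ is reductive one has $\bar U=\{e\}$, and the above shows directly that the nilpotent normal subgroup $\bar\sigma(N)$ of the semisimple group $\bar H$ is finite.)

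Granting $\bar\sigma(N)=\bar U$, the homomorphism $\bar\sigma$ carries $\gamma_k(N)$ onto $\gamma_k(\bar U)$; using the standard fact that $e_\alpha(J^{k})\subseteq\gamma_k(N)$ (a consequence of the commutator formulas for elementary subgroups of Chevalley groups of rank $\geq 2$ over the nice pair $(\Phi,A^\circ)$), we get $\nu(\psi_\alpha(J^{k}))=\bar\sigma(e_\alpha(J^{k}))\subseteq\gamma_k(\bar U)$ for every $\alpha$ and $k$. Since $\bar U$ is a quotient of $U$, its nilpotency class is at most $\ell$, so taking $k=\ell+1$ yields $\psi_\alpha(J^{\ell+1})\subseteq\ker\nu=Z(H^\circ)$. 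It remains to see that $\psi_\alpha(A^\circ)\cap Z(H^\circ)=\{e\}$: if $\psi_\alpha(c)$ is central, it commutes with $\tilde\sigma(\tilde h_\alpha(u))\in H^\circ$ for every unit $u\in(A^\circ)^{\times}$, while relation (R6) (\cite{St1}, 3.8) gives $\tilde\sigma(\tilde h_\alpha(u))\,\psi_\alpha(c)\,\tilde\sigma(\tilde h_\alpha(u))^{-1}=\psi_\alpha(u^{2}c)$; hence $\psi_\alpha(u^{2}c)=\psi_\alpha(c)$, so $(u^{2}-1)c=0$ by the injectivity of $\psi_\alpha$ (Theorem \ref{T:ARR-1}), and choosing $u$ lifting a $\bar u\in K$ with $\bar u^{2}\neq1$ makes $u^{2}-1$ a unit and forces $c=0$. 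Therefore $\psi_\alpha(J^{\ell+1})=\{e\}$, and injectivity of $\psi_\alpha$ again gives $J^{\ell+1}=\{0\}$. The last assertion is the case $\ell=0$: if $H^\circ$ is reductive, then $J=\{0\}$.

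I expect the delicate point to be the identification $\bar\sigma(N)=\bar U$, and specifically the inclusion $\bar\sigma(N)\subseteq\bar U$: in positive characteristic (needed only for the reductive case) it has to be argued via finiteness rather than divisibility, and in the general characteristic-zero situation it is precisely what forces us to work with $\bar H$ instead of $H^\circ$. When $R$ is semilocal the center causes no trouble at all: then $\sigma$ itself exists (Proposition \ref{P:FI-3}) and the whole argument can be run with $\sigma$, $H^\circ$ and $U$ in place of $\bar\sigma$, $\bar H$ and $\bar U$.
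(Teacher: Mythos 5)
Your proof follows the same overall strategy as the paper's --- push the level-$J$ congruence kernel into the unipotent radical, filter it by the lower central series via $e_\alpha(J^k)\subseteq\gamma_k(N)$, and finish with the injectivity of $\psi_\alpha$ --- but the execution differs in two respects. First, the paper never passes to $\bar H$: it works with $\tilde\sigma$ on the Steinberg group and with $M=\ker\bigl(\tilde G(A^\circ)\to\tilde G(A^\circ/J)\bigr)$, observes that $M$ is the normal subgroup generated by the $\tilde x_\alpha(a)$, $a\in J$, with $J$ connected, so that $\tilde\sigma(M)$ is Zariski-closed, connected and nilpotent, hence lies in the radical of $H^\circ$, which equals $U$ because $H^\circ=[H^\circ,H^\circ]$. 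This lands directly in $U\subseteq H^\circ$ and gives $\psi_\alpha(J^{\ell+1})=\{1\}$ with no central ambiguity; your detour through $\bar\sigma$ and $\bar H$ forces the extra step $\psi_\alpha(J^{\ell+1})\subseteq Z(H^\circ)$, which you then correctly kill with the $\tilde h_\alpha(u)$-conjugation trick. Your route is viable, and the same triviality-of-$Z(\bar H)$ observation is exactly what you need (and have) to upgrade ``$\bar\sigma(N)$ finite'' to ``$\bar\sigma(N)$ trivial'' in the positive-characteristic reductive case, where divisibility is unavailable and $G(A^\circ)$ is not yet known to be an algebraic group (so your description of $N$ as the $K$-points of a smooth connected unipotent group is legitimate only in characteristic zero). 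The inclusion $\bar U\subseteq\bar\sigma(N)$ is both unnecessary for your argument and not rigorous as stated, since $\bar H/\bar\sigma(N)$ is only an abstract quotient; but only $\bar\sigma(N)\subseteq\bar U$ matters, and your closure-plus-divisibility argument for that is fine.

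The one step I would not let pass as ``standard'': the inclusion $e_\alpha(J^{k})\subseteq\gamma_k(N)$ is the computational heart of the paper's proof and the only place in this lemma where the nice-pair hypotheses enter. The paper proves $\tilde x_{\alpha}(J^i)\subseteq\mathcal{C}^{i-1}M$ for a suitable root $\alpha$ by explicit commutator calculus, case by case: for simply-laced systems via $[\tilde x_{\alpha}(s),\tilde x_{\beta}(t)]=\tilde x_{\alpha+\beta}(\pm st)$ and Weyl-group transitivity, for $G_2$ by restricting to the $A_2$ of long roots, and for $B_2$ via the identities (\ref{E:FI-1}) and (\ref{E:FI-2}), where invertibility of $2$ is essential. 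If you quote this as known, it must be quoted with those hypotheses (it is false for general $(\Phi,R)$); also note that one root of each relevant length suffices --- you assert the inclusion for every $\alpha$, which is more than is needed and, for $B_2$ and $G_2$, more than the paper establishes.
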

\begin{proof}
Let $\bar{A} = A^{\circ}/J.$ We have the following commutative diagram
$$
\xymatrix{ \tilde{G}(A^{\circ}) \ar[d]_{\pi_{A^{\circ}}} \ar[r]^{\gamma} & \tilde{G}(\bar{A}) \ar[d]^{\pi_{\bar{A}}} \\ G(A^{\circ}) \ar[r]^{\delta} & G(\bar{A})}
$$
where $\gamma$ and $\delta$ are induced by the canonical map $A^{\circ} \to \bar{A}.$ It follows that for $M := \ker \gamma$, we have $\pi_{A^{\circ}} (M) \subset N := \ker \delta.$ On the other hand, there is an embedding $G \hookrightarrow GL_d$ as group schemes over $\Z$ (cf. \cite{Bo1}, 3.4), and then $N = G(A^{\circ}) \cap GL_d (A^{\circ}, J)$, where $GL_d (A^{\circ}, J)$ is the congruence subgroup modulo $J$. It is well-known, and easily seen by direct computation, that for $s,t \geq 1$ we have
$$
[GL_d (A^{\circ}, J^s), GL_d (A^{\circ}, J^t)] \subset GL_d (A^{\circ}, J^{s+t}).
$$
Since $A^{\circ}$ is artinian by Lemma \ref{L:AR-2}, $J$ is nilpotent (see \cite{At}, Proposition 8.4), implying that $GL_d (A^{\circ}, J)$ is a nilpotent group. But $\ker \pi_{A^{\circ}}$ is central in $\tilde{G}(A^{\circ})$, so we conclude that $M$ is nilpotent.

On the other hand, $M$ coincides with the normal subgroup of $\tilde{G}(A^{\circ})$ generated by the $\tilde{x}_{\alpha}(a)$, with $a \in J$ and $\alpha \in \Phi.$ Since $J$ is connected by Lemma \ref{L:AR-2}, we see that $\tilde{\sigma}(M)$ is Zariski-closed, connected, and nilpotent. These facts imply that $\tilde{\sigma}(M)$ is contained in the radical of $H^{\circ}$; but since $H^{\circ} = [H^{\circ}, H^{\circ}],$ the radical coincides with the unipotent radical, and therefore $\tilde{\sigma} (M) \subset U.$ Now if we denote by $\mathcal{C}^i T$ the $i$-th term of the lower central series of a group $T$, then it follows from the Steinberg commutator relations that there is a root $\alpha \in \Phi$ such that $\tilde{x}_{\alpha} (J^i) \subset \mathcal{C}^{i-1} M$ for each $i \geq 1.$
Indeed, if all roots of $\Phi$ are of the same length, then the Steinberg commutator relations have the form
$$
[\tilde{x}_{\alpha} (s), \tilde{x}_{\beta} (t)] = \tilde{x}_{\alpha + \beta} (\varepsilon st),
$$
with $\varepsilon = \pm 1$. Since the Weyl group acts transitively on roots of the same length, it is clear that we have the required inclusion for any root $\alpha.$
Furthermore, if $\Phi$ is of type $G_2$, then the long roots form a subsystem of type $A_2,$ so the inclusion holds in this case as well.
Finally, suppose that $\Phi$ is of type $B_2.$ Keeping the same notations as in \S \ref{S:ARR}, we have
\begin{equation}\label{E:FI-1}
[\tilde{x}_{\varepsilon_1} (s), \tilde{x}_{\varepsilon_2}(t)] = \tilde{x}_{\varepsilon_1 + \varepsilon_2} (2st).
\end{equation}
Since $2 \in (A^{\circ})^{\times}$, we obtain that $\tilde{x}_{\varepsilon_1 + \varepsilon_2} (J^2) \subset [M, M]= \mathcal{C}^1 M.$ Similarly, we have
$$
[\tilde{x}_{\varepsilon_1 + \varepsilon_2} (t), \tilde{x}_{-\varepsilon_2}(s)] = \tilde{x}_{\varepsilon_1} (ts) \tilde{x}_{\varepsilon_1 - \varepsilon_2} (-ts^2),
$$
so that
\begin{equation}\label{E:FI-2}
[\tilde{x}_{\varepsilon_1 + \varepsilon_2} (t), \tilde{x}_{-\varepsilon_2}(s)] ([\tilde{x}_{\varepsilon_1 + \varepsilon_2} (t), \tilde{x}_{-\varepsilon_2}(-s)])^{-1} = \tilde{x}_{\varepsilon_1} (2 ts).
\end{equation}
Hence $\tilde{x}_{\varepsilon_1} (J^2) \subset [M,M] = \mathcal{C}^1 M.$ Now suppose by induction that $\tilde{x}_{\varepsilon_1 + \varepsilon_2} (J^i), \tilde{x}_{\varepsilon_1} (J^i) \subset \mathcal{C}^{i-1} M.$ Let $s \in J^i, t \in J.$ Then (\ref{E:FI-1}) shows that $\tilde{x}_{\varepsilon_1 + \varepsilon_2} (2st) \in [\mathcal{C}^{i-1} M, M] = \mathcal{C}^i M,$ hence $\tilde{x}_{\varepsilon_1 + \varepsilon_2} (J^{i+1}) \subset \mathcal{C}^i M.$ Similarly, (\ref{E:FI-2}) gives us that $\tilde{x}_{\varepsilon_1} (J^{i+1}) \subset \mathcal{C}^i M$, as required.

So, if $\mathcal{C}^{\ell} U = \{1 \},$ we have $\psi_{\alpha} (J^{\ell + 1}) = \{ 1 \}$
(cf. Proposition \ref{P:StG-1}).
But the maps $\psi_{\alpha}$ are injective (cf. Theorem \ref{T:ARR-1}), so we obtain that $J^{\ell + 1} = \{ 0 \}$, as claimed.
\end{proof}

\noindent {\it Proof of Proposition \ref{P:FI-5}.} \ The fact that $B:= A^{\circ}$ is a finite-dimensional $K$-algebra if char $K = 0$ has already been established in Proposition \ref{P:AR-2}. Now suppose that $H^{\circ}$ is reductive. Then, according to the lemma, we have $J = \{ 0 \}$, and $B$ is a finite-dimensional $K$-algebra by Proposition \ref{P:AR-20}. The remaining assertions follow.

\vskip5mm

\section{Rationality}\label{S:R}

In this section, we will complete the proof of the Main Theorem (cf. Theorem \ref{T:R-1}) by first showing that the abstract group homomorphisms $\sigma \colon G(B) \to H^{\circ}$ and $\bar{\sigma} \colon G(B) \to \bar{H}$ constructed in Propositions \ref{P:FI-3} and \ref{P:FI-5}, respectively, are in fact morphisms of algebraic groups, and then by lifting $\bar{\sigma}$ in the latter case to a morphism of algebraic groups $\sigma \colon G(B) \to H^{\circ}$ that makes the diagram (\ref{D:FI-1}) commutative.

Note that in the statements concerning the rationality of $\sigma$ and $\bar{\sigma}$, we are implicitly using the fact that the functor of restriction of scalars $R_{B/K}$ enables us to endow $G(B)$ with a natural structure of a connected algebraic group over $K$. We refer to (\cite{DG}, Chapter I, \S 1, 6.6) and (\cite{Oes}, Appendices 2 and 3) for a general discussion of restriction of scalars. In the present context, all we need is that since $B$ is a finite-dimensional $K$-algebra, $R_{B/K}(G)$ (or more precisely $R_{B/K}(_{B}G)$, where $_{B}G$ is obtained from $G$ by the base change $\Z \hookrightarrow B$) is an algebraic $K$-group such that $R_{B/K} (G)(K)$ can be naturally identified with $G(B)$, yielding a structure of an algebraic $K$-group on the latter. Also note that for each root $\alpha \in \Phi$, we have a morphism $R_{B/K} (e_{\alpha}) \colon R_{B/K} (\mathbb{G}_a) \to R_{B/K} (G)$. Now, $R_{B/K} (\mathbb{G}_a) (K) \simeq B$ is an irreducible $K$-variety. On the other hand, since $B$ is a finite-dimensional $K$-algebra, it follows from Lemma \ref{L:FI-100} that $G(B) = G(B)^+,$ i.e. the images $R_{B/K} (e_{\alpha}) (R_{B/K} (\mathbb{G}_a)(K))$ generate $R_{B/K} (G)(K)$. Using (\cite{Bo}, Proposition 2.2), we conclude that $G(B)$ is a connected algebraic group over $K$.

We now need to introduce some additional notations that will be used later.
As above, let $\Phi$ be a reduced irreducible root system of rank $\geq 2$, and let $G$ be the universal Chevalley-Demazure group scheme of type $\Phi.$ For the remainder of this section, we fix an ordering on $\Phi,$ and
let $\Phi^+$ and $\Phi^-$ denote the corresponding subsystems of positive and negative roots, respectively; also, let $\Pi \subset \Phi^+$ be a system of simple roots. Set $m = \vert \Phi^+ \vert = \vert \Phi^- \vert,$ and $\ell = \vert \Pi \vert.$ Next, let $U^+$ and $U^-$ be the standard subschemes associated to $\Phi^+$ and $\Phi^-$, respectively, and let $T$ be the usual maximal torus (cf. \cite{Chev}, \S 4).

Now suppose $R$ is a commutative ring
such that $(\Phi, R)$ is a nice pair and that $K$ is an algebraically closed field. Throughout this section, we will assume that $R$ is noetherian if char $K > 0.$ Given a representation $\rho \colon G(R)^+ \to GL_n (K)$, let $A$ be the algebraic ring associated to $\rho$ (Theorem \ref{T:ARR-1}). In this section,
we will assume that $B := A^{\circ}$ is a finite-dimensional $K$-algebra, so that $G(A^{\circ})= G(B)$ has a natural structure of a connected algebraic $K$-group, as explained above. We note that $B$ is indeed a finite-dimensional $K$-algebra if either char $K = 0$ or $H^{\circ}$ is reductive (see Proposition \ref{P:FI-5}), which are precisely the cases needed to complete the proof of the Main Theorem.
As a matter of convention,
whenever we need to emphasize that we are working with the maps involving $A^{\circ}$ constructed in \S \ref{S:FI}, we will use the notation $G(A^{\circ})$ rather than $G(B).$

We begin with the following lemma, which describes the ``big cell" of $G(B).$

\begin{lemma}\label{L:R-1}
The product map $p: U^- \times T \times U \to G$ gives an isomorphism onto an open subscheme $\Omega \subset G$ over $\Z$. Consequently, if $B$ is a finite-dimensional $K$-algebra, then $\Omega (B)$ is a Zariski-open subvariety of $G(B).$
\end{lemma}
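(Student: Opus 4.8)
The plan is to obtain the scheme-theoretic statement from the standard structure theory of Chevalley--Demazure group schemes and then to transport it to $G(B)$ via restriction of scalars.

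The first assertion is the classical ``big cell'' decomposition, and I would essentially invoke it: in the notation of \cite{Chev}, \S 4, the $U^{-}$ and $U$ are closed subgroup schemes of $G$ over $\Z$, each isomorphic as a $\Z$-scheme to affine space $\mathbb{A}^{m}_{\Z}$ via the ordered product of the root subgroups $U_{\alpha}$ ($\alpha$ ranging over $\Phi^{-}$, resp.\ $\Phi^{+}$), while $T \simeq \mathbb{G}_{m}^{\ell}$, and the multiplication morphism $p \colon U^{-} \times T \times U \to G$ is an isomorphism onto an open subscheme $\Omega \subset G$; this is precisely the content of \cite{Chev}, \S 4 (cf.\ also SGA~3, Exp.\ XXII). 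If a self-contained argument is preferred, the two ingredients are: (a) $p$ is \'etale --- on the identity section its differential is the root-space decomposition isomorphism $\mathrm{Lie}(U^{-}) \oplus \mathrm{Lie}(T) \oplus \mathrm{Lie}(U) \to \mathfrak{g}$, and both source and target are smooth over $\Z$ of relative dimension $2m + \ell$, so $p$ is \'etale along the identity section; since the \'etale locus of $p$ is open and stable under the compatible left translations by $U^{-}$, the right translations by $U$, and the twisted torus action $(u_{-}, t, u_{+}) \mapsto (s u_{-} s^{-1}, s t, u_{+})$ (matched with left translation by $s$ on $G$), and the orbit of the identity section under these operations is all of $U^{-} \times T \times U$, the \'etale locus is everything; and (b) $p$ is universally injective, because over every field the factorization of an element of the big cell as a product $u_{-} t u_{+}$ is unique, which is classical. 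An \'etale universally injective morphism is an open immersion (EGA~IV, 17.9.1), so $p$ is an isomorphism onto $\Omega := p(U^{-} \times T \times U)$, an open subscheme of $G$.

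For the ``consequently'' clause I would argue as in the discussion preceding the lemma. Since $B$ is a finite-dimensional $K$-algebra and $G_{B}$ is affine over $B$ (hence its open subscheme $\Omega_{B}$ is quasi-projective over $B$), the Weil restriction $R_{B/K}$ is defined on both, and it takes open immersions to open immersions (immediate from the affine-chart description of $R_{B/K}$; cf.\ \cite{Oes}, Appendices~2 and~3). Applying $R_{B/K}$ to the open immersion $\Omega_{B} \hookrightarrow G_{B}$ obtained by base change of $\Omega \hookrightarrow G$ along $\Z \to B$ yields an open immersion $R_{B/K}(\Omega_{B}) \hookrightarrow R_{B/K}(G_{B})$ of algebraic $K$-varieties. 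Passing to $K$-points and using the canonical identifications $R_{B/K}(\Omega_{B})(K) = \Omega(B)$ and $R_{B/K}(G_{B})(K) = G(B)$ recalled earlier in this section, we conclude that $\Omega(B)$ is the set of $K$-points of an open subscheme of $R_{B/K}(G)$, hence a Zariski-open subvariety of $G(B)$.

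I do not expect a genuine obstacle here: the first part is entirely standard, and the second is a formal property of restriction of scalars. The only things requiring a little care are bookkeeping --- using one and the same ordering of $\Phi^{+}$ (and the induced ordering of $\Phi^{-}$) in the definitions of $U$, $U^{-}$ and in the big-cell factorization, and remembering that the ``$U$'' of this section denotes the positive unipotent subscheme $U^{+}$, not the unipotent radical of $H^{\circ}$ from \S\ref{S:FI} --- together with the fact, standard but not wholly trivial, that Weil restriction preserves open immersions, for which I would rely on the explicit affine description of $R_{B/K}$ in \cite{Oes}.
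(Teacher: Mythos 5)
Your argument is correct, but it diverges from the paper's proof in a way worth noting. For the first assertion the paper simply invokes (\cite{Bo1}, Lemma 4.5), which gives the stronger statement that $\Omega$ is a \emph{principal} open subscheme $D(d)$ for an explicit $d \in \Z[G]$ (a matrix coefficient of $\bigwedge^m \mathrm{Ad}$); your alternative route via ``\'etale $+$ universally injective $\Rightarrow$ open immersion'' is a legitimate self-contained substitute, though it only yields that $\Omega$ is open, not principal. This matters for the second assertion, where the two proofs genuinely part ways: the paper uses principality to write $\Omega(B) = \{ g \in G(B) \mid d(g) \in B^{\times} \}$ and then concludes immediately from Proposition \ref{P:AR-1}, which says $B^{\times}$ is open in $B$ --- a one-line argument entirely inside the paper's own toolkit on algebraic rings. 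You instead appeal to the general fact that $R_{B/K}$ carries open immersions to open immersions. That fact is true, but your justification (``immediate from the affine-chart description'') is only immediate for \emph{principal} opens, where $R_{B/K}(D(d))$ is cut out by the nonvanishing of the norm $N_{B/K}(d)$; for a general open subscheme one needs the observation that $\mathrm{Spec}\, B \to \mathrm{Spec}\, K$ is universally closed, so that the locus where a $T \otimes_K B$-point lands outside $U$ projects to a closed subset of $T$ (this is the content of the standard reference, e.g. Bosch--L\"utkebohmert--Raynaud, \emph{N\'eron Models}, 7.6, Proposition 2, which would be the right citation here rather than \cite{Oes} alone). Your approach buys generality and functoriality; the paper's buys brevity and keeps the openness of $\Omega(B)$ visibly tied to the openness of $B^{\times}$, which is the form in which the big cell is actually used later (Lemma \ref{L:R-2}).
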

\begin{proof}
By (\cite{Bo1}, Lemma 4.5), $p$ is an isomorphism onto a principal open subscheme $\Omega \subset G$ defined by some $d \in \Z[G]$ (which is in fact
a matrix coefficient of the $m$-th exterior power of the adjoint representation). It follows that
$$
\Omega (B) = \{ g \in G(B) \mid d(g) \in B^{\times} \}.
$$
Since $d \colon G(B) \to B$ (or, more precisely, $R_{B/K} (d)$) is a regular map of algebraic $K$-varieties, and $B^{\times} \subset B$ is Zariski-open by Proposition \ref{P:AR-1}, we obtain that $\Omega (B)$ is an open subvariety of $G(B)$, as claimed.
\end{proof}

As a first step in proving that the homomorphisms $\sigma$ and $\bar{\sigma}$ are regular, we will now show that their restrictions to $\Omega(A^{\circ})$ are regular.

\begin{lemma}\label{L:R-2}
Let $\sigma \colon G(A^{\circ}) \to H^{\circ}$ and $\bar{\sigma} \colon G(A^{\circ}) \to \bar{H}$ be the group homomorphisms constructed in Propositions {\ref{P:FI-3}} and {\ref{P:FI-5}}, respectively. Then their restrictions $\sigma \vert_{\Omega (A^{\circ})}$ and $\bar{\sigma} \vert_{\Omega (A^{\circ})}$ to $\Omega(A^{\circ})$ are regular.
\end{lemma}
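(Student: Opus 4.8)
The plan is to reduce the regularity of $\sigma$ and of $\bar\sigma$ on $\Omega(A^{\circ})$ to the already-established regularity of the building blocks $\psi_{\alpha}$ by writing these homomorphisms in coordinates on the big cell. First I would recall the scheme-level structure over $\Z$: a fixed ordering of $\Phi^+$ gives an isomorphism of schemes $\mathbb{A}^m\xrightarrow{\sim}U^+$, $(a_{\alpha})_{\alpha\in\Phi^+}\mapsto\prod_{\alpha}e_{\alpha}(a_{\alpha})$, and likewise for $U^-$, while $\prod_{i=1}^{\ell}h_{\alpha_i}\colon\mathbb{G}_m^{\ell}\xrightarrow{\sim}T$ is an isomorphism of schemes (here one uses that $G$ is the \emph{universal} Chevalley group, so the cocharacter group of $T$ has the simple coroots $h_{\alpha_i}$ as a basis; cf.\ \cite{Chev}, \S4). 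Applying $R_{B/K}$ with $B=A^{\circ}$ and invoking Lemma~\ref{L:R-1} together with the fact that $B^{\times}\subset B$ is Zariski-open (Proposition~\ref{P:AR-1}(i)), one obtains an isomorphism of $K$-varieties $B^{m}\times(B^{\times})^{\ell}\times B^{m}\xrightarrow{\sim}\Omega(A^{\circ})$; concretely, every $g\in\Omega(A^{\circ})$ is uniquely $g=u^-tu^+$ with $u^-=\prod_{\beta\in\Phi^-}e_{\beta}(a^-_{\beta})$, $t=\prod_i h_{\alpha_i}(u_i)$, $u^+=\prod_{\alpha\in\Phi^+}e_{\alpha}(a^+_{\alpha})$, and the coordinates $a^{\pm}_{\bullet}\in B$, $u_i\in B^{\times}$ depend regularly on $g$ (the inverse of the product map being a morphism by Lemma~\ref{L:R-1}).

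Next I would pin down what $\sigma$ (resp.\ $\bar\sigma$) does to the generating elements. The commutativity of~(\ref{D:FI-1}) gives $\sigma\circ\pi_{A^{\circ}}=\tilde\sigma$; since $\pi_{A^{\circ}}(\tilde x_{\alpha}(a))=e_{\alpha}(a)$ and, by Proposition~\ref{P:StG-1}, $\tilde\sigma(\tilde x_{\alpha}(a))=\tilde\tau(\tilde x_{\alpha}(a))=\psi_{\alpha}(a)$ for $a\in A^{\circ}$, this yields $\sigma(e_{\alpha}(a))=\psi_{\alpha}(a)$ for all $\alpha\in\Phi$. Feeding this through the definitions $h_{\alpha}(u)=\pi_{A^{\circ}}(\tilde w_{\alpha}(u)\tilde w_{\alpha}(-1))$ and $\tilde w_{\alpha}(u)=\tilde x_{\alpha}(u)\tilde x_{-\alpha}(-u^{-1})\tilde x_{\alpha}(u)$ gives $\sigma(h_{\alpha}(u))=H_{\alpha}(u)$, with $H_{\alpha}$ and $W_{\alpha}$ the maps introduced in the proof of Proposition~\ref{P:FI-3}. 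Now $\psi_{\alpha}\colon A\to H$ is regular by Theorem~\ref{T:ARR-1}, and $u\mapsto u^{-1}$ is regular on $(A^{\circ})^{\times}$ by Proposition~\ref{P:AR-1}(ii), so $W_{\alpha}$, hence $H_{\alpha}$, is a regular map $(A^{\circ})^{\times}\to H^{\circ}$. The analogous identities $\bar\sigma(e_{\alpha}(a))=\nu(\psi_{\alpha}(a))$ and $\bar\sigma(h_{\alpha}(u))=\nu(H_{\alpha}(u))$ follow from $\bar\sigma\circ\pi_{A^{\circ}}=\nu\circ\tilde\sigma$, and their right-hand sides are regular because $\nu\colon H^{\circ}\to\bar H$ is a morphism of algebraic groups.

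Finally I would assemble the pieces: for $g=u^-tu^+\in\Omega(A^{\circ})$ in the coordinates above,
$$
\sigma(g)=\Bigl(\prod_{\beta\in\Phi^-}\psi_{\beta}(a^-_{\beta})\Bigr)\Bigl(\prod_{i=1}^{\ell}H_{\alpha_i}(u_i)\Bigr)\Bigl(\prod_{\alpha\in\Phi^+}\psi_{\alpha}(a^+_{\alpha})\Bigr),
$$
which presents $\sigma|_{\Omega(A^{\circ})}$ as a composition of the regular coordinate isomorphism of the first step, the regular maps $\psi_{\bullet}$ and $H_{\alpha_i}$, and the multiplication morphism of $H^{\circ}$; hence it is regular. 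The same formula with $\nu$ applied to each factor handles $\bar\sigma|_{\Omega(A^{\circ})}$. I expect the only non-bookkeeping point to be the torus factor: because $h_{\alpha}(u)$ involves $u^{-1}$, the argument genuinely needs the regularity of inversion on $(A^{\circ})^{\times}$ (Proposition~\ref{P:AR-1}), and it also relies on the scheme-theoretic parametrization of $T$ by the simple coroots, which is where the universality of $G$ enters; granting these, the remainder is routine manipulation of regular maps.
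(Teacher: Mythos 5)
Your proposal is correct and follows essentially the same route as the paper: parametrize the big cell by $(A^{\circ})^m\times((A^{\circ})^{\times})^{\ell}\times(A^{\circ})^m$ via the product map $U^-\times T\times U^+\to\Omega$, observe that $\sigma$ sends $e_{\alpha}(a)$ to $\psi_{\alpha}(a)$ and $h_{\alpha}(u)$ to $H_{\alpha}(u)$, and conclude that $\sigma|_{\Omega(A^{\circ})}$ factors as the regular map built from the $\psi_{\alpha}$ and $H_{\alpha}$ composed with the inverse of that parametrization (and likewise for $\bar{\sigma}$ after applying $\nu$). The points you flag as the only non-routine ones --- regularity of inversion on $(A^{\circ})^{\times}$ from Proposition \ref{P:AR-1} and the coroot parametrization of $T$ --- are exactly the ingredients the paper uses as well.
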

\begin{proof}
Recall that there exist isomorphisms of group schemes over $\Z$
$$
\omega^+ \colon (\mathbb{G}_a)^m \to U, \ \ \ \omega^- \colon (\mathbb{G}_a)^m \to U^-, \ \ \ {\rm and} \ \ \ \omega \colon (\mathbb{G}_m)^{\ell} \to T
$$
such that for any commutative ring $R$, we have
$$
\omega^+ ((u_{\alpha})_{\alpha \in \Phi^+}) = \prod_{\alpha \in \Phi^+} e_{\alpha} (u_{\alpha}), \ \ \ \omega^- ((u_{\alpha})_{\alpha \in \Phi^-}) = \prod_{\alpha \in \Phi^-} e_{\alpha} (u_{\alpha}), \ \ \ {\rm and} \ \ \ \omega ((t_{\alpha})_{\alpha \in \Pi}) = \prod_{\alpha \in \Pi} h_{\alpha} (t_{\alpha}),
$$
where the roots in $\Phi^+$ and $\Phi^-$ are listed in an arbitrary (but {\it fixed}) order (cf. \cite{Chev}, \S 4). Let
$$
\theta \colon (\G_a)^m \times (\G_m)^{\ell} \times (\G_a)^m \to G
$$
be the composition of the isomorphism
$$
\omega^- \times \omega \times \omega^+ \colon (\G_a)^m \times (\G_m)^{\ell} \times (\G_a)^m \to U^- \times T \times U
$$
with the product map $p \colon U^- \times T \times U \to G.$ Then it follows from Lemma \ref{L:R-1} that $\theta$ is an isomorphism onto the Zariski-open subscheme $\Omega \subset G$, and consequently $\theta$ induces an isomorphism of $K$-varieties
$$
\theta_{A^{\circ}} \colon (A^{\circ})^m \times ((A^{\circ})^{\times})^{\ell} \times (A^{\circ})^m \to \Omega (A^{\circ}).
$$
Next, let $\psi_{\alpha} \colon A^{\circ} \to H$, $\alpha \in \Phi$, be the regular maps constructed in Theorem \ref{T:ARR-1}, and $H_{\alpha} \colon (A^{\circ})^{\times} \to H$, $\alpha \in \Phi$, be the regular maps introduced in the proof of Proposition \ref{P:FI-3}.
Define the regular map
$$
\kappa \colon (A^{\circ})^m \times ((A^{\circ})^{\times})^{\ell} \times (A^{\circ})^m \to H
$$
by
$$
\kappa ((u_{\alpha})_{\alpha \in \Phi^-}, (t_{\alpha})_{\alpha \in \Pi}, (u_{\alpha})_{\alpha \in \Phi^+}) = \left( \prod_{\alpha \in \Phi^-} \psi_{\alpha} (u_{\alpha}) \right) \cdot \left( \prod_{\alpha \in \Pi} H_{\alpha} (t_{\alpha}) \right) \cdot \left( \prod_{\alpha \in \Phi^+} \psi_{\alpha} (u_{\alpha}) \right).
$$
By our construction, $\sigma (e_{\alpha} (a)) = \psi_{\alpha} (a)$ for any $a \in A^{\circ}$ and all $\alpha \in \Phi,$ and consequently $(\sigma (h_{\alpha} (a)) = H_{\alpha} (a)$ for all $a \in (A^{\circ})^{\times}$ and $\alpha \in \Phi.$ Thus we have the following commutative diagram
$$
\xymatrix{ & (A^{\circ})^m \times ((A^{\circ})^{\times})^{\ell} \times (A^{\circ})^m \ar[ld]_{\theta} \ar[rd]^{\kappa} \\ G(A^{\circ}) \supset \Omega(A^{\circ})  \ar[rr]^{\sigma} & & H}
$$
Since $\kappa$ is regular and $\theta$ is an isomorphism of algebraic varieties over $K$, we conclude that $\sigma \vert_{\Omega (A^{\circ})}$ is regular. A similar argument shows that $\bar{\sigma} \vert_{\Omega (A^{\circ})}$ is regular.

\end{proof}

We can now prove
\begin{prop}\label{P:R-1}
The homomorphisms $\sigma \colon G(B) \to H^{\circ}$ and $\bar{\sigma} \colon G(B) \to \bar{H}$ constructed in Propositions \ref{P:FI-3} and \ref{P:FI-5}, respectively, are morphisms of algebraic groups.
\end{prop}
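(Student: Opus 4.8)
The plan is to upgrade the local regularity statement of Lemma~\ref{L:R-2} to global regularity by covering $G(B)$ with translates of the big cell $\Omega(B)$ and invoking the fact that regularity of a map of varieties is a local condition on the source. This is the standard device for showing that an abstract homomorphism of algebraic groups which is regular on a neighborhood of the identity is a morphism.

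First I would observe that the identity element of $G(B)$ lies in $\Omega(B)$: in the notation of the proof of Lemma~\ref{L:R-2}, the isomorphism $\theta_{A^{\circ}}$ carries the point $(0,1,0)$ of $(A^{\circ})^m \times ((A^{\circ})^{\times})^{\ell} \times (A^{\circ})^m$ to $\omega^-(0)\,\omega(1)\,\omega^+(0) = 1$, so $1 \in \Omega(A^{\circ})$. Since $G(B) = R_{B/K}(G)(K)$ is an algebraic $K$-group, left translation by any fixed $g \in G(B)$ is an automorphism of the underlying variety, and $\Omega(B)$ is Zariski-open in $G(B)$ by Lemma~\ref{L:R-1}. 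Hence each translate $g\,\Omega(B)$ is a Zariski-open neighbourhood of $g$, and the family $\{\, g\,\Omega(B) \,\}_{g \in G(B)}$ is an open cover of $G(B)$.

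Next I would check that $\sigma$ restricts to a regular map on each $g\,\Omega(B)$. For $x \in g\,\Omega(B)$ one has $g^{-1}x \in \Omega(B)$, and since $\sigma$ is a group homomorphism,
\[
\sigma(x) = \sigma(g) \cdot \sigma(g^{-1}x).
\]
Here $x \mapsto g^{-1}x$ is an isomorphism of varieties $g\,\Omega(B) \to \Omega(B)$; the map $\sigma\vert_{\Omega(B)}$ is regular by Lemma~\ref{L:R-2}; and left translation by the fixed element $\sigma(g) \in H^{\circ}$ is a regular self-map of $H^{\circ}$. Composing these three regular maps yields exactly $\sigma\vert_{g\,\Omega(B)}$, which is therefore regular. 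Since the sets $g\,\Omega(B)$ cover $G(B)$ and a map that is regular on each member of an open cover is regular, we conclude that $\sigma \colon G(B) \to H^{\circ}$ is regular; being also a group homomorphism, it is a morphism of algebraic groups. The identical argument, replacing $H^{\circ}$ by $\bar H$ and $\sigma$ by $\bar\sigma$ and using that left translations on $\bar H$ are regular, shows that $\bar\sigma \colon G(B) \to \bar H$ is a morphism of algebraic groups.

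I do not anticipate a genuine obstacle here: all the substance is contained in Lemma~\ref{L:R-2} and, behind it, in the explicit construction of the maps $\psi_{\alpha}$ and $H_{\alpha}$ together with the product decomposition of the big cell, while the present statement is merely the formal ``spreading out'' of that local regularity. The only points requiring a word of care are that $1 \in \Omega(B)$, so that the translates of the big cell actually cover the whole group, and that left translations on $G(B)$, $H^{\circ}$ and $\bar H$ are morphisms of varieties — both of which are immediate.
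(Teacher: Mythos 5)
Your argument is correct and is essentially the paper's: Lemma~\ref{L:R-2} gives regularity on the big cell, and the homomorphism property spreads this out over translates covering the group. The paper merely packages the second step as a standalone lemma (Lemma~\ref{L:R-3}) valid for an arbitrary open subset $V$ of a connected group (using $VV = \mathscr{G}$ rather than the fact that $1 \in \Omega(B)$), but the mechanism --- writing $\sigma(x)=\sigma(g)\,\sigma(g^{-1}x)$ on each translate and invoking locality of regularity --- is the same.
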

Indeed, as we remarked above, under the hypotheses of Propositions \ref{P:FI-3} and \ref{P:FI-5}, $G(B)$ is a connected algebraic group over $K$. Thus, the proposition follows immediately from
Lemma \ref{L:R-2} and the following (elementary)
\begin{lemma}\label{L:R-3}
Let $K$ be an algebraically closed field and let $\mathscr{G}$ and $\mathscr{G}'$ be affine algebraic groups over $K$, with $\mathscr{G}$ connected. Suppose $f \colon \mathscr{G} \to \mathscr{G}'$ is an abstract group homomorphism\footnotemark and assume there exists a Zariski-open set $V \subset \mathscr{G}$
such that $\varphi := f \vert_V$ is a regular map. Then $f$ is a morphism of algebraic groups. \footnotetext{Here we tacitly identify $\mathscr{G}$ and $\mathscr{G}'$ with the corresponding groups $\mathscr{G}(K)$ and $\mathscr{G}'(K)$ of $K$-points.}
\end{lemma}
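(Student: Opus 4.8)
The plan is to use the irreducibility of $\mathscr{G}$ (which is exactly what connectedness gives us for an algebraic group) to show that $f$ is regular in a Zariski-neighbourhood of every point. The idea is that on a suitable translate $aV$ of $V$ the homomorphism $f$ is visibly a composition of the regular map $\varphi = f|_V$ with left translations, which are automorphisms of the underlying varieties.

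First I would record the two elementary facts that make the argument work. Since $\mathscr{G}$ is connected, it is irreducible, so any two nonempty Zariski-open subsets meet; in particular $V$ is dense. Moreover, for a fixed $a \in \mathscr{G}$, left translation $x \mapsto ax$ on $\mathscr{G}$ and inversion $x \mapsto x^{-1}$ on $\mathscr{G}$ are isomorphisms of varieties, so $aV$ and $V^{-1}$ are again nonempty open; likewise left translation by a fixed element of $\mathscr{G}'$ is an automorphism of the variety $\mathscr{G}'$. Next, fix an arbitrary $g \in \mathscr{G}$. Because $V$ and $gV^{-1}$ are nonempty open subsets of the irreducible variety $\mathscr{G}$, I can choose a point $a \in V \cap gV^{-1}$. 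Then $b := a^{-1}g$ lies in $V$ (since $a \in gV^{-1}$ means precisely $a^{-1}g \in V$), and $g = ab \in aV$. For every $x \in aV$ we have $a^{-1}x \in V$, and since $f$ is a group homomorphism,
$$
f(x) \;=\; f(a)\, f(a^{-1}x) \;=\; \bigl(\lambda_{f(a)} \circ \varphi \circ \lambda_{a^{-1}}\bigr)(x),
$$
where $\lambda_{a^{-1}} \colon \mathscr{G} \to \mathscr{G}$ and $\lambda_{f(a)} \colon \mathscr{G}' \to \mathscr{G}'$ denote left translations. The right-hand side is regular as a composition of regular maps, so $f|_{aV}$ is regular; since $g \in aV$, the map $f$ is regular at $g$.

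As $g$ was arbitrary, $f$ is regular on all of $\mathscr{G}$, i.e.\ a morphism of varieties; being in addition a homomorphism of abstract groups (identifying $\mathscr{G}$, $\mathscr{G}'$ with their groups of $K$-points as in the footnote), it is a morphism of algebraic groups, as claimed. I do not anticipate a genuine obstacle here: the only point that requires care is the selection of the auxiliary point $a$, and that is precisely where irreducibility — hence connectedness — of $\mathscr{G}$ enters; without it the translates $aV$ need not cover $\mathscr{G}$ and the local-to-global step fails.
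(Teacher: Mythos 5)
Your proof is correct and follows essentially the same idea as the paper's: cover $\mathscr{G}$ by translates $aV$ of $V$ (using connectedness, hence irreducibility, to guarantee such a translate through every point) and use the homomorphism law to write $f$ on each translate as a composition of $\varphi$ with left translations. Your version is in fact slightly more streamlined than the paper's, which first extends $\varphi$ as a rational map to $VV=\mathscr{G}$ and then must verify separately that the extension is multiplicative and coincides with $f$ --- steps your argument avoids by working with $f$ itself throughout.
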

\begin{proof}
Clearly, since $f$ is a group homomorphism, given $x,y \in V$ such that $xy \in V$, we have $\varphi (xy) = \varphi(x) \varphi(y).$
Now fix any $a \in V$, and consider the regular map $\varphi_a \colon a V \to \mathscr{G}'$ given by $\varphi_a (a u) = \varphi(a) \varphi(u).$ If $a u = v \in a V \cap V$, then $\varphi_a (au) = \varphi (a) \varphi (u) = \varphi (au) = \varphi (v).$ Hence, $\varphi$ and $\varphi_a$ define the same rational map. So, since $\varphi_a$ is defined on $aV$, it follows that $\varphi$ is defined on $\cup_{ a \in V} a V = VV = \mathscr{G}$,  where the last equality follows from (\cite{Bo}, Proposition 1.3) as $\mathscr{G}$ is connected. Thus, $\varphi$ extends to a regular map on $\mathscr{G}$, which we will also denote by $\varphi$. Observe now that the map $\mathscr{G} \times \mathscr{G} \to \mathscr{G}'$, $(x,y) \mapsto \varphi(xy)^{-1} \varphi(x) \varphi(y)$ coincides with the constant map $(x,y) \mapsto e_{\mathscr{G}'}$ on $V \times V$. So, the density of $V$ implies that $\varphi(xy) = \varphi(x) \varphi(y)$ for all $x,y \in \mathscr{G}.$ Thus, $\varphi$ and $f$ are both group homomorphisms that coincide on $V$, hence $\varphi \equiv f$ as $V V = \mathscr{G}.$ So, $f$ is a morphism of algebraic groups, as claimed.
\end{proof}

The next step is to show that under appropriate assumptions, the morphism of algebraic groups $\bar{\sigma} \colon G(B) \to \bar{H}$ can be lifted to a morphism $\sigma \colon G(B) \to H^{\circ}$ making the diagram (\ref{D:FI-1}) commutative. For this, we first establish some structural results for $G(B)$ as an algebraic $K$-group, where $B$ is an arbitrary finite-dimensional $K$-algebra.
Let $J = J(B)$ be the Jacobson radical of $B$. By the Wedderburn-Malcev Theorem (see \cite{P}, Theorem 11.6), there exists a semisimple subalgebra $\bar{B} \subset B$ such that $B = \bar{B} \oplus J.$ Then $\bar{B} \simeq B/J \simeq K \times \cdots \times K$ ($r$ copies). Let $e_i = (0, \dots, 0, 1, 0, \dots, 0) \in \bar{B}$ be the $i$th standard basis vector. Then we have $B = \oplus_{i=1}^r B_i,$ where $B_i = e_i B.$ Clearly, $B_i = \bar{B}_i \oplus J_i$ with $\bar{B}_i = e_i \bar{B} \simeq K$ and $J_i = e_i J$, so in particular, $B_i$ is a local $K$-algebra with maximal ideal $J_i.$ For an ideal $\mathfrak{b} \subset B$, we let $G(B, \mathfrak{b})$ denote the congruence subgroup modulo $\mathfrak{b},$ i.e. the kernel of the natural morphism of algebraic $K$-groups $G(B) \to G(B/ \mathfrak{b})$; it is clear that $G(B, \mathfrak{b})$ is a closed normal subgroup of $G(B).$
\begin{prop}\label{P:R-3}
{\rm (i)} $G(B) = G(B,J) \rtimes G(\bar{B})$ is a Levi decomposition of $G(B)$;

\vskip2mm

\noindent {\rm (ii)} $G(B) \simeq G(B_1) \times \cdots \times G(B_r)$, where each $B_i$ is a finite-dimensional local $K$-algebra;

\vskip2mm

\noindent {\rm (iii)} \parbox[t]{15.6cm}{Suppose $B = K \oplus J$ is a finite-dimensional commutative local $K$-algebra. Let $d \geq 1$ be such that $J^d = \{ 0 \}$, and for each $k = 1, \dots, d-1,$ let $s_k = \dim_K J^k/ J^{k+1}.$ Then, for $k = 1, \dots, d-1,$ the quotient $\mathcal{G}_{k} := G(B, J^{k})/ G(B, J^{k +1})$ is isomorphic as an algebraic $K$-group
to $\mathfrak{g} \times \cdots \times \mathfrak{g}$ ($s_{k}$ copies), where $\mathfrak{g}$ is the Lie algebra of $G$, considered as an algebraic group in terms of the underlying vector space. Furthermore, the conjugation action of $G(K)$ on $\mathcal{G}_{k}$ is the sum of $s_{k}$ copies of the adjoint representation.}

\end{prop}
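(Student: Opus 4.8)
The plan is to dispatch parts (i) and (ii) by pure functoriality of $G$ and of restriction of scalars, and to reduce part (iii) to the deformation theory of the smooth group scheme $G$ along a square-zero extension. Begin with (i): the splitting $B = \bar B \oplus J$ gives a ring section $\bar B \hookrightarrow B$ of the projection $B \twoheadrightarrow B/J \simeq \bar B$, so applying $G$ and passing to restriction of scalars yields a section $G(\bar B) \hookrightarrow G(B)$ of the morphism $G(B) \to G(B/J)$ of algebraic $K$-groups whose kernel is by definition $G(B,J)$; hence $G(B) = G(B,J) \rtimes G(\bar B)$ as algebraic $K$-groups. Since $\bar B \simeq K^{r}$ we get $G(\bar B) \simeq G(K)^{r}$, which is semisimple — $G$ being the universal, hence simply connected, Chevalley-Demazure scheme — and in particular connected and reductive. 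Fixing a closed embedding $G \hookrightarrow GL_N$ over $\Z$ (cf. \cite{Bo1}, 3.4), $G(B,J)$ becomes a closed subgroup of $GL_N(B,J) = 1 + M_N(J)$, which is connected and unipotent because $J$ is nilpotent ($B$ is artinian) and $GL_N(B,J)$ is filtered by the normal subgroups $1 + M_N(J^{i})$ with abelian quotients; thus $G(B,J)$ is unipotent, and it is connected because, by (iii) below, it is filtered by normal subgroups with vector-group quotients. As the quotient $G(B)/G(B,J) \simeq G(\bar B)$ is reductive, $G(B,J)$ is exactly the unipotent radical and the decomposition is a Levi decomposition. For (ii): the orthogonal idempotents $e_1,\dots,e_r$ give $B \simeq B_1 \times \cdots \times B_r$ as $K$-algebras, each $B_i = e_iB$ being local since $B_i/J_i \simeq K$; applying $G$ gives a group isomorphism $G(B) \simeq G(B_1) \times \cdots \times G(B_r)$, and since restriction of scalars commutes with finite products this is an isomorphism of algebraic $K$-groups.

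For (iii), note first that $\mathcal{G}_k = G(B,J^k)/G(B,J^{k+1})$ depends only on $B/J^{k+1}$ and equals $\ker\!\big(G(B/J^{k+1}) \to G(B/J^k)\big)$, so we must describe this kernel for the square-zero ideal $I_k := J^k/J^{k+1}$ of $B/J^{k+1}$ (indeed $I_k^2 = 0$ since $J^{k}J^{k} \subseteq J^{2k} \subseteq J^{k+1}$). Working inside $GL_N(B)$ via $G \hookrightarrow GL_N$: if $g = 1+x$, $h = 1+y$ with $x,y \in M_N(J^k)$ then $xy \in M_N(J^{2k}) \subseteq M_N(J^{k+1})$, so $g \mapsto (g-1)\bmod M_N(J^{k+1})$ is a homomorphism $G(B,J^k) \to M_N(I_k)$ into the additive group, with kernel precisely $G(B,J^{k+1})$. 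The smoothness of $G$ over $\Z$, applied to the square-zero extension $B/J^{k+1} \twoheadrightarrow B/J^k$, identifies the image with $\mathrm{Lie}(G) \otimes_\Z I_k = \mathfrak{g} \otimes_K I_k$ (the last equality because $I_k$ is a $K$-vector space and $\mathrm{Lie}(G)\otimes_\Z K = \mathfrak{g}$); choosing a $K$-basis of $I_k$, of cardinality $s_k$, this is $\mathfrak{g}^{s_k}$. Thus $\mathcal{G}_k \simeq \mathfrak{g}^{s_k}$ as groups, and since both $g \mapsto g-1$ and $v \mapsto 1+v$ are $K$-regular on the relevant subvarieties of $R_{B/K}(GL_N)$, this is an isomorphism of algebraic $K$-groups onto the vector group $\mathfrak{g}^{s_k}$, where $\mathfrak{g}$ is the Lie algebra of $G$ regarded as a vector group.

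It remains to compute the conjugation action. Here $G(K)$ sits inside $G(B)$ via the ring section $K \hookrightarrow B$ coming from $B = K \oplus J$; for $g \in G(K)$ and $1+x \in G(B,J^k)$ we have $g(1+x)g^{-1} = 1 + gxg^{-1}$. Writing $x \equiv \sum_j X_j \pi_j$ with $X_j \in \mathfrak{g} \subseteq \mathfrak{gl}_N(K)$ and $\pi_j \in J^k$ chosen lifts of the basis of $I_k$, and using that the $\pi_j$ are central scalars of $B$, we get $gxg^{-1} \equiv \sum_j (gX_jg^{-1})\pi_j = \sum_j \mathrm{Ad}(g)(X_j)\pi_j$. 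Under the identification $\mathcal{G}_k \simeq \mathfrak{g} \otimes_K I_k$ this conjugation is therefore $\mathrm{Ad}(g) \otimes \mathrm{id}_{I_k}$, i.e.\ the direct sum of $s_k$ copies of the adjoint representation of $G(K)$ on $\mathfrak{g}$, as asserted.

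The main obstacle is the middle step of (iii): identifying the image of the congruence quotient with $\mathfrak{g} \otimes_K (J^k/J^{k+1})$. This rests on the smoothness of the Chevalley-Demazure scheme $G$ over $\Z$ — equivalently, that the equations cutting out $G \subseteq GL_N$ have constant-rank differential along the identity section, so that their vanishing on $1 + M_N(I_k)$ defines a free $K$-module of the expected rank — and one must simultaneously carry the $R_{B/K}$-structure along in order to upgrade the resulting abstract group isomorphism to an isomorphism of algebraic $K$-groups, and to know that the reduction maps $G(B/J^{k+1}) \to G(B/J^k)$ are surjective (so that the $G(B,J^k)$ genuinely filter $G(B,J)$ with successive quotients $\mathcal{G}_k$, as used in (i)). Once this is in place, parts (i) and (ii) are essentially formal.
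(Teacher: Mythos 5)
Your proof is correct and structurally the same as the paper's: fix an embedding $G \hookrightarrow GL_N$ over $\Z$, use the Wedderburn--Malcev splitting $B = \bar{B} \oplus J$ for (i), the idempotent decomposition for (ii), and a first-order expansion modulo the square-zero ideal $J^k/J^{k+1}$ for (iii). The differences lie only in how a few sub-steps are justified. For the surjectivity of $G(B,J^k) \to G(B/J^{k+1}, J^k/J^{k+1})$ (needed to identify $\mathcal{G}_k$ with the full congruence kernel), the paper invokes Matsumoto's theorem that $G(S)=G(S)^+$ for semilocal $S$, whereas you use formal smoothness of $G$ over $\Z$ along the square-zero extension; both are standard and valid. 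For the unipotence of $G(B,J)$ in (i), you observe directly that it is a closed subgroup of the unipotent group $1+M_N(J)$, while the paper shows $G(B,J)$ is nilpotent and connected, hence contained in the radical, and then uses $G(B)=[G(B),G(B)]$ to exclude a torus; your route is a bit more economical. Your identification of the congruence kernel with $\mathfrak{g}\otimes_K (J^k/J^{k+1})$ via the deformation-theoretic description of $\mathrm{Lie}(G)$ is the same computation the paper performs explicitly with the gradient expansion of regular functions at $I_n$. One presentational caveat: your connectedness argument for $G(B,J)$ forward-references (iii) (and, for non-local $B$, also (ii)); the paper gets connectedness more cheaply from the fact that $G(B)$ is connected and $G(B)\simeq G(B,J)\times G(\bar{B})$ as a variety.
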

\begin{proof}
(i) Fix an embedding $G \hookrightarrow GL_n$ as group schemes over $\Z$ (cf. \cite{Bo1}, 3.4). Then $G(B, J) = G(B) \cap GL_n (B, J)$. Note that the embedding $\bar{B} \hookrightarrow B$ induces a section for the canonical morphism $G(B) \to G(B/J)$, whence the semi-direct product decomposition \begin{equation}\label{E:R-1}
G(B) = G(B, J) \rtimes G(\bar{B}).
\end{equation}
Since $\bar{B} = K \times \cdots \times K,$ the group $G(\bar{B}) = G(K) \times \cdots \times G(B)$ is semisimple, in particular reductive (see \cite{Stb1}, Theorem 6). On the other hand, it is well-known and easy to check that for any $a, b \geq 1$, we have
$$
[GL_n (B, J^a), GL_n (B, J^b)] \subset GL_n (B, J^{a+ b}).
$$
Since $J^d = \{ 0 \}$ for some $d \geq 1$ (cf. \cite{At}, Proposition 8.4), we conclude that the group $GL_n (B, J)$ is nilpotent, and therefore $G(B, J)$ is also nilpotent.
Now since $G(B)$ is connected as an algebraic $K$-group, the decomposition (\ref{E:R-1}) implies the connectedness of $G(B,J).$ Thus, $G(B,J)$ is contained in the radical of $G(B).$
But $G(B)/ G(B, J) \simeq G(\bar{B})$ is reductive, so we see that
$G(B, J)$ is precisely the radical of $G(B).$ Finally, since $G(B)$ coincides with its commutator subgroup (see \cite{St1}, Corollary 4.4), the radical of $G(B)$ cannot contain any torus, hence coincides with the unipotent radical.

\vskip2mm

\noindent (ii) is obvious.

\vskip2mm

\noindent (iii) Let $C_{k} = B/ J^{k + 1}.$ It follows from (\cite{M}, Corollary 2)  that if $\bar{J} := J/J^{k+1}$ denotes the image of $J$ in $C_{k},$ then the canonical map $G(B, J^k) \to G(C_k, \bar{J}^k)$ is surjective, and therefore $\mathcal{G}_{k}$ can be identified with $G(C_{k}, \bar{J}^{k}).$ Now let $\{ v_1, \dots, v_{s_{k}} \}$ be a $K$-basis of $\bar{J}^{k} = J^{k}/ J^{k + 1}.$ Then an element $X \in GL_n (C_{k}, \bar{J}^{k})$ can be written in the form $X = I_n + X_1 v_1 + \cdots + X_{s_{k}} v_{s_{k}}$, for some $X_j \in M_n (K).$ Let $F$ be a regular function on $GL_n.$ Then
$$
F(X) = F(I_n) + (\nabla_{I_n} F \cdot X_1)v_1 + \cdots + (\nabla_{I_n} F \cdot X_k) v_k,
$$
where $\nabla_{I_n}$ is the gradient of $F$ evaluated at the identity $I_n.$
Thus, considering all the regular functions on $GL_n$ that vanish on $G$, we see that $X \in G(C_{k}, \bar{J}^{k})$ if and only if $X_i \in \mathfrak{g}$ for all $i = 1, \dots, s_{k},$ yielding an isomorphism
$$
\mathcal{G}_{k} \simeq G(C_{k}, \bar{J}^{k}) \simeq \mathfrak{g} \times \cdots \times \mathfrak{g}.
$$
Finally, it is clear that the conjugation action of $G(K)$ on $\mathcal{G}_{k}$ can be identified with the direct sum of copies of the adjoint representation, which completes the proof.
\end{proof}

Now we prove

\begin{prop}\label{P:R-2} In each of the following situations
\vskip1mm

\noindent {\rm (i)} $H^{\circ}$ is reductive (hence semisimple);

\vskip1mm

\noindent {\rm (ii)} char $K = 0$ and $H^{\circ}$ satisfies condition {\rm (Z)} introduced in \S 5

\vskip1mm
\noindent there exists a morphism of algebraic groups $\sigma \colon G(A^{\circ}) \to H^{\circ}$ such that $\tilde{\sigma} = \sigma \circ \pi_{A^{\circ}},$ i.e. the diagram {\rm (\ref{D:FI-1})} commutes.
\end{prop}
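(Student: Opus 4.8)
The plan is to reduce the statement to showing that $\tilde{\sigma} \colon \tilde{G}(A^{\circ}) \to H^{\circ}$ vanishes on $K_2(\Phi, A^{\circ}) = \ker \pi_{A^{\circ}}$. Granting this, since $A^{\circ}$ is semilocal by Lemma \ref{L:AR-1}, Lemma \ref{L:FI-100} gives $G(A^{\circ}) = G(A^{\circ})^{+}$, so $\pi_{A^{\circ}}$ is surjective and $\tilde{\sigma}$ descends to a unique group homomorphism $\sigma \colon G(A^{\circ}) \to H^{\circ}$ with $\sigma \circ \pi_{A^{\circ}} = \tilde{\sigma}$; this is exactly the commutativity of (\ref{D:FI-1}). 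It then remains to see that $\sigma$ is a morphism of algebraic groups, which will come from the big-cell argument already set up in this section.

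To show $\tilde{\sigma}$ kills $K_2(\Phi, A^{\circ})$, I would first note that it maps $K_2(\Phi, A^{\circ})$ into $Z(H^{\circ})$: the identity $\nu \circ \tilde{\sigma} = \bar{\sigma} \circ \pi_{A^{\circ}}$ (used to define $\bar{\sigma}$ just before Proposition \ref{P:FI-5}) shows that every $z \in \ker \pi_{A^{\circ}}$ satisfies $\nu(\tilde{\sigma}(z)) = e$, i.e. $\tilde{\sigma}(z) \in \ker \nu = Z(H^{\circ})$. Next, $Z(H^{\circ})$ is finite in both cases under consideration: by Corollary \ref{C:FI-5} in case (i), and by Proposition \ref{P:FI-2}(i) in case (ii). So $\tilde{\sigma}$ restricts to a homomorphism from $K_2(\Phi, A^{\circ})$ to a finite abelian group, and it is enough to prove that $K_2(\Phi, A^{\circ})$ is \emph{divisible}: a homomorphic image of a divisible group is divisible, and the only finite divisible abelian group is trivial.

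The divisibility of $K_2(\Phi, A^{\circ})$ is where conditions (i) and (ii) are genuinely used. By Corollary \ref{C:StG-1}, $K_2(\Phi, A^{\circ})$ is abelian (being central in $\tilde{G}(A^{\circ})$) and generated by the Steinberg symbols $\{u, v\}_{\alpha}$ with $u, v \in (A^{\circ})^{\times}$; by the standard bimultiplicativity of Steinberg symbols, $\{u^{m}, v\}_{\alpha} = \{u, v\}_{\alpha}^{m}$, so it suffices that every element of $(A^{\circ})^{\times}$ be an $m$-th power for every $m \geq 1$ --- then every generator, hence the whole abelian group they generate, is divisible. In case (i), $H^{\circ}$ reductive forces $J(A^{\circ}) = \{0\}$ by Lemma \ref{L:FI-1}, so $A^{\circ}$ is a connected finite-dimensional semisimple commutative $K$-algebra, i.e. $A^{\circ} \simeq K^{r}$, and $(A^{\circ})^{\times} = (K^{\times})^{r}$ is divisible because $K$ is algebraically closed. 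In case (ii), $A^{\circ}$ is a finite product of local $K$-algebras of the form $K \oplus J_{i}$ with $J_{i}$ nilpotent (Proposition \ref{P:AR-20} together with the Wedderburn--Malcev decomposition), and since char $K = 0$ the factors $K^{\times}$ are divisible while each $1 + J_{i}$ is a uniquely divisible unipotent group; hence $(A^{\circ})^{\times}$ is again divisible. This produces the abstract homomorphism $\sigma$.

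It remains to check regularity. Here $A^{\circ} = B$ is a finite-dimensional $K$-algebra (Proposition \ref{P:FI-5}), so $G(A^{\circ}) = G(B)$ is a connected algebraic $K$-group. From $\sigma \circ \pi_{A^{\circ}} = \tilde{\sigma}$ and Proposition \ref{P:StG-1} we obtain $\sigma(e_{\alpha}(a)) = \psi_{\alpha}(a)$ for $a \in A^{\circ}$ and $\sigma(h_{\alpha}(u)) = H_{\alpha}(u)$ for $u \in (A^{\circ})^{\times}$, which is precisely the input of Lemma \ref{L:R-2}: on the big cell $\Omega(A^{\circ})$, $\sigma$ coincides with $\kappa \circ \theta_{A^{\circ}}^{-1}$ and is therefore regular. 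Since $\Omega(A^{\circ})$ is a nonempty, hence dense, open subset of the connected group $G(A^{\circ})$, Lemma \ref{L:R-3} shows that $\sigma$ is a morphism of algebraic groups. I expect the main obstacle to lie in the middle two paragraphs --- locating the obstruction to descending $\tilde{\sigma}$ in $\Hom(K_2(\Phi, A^{\circ}), Z(H^{\circ}))$ and annihilating it --- since this is the one place where hypotheses (i) and (ii) enter, through the finiteness of $Z(H^{\circ})$ and the divisibility of $(A^{\circ})^{\times}$.
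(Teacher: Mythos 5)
Your overall strategy --- reduce everything to showing that $\tilde{\sigma}$ kills $K_2(\Phi,A^{\circ})$, observe that $\tilde{\sigma}(K_2(\Phi,A^{\circ}))\subset Z(H^{\circ})$ is finite, and then annihilate it by a divisibility argument --- is genuinely different from the paper's, and most of it is sound: the descent through $\pi_{A^{\circ}}$ via Lemma \ref{L:FI-100}, the finiteness of $Z(H^{\circ})$ in both cases, the divisibility of $(A^{\circ})^{\times}$, and the big-cell regularity argument via Lemmas \ref{L:R-2} and \ref{L:R-3} are all fine. The gap is the appeal to ``standard bimultiplicativity'' of the long-root Steinberg symbol. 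The identity $\{u^{m},v\}_{\alpha}=\{u,v\}_{\alpha}^{m}$ is not a formal consequence of centrality: it holds when the long root $\alpha$ lies in a subsystem of type $A_2$ consisting of long roots, but for the symplectic types ($\Phi=C_n$, $n\geq 2$, including $B_2$) the long roots are pairwise orthogonal, and the long-root symbol is the \emph{symplectic} Steinberg symbol, which satisfies only the weaker cocycle relations of Matsumoto and is not bimultiplicative over a general ring (over $\R$, for instance, $\{-1,-1\}$ has infinite order in the symplectic $K_2$, whereas bimultiplicativity would force $\{-1,-1\}^{2}=1$). Since these types are permitted here, your divisibility claim for $K_2(\Phi,A^{\circ})$ is unproved as stated. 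It is likely repairable --- in the cases at hand every unit of $A^{\circ}$ is a square, and the symplectic symbol does become bimultiplicative on squares --- but this requires an actual computation with the symplectic relations (or an identification of the symplectic and Milnor $K_2$ over quadratically closed coefficients), not a blanket citation of bimultiplicativity.

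For comparison, the paper avoids all relations among Steinberg symbols at this stage. It first produces a \emph{regular} lift $\sigma$ of the morphism $\bar{\sigma}$ through the central isogeny $\nu\colon H^{\circ}\to\bar{H}$: in case (i) directly from the simple connectedness of $G(A^{\circ})=G(K)\times\cdots\times G(K)$ via (\cite{BT1}, Proposition 2.24), and in case (ii) by splitting along the Levi decompositions $G(B)=G(B,J)\rtimes G(\bar{B})$ and $H^{\circ}=U\rtimes S$ from Proposition \ref{P:R-3}, using that condition (Z) makes $\nu\vert_{U}$ an isomorphism. It then observes that $\chi(g)=\tilde{\sigma}(g)^{-1}(\sigma\circ\pi_{A^{\circ}})(g)$ is a homomorphism into $Z(H^{\circ})$, hence trivial because $\tilde{G}(A^{\circ})$ is perfect; this yields $\sigma\circ\pi_{A^{\circ}}=\tilde{\sigma}$ and gives regularity for free. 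If you can close the bimultiplicativity gap, your argument has the merit of not invoking the lifting theorem for isogenies or the Levi-theoretic analysis; as written, however, the key step is missing precisely for the symplectic root systems.
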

\begin{proof}
By Proposition \ref{P:FI-5}, in both cases, $B:= A^{\circ}$ is a finite-dimensional $K$-algebra, so $G(A^{\circ}) = G(B)$ has a natural structure of a connected algebraic $K$-group. Furthermore, in either case, the center $Z(H^{\circ})$ is finite (see Corollary \ref{C:FI-5} and Proposition \ref{P:FI-2}), so the canonical morphism $\nu \colon H^{\circ} \to \bar{H}$ is a central isogeny.

According to Proposition \ref{P:R-1}, $\bar{\sigma} \colon G(A^{\circ}) \to \bar{H}$ is a morphism of algebraic groups, which, by our construction, satisfies $\nu \circ \tilde{\sigma} = \bar{\sigma} \circ \pi_{A^{\circ}}$, in the notations introduced earlier. As we already noted in the proof of Proposition \ref{P:FI-5}, in case (i), we have $J(A^{\circ}) = \{ 0 \}$, so $A^{\circ} \simeq K \times \cdots \times K$ (Proposition \ref{P:AR-20}), and therefore $G(A^{\circ}) = G(K) \times \cdots \times G(K)$ is a semisimple simply connected algebraic group. Then, according to (\cite{BT1}, Proposition 2.24), there exists a morphism of algebraic groups $\sigma \colon G(A^{\circ}) \to H$ such that
$$
\nu \circ \sigma = \bar{\sigma}.
$$
We will next show that such a $\sigma$ also exists in case (ii). Pick a semisimple subalgebra $\bar{B} \subset B:=A^{\circ}$ such that $B = \bar{B} \oplus J$; then by
Proposition \ref{P:R-3},
$$
G(B) = G(B, J) \rtimes G(\bar{B})
$$
is a Levi decomposition of $G(B) = G(A^{\circ})$. Also, since $\bar{B} \simeq K \times \cdots \times K$, the group $G(\bar{B}) = G(K) \times \cdots \times G(K)$ is semisimple and simply connected. Set
$$
\bar{U} = \bar{\sigma} (G(A^{\circ}, J)) \ \ \  {\rm and} \ \ \  \bar{S} = \bar{\sigma} (G(\bar{B})).
$$
Then
$$
\bar{H} = \bar{U} \rtimes \bar{S}
$$
is a Levi decomposition of $\bar{H}$. Furthermore, setting $S = (\nu^{-1} (\bar{S}))^{\circ}$, we have that
$$
H^{\circ} = U \rtimes S,
$$
where $U$ is the unipotent radical of $H^{\circ}$, is a Levi decomposition of $H^{\circ}$. According to Proposition \ref{P:FI-5}, $Z(H^{\circ}) \subset S$, so that $\bar{S} = S/ Z(H^{\circ})$ and the restriction $\nu_U = \nu \vert_U \colon U \to \bar{U}$ is an isomorphism.
Since $G(\bar{B})$ is simply connected, there exists a morphism of algebraic groups $\sigma_S \colon G(\bar{B}) \to S$ such that
$$
\nu \circ \sigma_S = \bar{\sigma} \vert_{G(\bar{B})}.$$
Now define $\sigma_U \colon G(A^{\circ}, J) \to U$ to be $\nu_U^{-1} \circ (\bar{\sigma} \vert_{G(A^{\circ}, J)}).$ Then $$
\sigma = (\sigma_U, \sigma_S) \colon G(A^{\circ}) = G(A^{\circ}, J) \rtimes G(\bar{B}) \to H^{\circ}
$$
is again a morphism of algebraic groups satisfying $\nu \circ \sigma = \bar{\sigma}.$

Thus, for the morphisms $\sigma$ constructed in both cases, it
now follows from Proposition \ref{P:FI-5} that $\nu \circ \sigma \circ \pi_{A^{\circ}} = \nu \circ \tilde{\sigma}.$ Hence $\chi \colon \tilde{G}(A^{\circ}) \to H^{\circ}$ defined by
$$
\chi (g) = \tilde{\sigma}(g)^{-1} \cdot (\sigma \circ \pi_{A^{\circ}}) (g)
$$
has values in $Z(H^{\circ}).$ This, in conjunction with the fact that $\tilde{\sigma}$ and $\sigma \circ \pi_{A^{\circ}}$ are group homomorphisms, implies that $\chi$ is also a group homomorphism. However, since $\tilde{G}(A^{\circ})$ coincides with its commutator (\cite{St1}, Corollary 4.4), $\chi$ must be trivial, and therefore
$$
\sigma \circ \pi_{A^{\circ}} = \tilde{\sigma},
$$
as required.

\end{proof}

Thus, combining the results of Propositions \ref{P:FI-3}, \ref{P:FI-5}, \ref{P:R-1}, and \ref{P:R-2}, we obtain the following
\vskip2mm

\begin{thm}\label{T:R-1}
Let $\Phi$ be a reduced irreducible root system of rank $\geq 2$, $R$ a commutative ring such that $(\Phi, R)$ is a nice pair, and $K$ an algebraically closed field. Assume that $R$ is noetherian if char $K > 0.$ Furthermore let $G$ be the universal Chevalley-Demazure group scheme of type $\Phi$ and let $\rho \colon G(R)^+ \to GL_n (K)$ be a finite-dimensional linear representation of the elementary subgroup $G(R)^+ \subset G(R)$ over $K.$ Set $H = \overline{\rho (G(R)^+)}$ (Zariski closure), and let $H^{\circ}$ be the connected component of the identity of $H$. Then in each of the following situations
\vskip1mm

\noindent {\rm (1)} $H^{\circ}$ is reductive;

\vskip1mm

\noindent {\rm (2)} char $K = 0$ and $R$ is semilocal;

\vskip1mm

\noindent {\rm (3)} char $K = 0$ and $H^{\circ}$ satisfies condition {\rm (Z)}

\vskip1mm

\noindent there exists a commutative finite-dimensional $K$-algebra $B$, a ring homomorphism $f \colon R \to B$ with Zariski-dense image and a morphism $\sigma \colon G(B) \to H$ of algebraic $K$-groups such that for a suitable subgroup $\Gamma \subset G(R)^+$ of finite index we have
$$
\rho \vert_{\Gamma} = (\sigma \circ F) \vert_{\Gamma},
$$
where $F \colon G(R)^+ \to G(B)^+$ is the group homomorphism induced by $f$.

\end{thm}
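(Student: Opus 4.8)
The statement is the capstone that collects the results of Sections \ref{S:ARR}--\ref{S:R}, so the plan is to fit those pieces together and check that the output matches the required format. First I would apply Theorem \ref{T:ARR-1} to attach to $\rho$ a commutative algebraic ring $A$ with identity, a ring homomorphism $f \colon R \to A$ with Zariski-dense image, and injective regular maps $\psi_\alpha \colon A \to H$ satisfying $\rho(e_\alpha(t)) = \psi_\alpha(f(t))$. Next, by Proposition \ref{P:StG-1}, $\rho$ lifts to a homomorphism $\tilde\tau \colon \tilde G(A) \to H$ of the Steinberg group with $\tilde x_\alpha(a) \mapsto \psi_\alpha(a)$, so that $\tilde\tau \circ \tilde F = \rho \circ \pi_R$. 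Since $R$ is noetherian when char $K > 0$, Lemma \ref{L:AR-3} gives condition (FG), and Propositions \ref{P:AR-4}, \ref{P:AR-2} and Corollary \ref{C:StG-2} then yield $A = A^\circ \oplus C$ with $C$ finite and $\tilde G(A) = \tilde G(A^\circ) \times P$ with $P$ finite. Writing $\tilde\sigma = \tilde\tau|_{\tilde G(A^\circ)}$, $\tilde\Gamma = \tilde F^{-1}(\tilde G(A^\circ))$ and $\Gamma = \pi_R(\tilde\Gamma)$ --- a finite-index subgroup of $G(R)^+$ --- and setting $B := A^\circ$ and $t \colon R \to B$ equal to the composition of $f$ with the projection $A \to A^\circ$, one gets the commutative diagram (\ref{D:FI-1}) of solid arrows; here $t(R)$ is Zariski-dense in $B$ because $f(R)$ is dense in $A$ and $A \to A^\circ$ is a morphism of varieties with image $A^\circ$. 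The homomorphism named ``$f$'' in the conclusion of the theorem will be this $t$.

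I would then treat the three cases, the common point being that in each of them $B = A^\circ$ is a commutative finite-dimensional $K$-algebra: for char $K = 0$ this is Proposition \ref{P:AR-2}, and when $H^\circ$ is reductive it follows from Proposition \ref{P:FI-5} (using Lemma \ref{L:FI-1} to force $J(A^\circ) = \{0\}$ together with Proposition \ref{P:AR-20}). Hence $G(B)$ acquires a natural structure of connected algebraic $K$-group via restriction of scalars, and $G(B) = G(B)^+$ by Matsumoto's Lemma \ref{L:FI-100} since $B$ is semilocal. In case (2) Proposition \ref{P:FI-3} produces an a priori abstract group homomorphism $\sigma \colon G(B) \to H^\circ$ making (\ref{D:FI-1}) commute, and Proposition \ref{P:R-1} upgrades it to a morphism of algebraic groups; in cases (1) and (3), Proposition \ref{P:R-2} directly produces a morphism of algebraic groups $\sigma \colon G(B) \to H^\circ$ with $\sigma \circ \pi_{A^\circ} = \tilde\sigma$, i.e.\ making (\ref{D:FI-1}) commute. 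In every case, composing $\sigma$ with the closed immersion $H^\circ \hookrightarrow H$ and chasing (\ref{D:FI-1}) gives $\sigma \circ F \circ \pi_R = \sigma \circ \pi_{A^\circ} \circ \tilde F = \tilde\sigma \circ \tilde F = \rho \circ \pi_R$ on $\tilde\Gamma$, so since $\pi_R(\tilde\Gamma) = \Gamma$ we obtain $\rho|_\Gamma = (\sigma \circ F)|_\Gamma$, with $F \colon G(R)^+ \to G(B)^+$ induced by $f = t$, which is the assertion.

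The real content lives in the propositions being quoted, so I would flag where the weight sits and what the main obstacle is. The hardest point is making $\tilde\sigma$ descend from $\tilde G(A^\circ)$ to $G(A^\circ)$, i.e.\ showing it kills $K_2(\Phi, A^\circ) = \ker \pi_{A^\circ}$. In the semilocal case (Proposition \ref{P:FI-3}) this uses Stein's description (Theorem \ref{T:StG-1}, via Corollary \ref{C:StG-1}) of $K_2(\Phi, A^\circ)$ as generated by Steinberg symbols $\{u, v\}_\alpha$ for one fixed long root, the regularity of $(u, v) \mapsto \tilde\tau(\{u, v\}_\alpha)$ coming from Proposition \ref{P:AR-1}, and the vanishing of this regular map on the Zariski-dense subset arising from $f(R^\times)$ --- where Corollary \ref{C:AR-2} is what guarantees that $t(R^\times)$ is dense in $(A^\circ)^\times$. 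In the reductive and (Z) cases the kernel need not die outright; instead one passes to $\bar H = H^\circ/Z(H^\circ)$ (with $Z(H^\circ)$ finite by Corollary \ref{C:FI-5}, respectively Proposition \ref{P:FI-2}), observes that $\tilde\sigma$ descends modulo the center to a morphism $\bar\sigma \colon G(A^\circ) \to \bar H$, and then lifts $\bar\sigma$ back along the central isogeny $\nu \colon H^\circ \to \bar H$ using that $G(\bar B)$ --- or $G(A^\circ)$ itself when $J(A^\circ) = \{0\}$ --- is semisimple simply connected (\cite{BT1}, Proposition 2.24), the lift being pinned down uniquely because $\tilde G(A^\circ)$ is perfect (\cite{St1}, Corollary 4.4). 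The other delicate ingredient, rationality (Proposition \ref{P:R-1}), reduces via Lemma \ref{L:R-2} to regularity of $\sigma$ on the ``big cell'' $\Omega(B)$ --- read off from the explicit product of the regular maps $\psi_\alpha$ and $H_\alpha$ --- followed by the elementary extension Lemma \ref{L:R-3}. Everything else is bookkeeping.
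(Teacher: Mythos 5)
Your proposal is correct and follows essentially the same route as the paper: the theorem is indeed proved by combining Theorem \ref{T:ARR-1}, Proposition \ref{P:StG-1}, the decomposition $\tilde{G}(A) = \tilde{G}(A^{\circ}) \times P$, and Propositions \ref{P:FI-3}, \ref{P:FI-5}, \ref{P:R-1}, and \ref{P:R-2}, exactly as you describe. Your identification of where the real difficulty sits (killing $K_2(\Phi, A^{\circ})$, respectively lifting $\bar{\sigma}$ along the central isogeny, and the big-cell rationality argument) matches the paper's own emphasis.
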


\vskip1mm

Recall that we showed in Proposition \ref{P:FI-2} that if char $K = 0$ and the unipotent radical $U$ of $H^{\circ}$ is commutative, then $H^{\circ}$ satisfies (Z). Hence, Theorem \ref{T:R-1} yields all of the assertions of the Main Theorem.

\vskip2mm

\noindent {\bf Example 6.8.} If $H^{\circ}$ is reductive, then it follows from Lemma \ref{L:FI-1} and our construction that $B$ can be chosen to have trivial Jacobson radical, and therefore $B \simeq K \times \cdots \times K$ ($r$ copies). Then the homomorphism $f \colon R \to B$ in Theorem \ref{T:R-1} is of the form $f = (f_1, \dots, f_m)$, where each component is a homomorphism
$f_i \colon R \to K.$ In particular, if $R = \Z [x_1, \dots, x_k]$, then each $f_i$ is just a specialization map. So, in this case, we obtain from Theorem \ref{T:R-1} that
any representation $\rho \colon G(R)^+ \to GL_n (K)$ coincides on a subgroup of finite index $\Gamma \subset G(R)^+$ with $\sigma \circ F$, where $F = (F_1, \dots, F_r)$ and each component $F_i \colon G(R)^+ \to G(K)$ is induced by a specialization homomorphism, and $\sigma \colon G(K) \times \cdots \times G(K) \to H$ is a morphism of algebraic $K$-groups. Thus, Theorem \ref{T:R-1} generalizes the result of Shenfeld \cite{Sh} which treats the case $G= SL_n,$ $R = \Z[x_1, \dots, x_k]$, and $K= \C,$ using the centrality of the congruence kernel of $G = SL_n (R)$ established in \cite{KN} and mimicking the argument of Bass-Milnor-Serre \cite{BMS}.

\vskip2mm

\noindent {\bf Example 6.9.} Now assume that the unipotent radical $U$ of $H^{\circ}$ is commutative and that char~$K = 0.$ Then it follows from Lemma \ref{L:FI-1} and our construction that one can choose $B$ so that its Jacobson radical $J = J(B)$ satisfies $J^2 = \{ 0 \}.$ Moreover, it follows from Proposition \ref{P:R-3}(ii) that we can write
$$
G(B) = G(B_1) \times \cdots \times G(B_r),
$$
where each $B_i$ is a finite dimensional local $K$-algebra of the form $B_i = K \oplus J_i$ with $J_i^2 = \{ 0 \}.$ Hence it is enough to analyze the case where $B = K \oplus J$ with $J^2 = \{ 0 \}.$
So now choose a $K$-basis $\{ v_1, \dots, v_d \}$ of $J$. Then a homomorphism $f \colon R \to B$ can be written in the form
$$
f(r) = (f_0 (r), f_1(r) v_1 + \cdots + f_d (r) v_d),
$$
where $f_0 \colon R \to K$ is a ring homomorphism and the $f_i$'s, for $i \geq 1$, satisfy $$
f_i (r_1 r_2) = f_0 (r_1) f_i (r_2) + f_i (r_1) f_0 (r_2).
$$
Thus, each $f_i,$ $i \geq 1,$ is a derivation (with respect to $f_0$), and we recover, in a slightly different form, the result of L.~Lifschitz and A.~Rapinchuk \cite{LR}, which was established when $R = k$ is a field of characteristic zero (as we observed earlier, in this case $G(R)$ does not have proper noncentral normal subgroups, hence $\Gamma = G(R)$).

\vskip2mm

\noindent {\bf Remark 6.10.} Borel and Tits \cite{BT} consider abstract homomorphisms into groups of points over not necessarily algebraically closed fields. It appears that our results can also be generalized to representations over non-algebraically closed fields. However, this will require an analysis of algebraic rings over non-algebraically closed fields and will be given elsewhere, along with the verification of condition (Z) in some new cases.

\vskip5mm

\bibliographystyle{amsplain}

\end{document}